\documentclass[11pt]{amsart}
\textheight 615pt
\textwidth 360pt

\usepackage[latin1]{inputenc} 
\usepackage{listings, tcolorbox}

\usepackage{amssymb, amsmath, amsthm}
\usepackage[colorlinks=true,linkcolor=blue,citecolor=red]{hyperref}
\usepackage{color}
\usepackage{graphics}
\usepackage{subfigure}
\usepackage{graphicx}  
\usepackage{float}  
\usepackage{epsfig}
\usepackage{amssymb}
\usepackage{epstopdf}
\usepackage{enumerate}
\usepackage{appendix}
\usepackage{amscd}
\usepackage{mathrsfs}
\usepackage{tikz}
\usepackage{bm}
\usepackage[T1]{fontenc}

\numberwithin{equation}{section}
\newtheorem{theorem}{Theorem}[section]
\newtheorem{corollary}[theorem]{Corollary}
\newtheorem{lemma}[theorem]{Lemma}
\newtheorem{proposition}[theorem]{Proposition}
\newtheorem{assumption}[theorem]{Assumption}
\theoremstyle{definition}

\newtheorem{remark}[theorem]{Remark}
\newtheorem{RHP}[theorem]{RH Problem}

\setlength{\topmargin}{0.24in} \setlength{\textheight}{7.6in}
\setlength{\textwidth}{6.6in} \setlength{\oddsidemargin}{-0.20in}
\setlength{\evensidemargin}{-0.20in} \setlength{\unitlength}{0.8cm}


\newcommand{\R}{\mathbb{R}}
\newcommand{\C}{\mathbb{C}}

\subjclass[2000]{35Q55, 35Q15, 35C20}


\begin{document}
	
	\title[On Cauchy problem to the spin-1 Gross-Pitaevskii equation]{On soliton resolution to Cauchy problem of the spin-1 Gross-Pitaevskii equation}
	

	\author[S. F. Tian]{Shou-Fu Tian$^{*}$}

    \author[J. F. Tong]{Jia-Fu Tong$^{*,\dag}$}
	
	\address{Shou-Fu Tian  (Corresponding author) \newline
		School of Mathematics, China University of Mining and Technology, Xuzhou 221116, China}
\email{sftian@cumt.edu.cn, shoufu2006@126.com }
\address{Jia-Fu Tong  (Corresponding author) \newline
		School of Mathematics, China University of Mining and Technology, Xuzhou 221116, China}
\email{jftong@cumt.edu.cn}	
	
	\thanks{$^{*}$Corresponding authors(sftian@cumt.edu.cn, shoufu2006@126.com (S.F. Tian) and jftong@cumt.edu.cn (J.F. Tong)).\\
\hspace*{3ex}$^\dag$This author is contributed equally as the first author.}
	
\begin{abstract}
		{We investigate the Cauchy problem for the spin-1 Gross-Pitaevskii(GP) equation, which is a model instrumental in characterizing the soliton dynamics within spinor
Bose-Einstein condensates.  Recently, Geng $etal.$ (Commun. Math. Phys. 382, 585-611 (2021)) reported the long-time asymptotic result with error $\mathcal{O}(\frac{\log t}t)$ for the spin-1 GP equation that only exists in the continuous spectrum. The main purpose of our work is to further generalize and improve Geng's work. Compared with the previous work, our asymptotic error accuracy has been improved from $\mathcal{O}(\frac{\log t}t)$ to $\mathcal{O}(t^{-3/4})$. More importantly, by establishing two matrix valued functions, we obtained effective asymptotic errors and successfully constructed asymptotic analysis of the spin-1 GP equation based on the characteristics of the spectral problem, including two cases: (i)coexistence of discrete and continuous spectrum;
(ii)only continuous spectrum which considered by Geng's work with error $\mathcal{O}(\frac{\log t}t)$.
For the case (i),
the corresponding asymptotic approximations can be characterized with an $N$-soliton as well as an interaction term between soliton solutions and the dispersion term with
diverse residual error order $\mathcal{O}(t^{-3/4})$. For the case (ii), the corresponding asymptotic approximations can be characterized with the leading term on the
continuous spectrum and the residual error order $\mathcal{O}(t^{-3/4})$. Finally, our results confirm the soliton resolution conjecture for the spin-1 GP equation.}
\end{abstract}

	\maketitle
	\tableofcontents
	\section{Introduction}
	Nonlinear partial differential equations have been instrumental in describing the nonlinear wave dynamics across various fields, including Bose-Einstein condensates,
optical fiber communications, fluid mechanics, and plasma physics. It has been known that at the very low but critical transition temperature, a large fraction of the atoms
would condense and occupy into the same the lowest energy state \cite{boo-3,boo-2}. This novel state, proposed by Bose and Einstein in 1925, is named as
Bose-Einstein condensate (BEC). It was experimentally realized by Anderson, Ensher, Matthews, Wieman and Cornell in 1995 \cite{boo-4}. The BEC applications have been seen in
the superfluidity in the liquid helium and superconductivity in the metals \cite{boo-7,boo-5,boo-6}. The BEC can be described by means of an effective mean-field theory and
the relevant model is a classical nonlinear evolution equation, the GP equation \cite{boo-8,boo-9}. BEC can have single component and multi-component. In the single-component
BEC, the GP equation, also known as the nonlinear Schr{\"o}dinger (NLS) equation in one dimension, is the relevant dynamic model. Multi-component BECs have garnered considerable
interest due to their complex and varied dynamical behaviors \cite{boo-10,boo-11,boo-12}. Among the fully integrable models for three-component BECs, the spin-1 GP equation
\begin{equation}\label{GP}
    \begin{cases}iq_{1t}+q_{1xx}+2(|q_1|^2+2|q_0|^2)q_1+2q_0^2\bar{q}_{-1}=0,\\iq_{0t}+q_{0xx}+2(|q_1|^2+2|q_0|^2+|q_{-1}|^2)q_0+2q_1q_{-1}\bar{q}_0=0,\\
    iq_{-1t}+q_{-1xx}+2(2|q_0|^2+|q_{-1}|^2)q_{-1}+2q_0^2\bar{q}_1=0,\end{cases}\end{equation}
    where $q_{1}(x,t)$, $q_{0}(x,t)$ and $q_{-1}(x,t)$  are three potentials functions, stands out for its ability to describe soliton dynamics within spinor BECs \cite{boo-13}. In 2004, Ieda $etal$. studied
    the $N$-soliton solutions of the spin-1 GP equation \cite{boo-13}. In addition, inverse scattering transform and Darboux transform are also used to study soliton
    solutions of the spin-1 GP equation \cite{boo-27,boo-14,boo-15}. In 2017, Yan studied the half-line for the initial-boundary problems of the spin-1 GP equation \cite{boo-17}.
    In 2018, Prinari $etal$. studied soliton solutions of the spin-1 GP equation under non-zero boundary conditions \cite{boo-18}. In 2019, Yan studied finite interval
    for the initial-boundary problems of the spin-1 GP equation \cite{boo-19}. Recently, Geng $etal$. extend the $\text{Deift-Zhou's}$ nonlinear steepest descent method to
    study the long-time asymptotics for the Cauchy problem of the spin-1 GP equation under only continuous spectrum \cite{boo-1}.

    We revisit the Cauchy problem \eqref{GP} with the initial values
    \begin{equation}\nonumber
    q_0^0(x)=q_0(x,0), q_1^0(x)=q_1(x,0), q_{-1}^0(x)=q_{-1}(x,0).
    \end{equation}
    $q_0^0(x)$, $q_1^0(x)$ and $q_{-1}^0(x)$ lie in the Schwartz space
    $\mathscr{S}(\mathbb{R}) = \{f(x) \in C^{\infty}(\mathbb{R}) : \sup_{x\in\mathbb{R}}|x^{\alpha}\partial^{\beta}f(x)| < \infty,\forall\alpha,\beta \in \mathbb{N}\}$
    considered by Geng $etal$. in \cite{boo-1}, in which they derived the leading order approximation to the solution of the Cauchy problem using $\text{Deift-Zhou's}$ nonlinear
    steepest descent method
    \begin{equation}
    (q_1(x,t),q_0(x,t),q_{-1}(x,t))=\frac{\sqrt{\pi}(\delta^0)^2e^{-\frac{\pi\nu}2}e^{\frac{-3\pi i}4}}{\sqrt{t}\Gamma(-i\nu)\det\gamma^\dagger(k_0)}(\bar{\gamma}_{22}(k_0),
    -\bar{\gamma}_{21}(k_0),\bar{\gamma}_{11}(k_0))+\mathcal{O}\left(\frac{\log t}t\right),
\end{equation}
    this result provides the long-time asymptotics without solitons for Cauchy problem of the spin-1 GP equation.

    Drawing on the seminal contributions of Manakov to the study of nonlinear evolution equations asymptotic properties over extended periods \cite{boo-31}, later scholars continuously follow his step \cite{boo-2000,boo-3000,boo-4000}. The $\text{Deift-Zhou's}$ nonlinear steepest descent method was first proposed by Deift and Zhou to analyze the long-time asymptotic behavior of solutions of modified
    Korteweg-de Vries (mKdV) equations In 1993 \cite{boo-32}. Later, this method was widely applied to Camassa-Holm equation \cite{boo-33}, focusing NLS equation \cite{boo-34},
     KdV equation \cite{boo-35}, Toda equation \cite{boo-36}, Sine-Gordon equation \cite{boo-37} and so on \cite{boo-40,boo-38,boo-39}. Recently, in 2006, McLaughlin and
     Miller have further proposed the $\bar{\partial}$ steepest descent method, which combines the steepest descent method to analyze the asymptotics of orthogonal polynomials
      \cite{boo-42,boo-41}. Compared with the classical nonlinear steepest descent method, the advantage lies in improving the error accuracy on the one hand, and avoiding
      the complex norm estimation on the other hand. This method has been employed to address the asymptotic analysis of the initial value problem for NLS equations without
      soliton, ever since its introduction in \cite{boo-42}. Later on, the soliton resolution conjecture for the focusing NLS equation with rapidly decaying initial values
      was tackled using the steepest descent approach within the framework of integrable systems, as detailed in \cite{boo-23}. The defocused NLS equation with finite density
      initial values also yields similar results and \cite{boo-44} gave the asymptotic stability of the $N$-soliton solution. Furthermore, this method is also applied to
      various other nonlinear integrable systems, encompassing the modified CH equation \cite{boo-45}, the short-pulse equation \cite{boo-46}, the Fokas-Lenells equation
      \cite{boo-47}, the Wadati-Konno-Ichikawa equation \cite{boo-48,boo-49}, the defocusing nonlinear Schr{\"o}dinger equation with a nonzero background \cite{boo-70} and
      so on. However, most work was concentrated on integrable systems associated with $2\times2$ matrix spectral problems. When considering the nonlinear integrable systems
      associated with higher-order matrix scattering problems, the corresponding long-time asymptotic analysis of solutions becomes more difficult. On the one hand, the main
      part of the spectrum problem may have more than two different eigenvalues, which requires the development of Fredholm integral equations to express the related higher
      order matrix RH problem, see \cite{boo-50,boo-51,boo-52,boo-53,boo-54,boo-55}. On the other hand, it is necessary to employ some tricks to decompose the jump matrix
      into upper and lower triangles, for example, to overcome this problem by introducing a $2\times2$ \cite{boo-56,boo-57}, $3\times3$ \cite{boo-58} matrix-valued function
       $\delta_j(k)$ defined in \eqref{k1}, and more, two $2\times2$ matrix-valued functions \cite{boo-1} that satisfy the RH problem of two different matrices. Next is how
       to deal with the inexact solvability of the function $\delta_j(k)$, solve a matrix RH problem, and solve the corresponding higher order matrix model RH problem.
       Therefore, it is very meaningful and challenging to study the long-term asymptotics of the solutions of integrable nonlinear evolutionary equations related to the
       spectrum problems of higher order matrices.

    In what follows, we will study the soliton resolution conjecture of the spin-1 GP equation \eqref{GP} associated with the $4\times4$ matrix spectruml problem.
    The concept of soliton resolution conjecture denotes the phenomenon where, as $|t|\to\infty $, the solution separates into a finite number of separated solitons and
    a radiative part. The parameters of these asymptotic solitons experience slight modulations due to interactions between solitons themselves and between solitons and
    the radiation. The interesting point is that soliton resolution conjecture is better understood in integrable systems, and the solution provided by RH problem is more
    accurate than that obtained by pure analytic techniques \cite{boo-300,boo-100,boo-200}. Our analysis comprehensively details the dispersive element, which encompasses
    two distinct parts: one originating from the continuous spectrum and the other stemming from the interplay between the discrete and continuous spectrum. This
    decomposition is a core features in nonlinear wave dynamics and has been the object of many theoretical and numerical studies. The realm of nonlinear dispersion equations
    remains a dynamic and burgeoning area of inquiry \cite{boo-23,boo-61,boo-60}. Although \cite{boo-1} gives the asymptotic behavior without solitons of the solution
    for the spin-1 GP equation from the classical nonlinear steepest descent method, in this work we will further consider the more rich asymptotic behavior of the spin-1 GP equation from the perspective of $\bar{\partial}$-generalization of the Deift-Zhou's steepest descent method and $\text{Deift-Zhou's}$ nonlinear steepest descent method,
    and verify the validity of the soliton resolution conjecture.

    \begin{remark}
    The long-time asymptotic behavior of the spin-1 GP equation under non-zero boundary conditions is also being prepared.
    \end{remark}

\textbf{Our paper is arranged as follows:}\\
    In Sect. \ref{s:2}, we quickly review some basic results, especially the construction of a basic RH formalism $M(k)$ related to the Cauchy problem for the spin-1 GP
     equation \eqref{GP}.

    In Sect. \ref{s:3}, we focus on the long-time asymptotic analysis for the spin-1 GP equation in the region $\xi\neq0$ with the following steps. First of all, we
     obtain a standard RH problem for $M^{(1)}(k)$ by categorizing the poles of the RH problem for $M(k)$ in Sect. \ref{s:3.1}. Then in Sect. \ref{s:3.2}, after a
      continuous extension of the jump matrix with the $\bar{\partial}$ steepest descent method, the RH problem for $M^{(1)}(k)$ is deformed into a hybrid
      $\bar{\partial}\text{-RH}$ problem for $M^{(2)}(k)$, which can be solved by decomposing it into a pure RH problem for $M_{RHP}(k)$ and a pure
      $\bar{\partial}$-problem for $M^{(3)}(k)$. The RH problem for $M_{RHP}(k)$ can be constructed by a solvable parabolic-cylinder model, and the residual error comes
       from a small-norm RH problem for $E(k)$ described in Sect. \ref{s:3.3}. In Sect. \ref{s:3.4}, we prove the existence of the solution $M^{(3)}(k)$ and estimate
        its size. In Sect. \ref{s:3.5} and Sect. \ref{s:3.6}, we get long-time asymptotic for the spin-l GP equation for $|k-k_0|\geq a$ and $|k-k_0|<a$.

    In Sect. \ref{s:4}, we investigate the asymptotics of the solution in the region $\xi=0$ using a similar way as Sect. \ref{s:3}.

   In Sect. \ref{s.5}, we summarize the above estimates and obtain the long-time asymptotic for the spin-l GP equation.
	
	\section{Inverse scattering transform}\label{s:2}
    \subsection{\it{Jost functions}}\label{s:2.1}

    In this section, we will focus on constructing the basic RH problem of the spin-1 GP equation \eqref{GP}.
    At the beginning of this section, we fix some notations used this work. If $I$ is an interval on the real line $\mathbb{R}$, and $X$ is a Banach space,
    then $C^0(I,X)$ denotes the space of continuous functions on $I$ taking values in $X$. It is equipped with the norm
    \begin{equation}\|f\|_{C^0(I,X)}=\sup_{x\in I}\|f(x)\|_X.\end{equation}
    If the entries $f_{1}$ and $f_{2}$ are in space $X$, then we call vector $\vec{f}=(f_1,f_2)^T$ is in space $X$ with
    $\parallel\vec{f}\parallel_X\triangleq\parallel f_1\parallel_X+\parallel f_2\parallel_X$. Similarly, if every entries of matrix $A$ are in space $X$, then we
     call $A$ is also in space X. For convenience, we introduce the notations. For any matrix function $A$, we define $|A|=\sqrt{\operatorname{tr}(A^\dagger A)}$
     and $\|A(\cdot)\|_p=\||A(\cdot)|\|_p$
    We introduce same normed spaces:\\
    \begin{enumerate}[(I)]
    \item$\text{ A weighted }L^p(\mathbb{R})\text{ space is specified by}$
      \begin{equation}L^{p,s}(\mathbb{R})=\{f(x)\in L^p(\mathbb{R})||x|^sf(x)\in L^p(\mathbb{R})\}\nonumber;
      \end{equation}
    \item$\text{ A Sobolev space is defined by}$
      \begin{equation}W^{k,p}(\mathbb{R})=\left\{f(x)\in L^p(\mathbb{R})|\left.\partial^jf(x)\in L^p(\mathbb{R})\mathrm{~for~}j=1,2,...,k\right\};\right.\nonumber
      \end{equation}
    \item$\text{ A weighted Sobolev space is defined by}$
      \begin{equation}H^{k,s}(\mathbb{R})=\left\{f(x)\in L^2(\mathbb{R})|(1+|x|^s)\partial^jf(x)\in L^2(\mathbb{R}),\text{ for }j=1,...,k\right\}\nonumber.
      \end{equation}
    \end{enumerate}
    And the norm of $f(x)\in L^p(\mathbb{R})$ and $g(x)\in L^{p,s}(\mathbb{R})$ are abbreviated to $\parallel f\parallel_p$,$\parallel g\parallel_{p,s}$ respectively.

    Throughout out of this work, we use the following notations: The complex conjugate of a complex number $k$ is denoted by $\bar{k}$. For a complex-valued matrix
    $A$, $\bar{A}$ denotes the element-wise complex conjugate, $A^{T}$ denotes the transpose, and $A^{\dagger}$ denotes the conjugate transpose. $4\times4$ matrix $A$
    is represented as four blocks:
    \begin{equation}\nonumber
    A=\begin{pmatrix}A_{11}&A_{12}&A_{13}&A_{14}\\A_{21}&A_{22}&A_{23}&A_{24}\\A_{31}&A_{32}&A_{33}&A_{34}\\A_{41}&A_{42}&A_{43}&A_{44}\end{pmatrix}=(A_1,A_2,A_3,A_4)
    =(A_\mathrm{L},A_\mathrm{R})=\begin{pmatrix}A_\mathrm{UL}&A_\mathrm{UR}\\A_\mathrm{DL}&A_\mathrm{DR}\end{pmatrix},
\end{equation}
    where $A_{ij}$ represents the $(i, j)$-entry, $A_{j}$ represents the $j$-th column, $A_{L}$ represents the first two columns, $A_{R}$ represents the last two columns,
    $A_{UL}$, $A_{UR}$, $A_{DL}$, $A_{DR}$ are $2\times2$ matrices. The notation $A(k)$, $k\in(D_{1},D_{2})$, means that $A_{L}$ and $A_{R}$ hold for $k\in D_1,D_2$,
    respectively. $I_{n}$ represents the $n\times n$ identity matrix, $0_{n}$ represents the $n\times n$ 0 matrix, and
    $\mathbb{C}^{+}=\{k\in\mathbb{C}:\operatorname{Im}k\geqslant0\}$, $\mathbb{C}^{-}=\{k\in\mathbb{C}:\operatorname{Im}k\leqslant0\}$. For a vector function
    $f(x,t;k)$, $f^{(n)}(x,t;k)=\partial_k^n f(x,t;k)$, $f^{(n)}(x,t;k_{0})=\partial_{k}^{n}f(x,t;k)|_{k=k_{0}}$.

    In this subsection, next we shall derive the basic RH problem from the Cauchy problem for the spin-1 GP equation \eqref{GP}. Let us consider a $4\times4$ matrix
    Lax pair of the spin-1 GP equation:
    \begin{equation}\label{Lax0}
    \begin{aligned}\psi_{x}=(-ik\sigma_4+U)\psi,\\\psi_{t}=(-2ik^2\sigma_4+V)\psi,\end{aligned}\end{equation}
    where $\psi$ is a matrix-valued function and $k$ is the spectral parameter, $\sigma_4=\sigma_3\otimes I_2$, $¡°\otimes¡±$ represents Kronecker product.
    \begin{equation}\nonumber
    U=\begin{pmatrix}0&0&q_1&q_0\\0&0&q_0&q_{-1}\\-\bar{q}_1&-\bar{q}_0&0&0\\-\bar{q}_0&-\bar{q}_{-1}&0&0\end{pmatrix},\end{equation}
    \begin{equation}\nonumber
    V=2kU+i\sigma_4(U_x-U^2).\end{equation}
    For convenience, the matrix U is written as the block form
    \begin{equation}\nonumber
    U=\begin{pmatrix}0&q\\-q^\dagger&0\end{pmatrix},\end{equation}
    where
    \begin{equation}\nonumber
    q=\begin{pmatrix}q_{1}&q_{0}\\q_{0}&q_{-1}\end{pmatrix}.\end{equation}
    Let $\mu=\psi e^{ik\sigma_4x+2ik^2\sigma_4t}$, where $e^{\sigma_{4}}=\mathrm{diag}(e,e,e^{-1},e^{-1})$. Then we get
    \begin{equation}\label{Lax}
    \begin{aligned}\mu_{x}=-ik[\sigma_4,\mu]+U\mu,\\\mu_{t}=-2ik^2[\sigma_4,\mu]+V\mu,\end{aligned}\end{equation}
    where $[\sigma_4,\mu]=\sigma_4\mu-\mu\sigma_4$.

    Then we have the Volterra integral equation about the matrix Jost solution $\mu_\pm $ of equation \eqref{Lax}.
    \begin{equation}\label{Vol}
    \mu_\pm(k;x,t)=I_{4\times4}+\int_{\pm\infty}^xe^{ik\sigma_4(y-x)}U(k;y,t)\mu_\pm(k;y,t)e^{-ik\sigma_4(y-x)}\mathrm{d}y,\end{equation}
    with $\mu_\pm\to I_{4}\text{ as }x\to\pm\infty$.

    Let $\mu_\pm=(\mu_{\pm L},\mu_{\pm R})$.
    \begin{proposition}\label{Pro1} The matrix Jost solutions $\mu_{\pm}(k;x,t)$ have the characters:
\begin{enumerate}[(i)]
    \item $\mu_{+L}$ and $\mu_{-R}$ is analytic in the lower complex k-plane $\mathbb{C}_{-}$;

    \item $\mu_{-L}$ and $\mu_{+R}$ is analytic in the upper complex k-plane $\mathbb{C}_{+}$;

    \item $\det\mu_\pm=1$, and $\mu\pm $ satisfy the symmetry conditions $\mu_\pm^\dagger(\bar{k})=\mu_\pm^{-1}(k)$, $\mu_\pm(x,t;k)=\tau\bar{\mu}_\pm(x,t;\bar{k})\tau$,
    where $\dagger$ denotes the Hermite conjugate, $\tau=\sigma_2\otimes I_2$
        \end{enumerate}
    \end{proposition}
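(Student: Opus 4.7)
The plan is to extract all three items from the two Volterra representations in \eqref{Vol} together with direct symmetries of $U$ and $\sigma_{4}$.

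First I would conjugate the kernel: since $\sigma_{4}=\mathrm{diag}(1,1,-1,-1)$, a short computation gives
\begin{equation}\nonumber
e^{ik\sigma_{4}(y-x)}U(y)e^{-ik\sigma_{4}(y-x)}=\begin{pmatrix}0&e^{2ik(y-x)}q(y)\\-e^{-2ik(y-x)}q^{\dagger}(y)&0\end{pmatrix}.
\end{equation}
Splitting $\mu_{+}=(\mu_{+L},\mu_{+R})$ and further decomposing each two-column block into its upper $2\times 2$ half $A$ or $B$ and lower $2\times 2$ half $C$ or $D$, the matrix Volterra equation \eqref{Vol} decouples into two independent pairs. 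Solving the ``decaying'' block in each pair from its first-order ODE and substituting produces a closed Volterra equation whose kernel only involves $e^{-2ik(y-x)}$ (for $A$) or $e^{+2ik(y-x)}$ (for $D$), with $y\geq x$. These kernels are bounded precisely when $\mathrm{Im}\,k\leq 0$ and $\mathrm{Im}\,k\geq 0$, respectively. Because $q\in\mathscr{S}(\mathbb{R})$, the associated Neumann series converges uniformly on compact subsets of the corresponding half-plane, and term-wise analyticity of each iterate gives (i) for $\mu_{+L}$ and (ii) for $\mu_{+R}$; the analogous argument with $y\leq x$ handles the $\mu_{-}$ statements.

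For the identities in (iii) I would proceed in two stages. The determinant claim follows from Abel's identity: since $\mathrm{tr}(-ik\sigma_{4}+U)=0$, $\det\psi_{\pm}$ is $x$-independent, and because $\mathrm{tr}\,\sigma_{4}=0$ the gauge factor $e^{\mp ik\sigma_{4}x\mp 2ik^{2}\sigma_{4}t}$ has determinant one, so $\det\mu_{\pm}=\det\psi_{\pm}$ is also $x$-independent; evaluating at $x\to\pm\infty$ where $\mu_{\pm}\to I_{4}$ yields $\det\mu_{\pm}=1$. Both symmetries are then uniqueness arguments. Using $U^{\dagger}=-U$, a direct differentiation shows that $\mu_{\pm}(k)^{-1}$ and $\mu_{\pm}(\bar k)^{\dagger}$ both satisfy
\begin{equation}\nonumber
F_{x}=-ik[\sigma_{4},F]-FU,
\end{equation}
with $F\to I_{4}$ as $x\to\pm\infty$, so uniqueness of this linear ODE forces $\mu_{\pm}^{\dagger}(\bar k)=\mu_{\pm}^{-1}(k)$. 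For the second identity I would record the algebraic facts $\tau\sigma_{4}\tau=-\sigma_{4}$ and, using that $q=q^{T}$, $\tau\bar{U}\tau=U$; complex-conjugating \eqref{Lax} at $\bar k$ and then sandwiching by $\tau$ shows that $\tau\overline{\mu_{\pm}(\bar k)}\tau$ satisfies the same $\mu_{\pm}$-equation with the same boundary data, so uniqueness again gives equality.

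The main subtlety lies in the analyticity step: although the full Volterra integrand for a single column mixes $e^{+2ik(y-x)}$ and $e^{-2ik(y-x)}$, after the upper/lower block decoupling each resulting equation features only one of the two exponentials, and this is what pins down the correct half-plane. Everything else reduces to the traceless and Hermitian/transpose symmetries of $U$ and $\sigma_{4}$.
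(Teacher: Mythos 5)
Your proposal is correct and follows essentially the same route as the paper: conjugating the Volterra kernel by $e^{ik\sigma_4(y-x)}$ to isolate the single exponential governing each column block and reading off the half-plane of analyticity, using tracelessness of the coefficient matrix for $\det\mu_\pm=1$, and deriving both symmetries from $U^\dagger=-U$, $\tau\bar U\tau=U$ together with uniqueness of the Jost solutions. Your explicit decoupling into upper/lower $2\times2$ halves and the ODE-uniqueness phrasing of the symmetry step are only slightly more detailed versions of what the paper does.
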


    \begin{proof} The $\mu_\pm$ are written as the $2\times2$ block forms $(\mu_{\pm ij})_{2\times2}$. Through simple calculations, we get
    \begin{equation}\nonumber
    e^{ik\sigma_4(y-x)}U\mu_\pm e^{-ik\sigma_4(y-x)}=\begin{pmatrix}q\mu_{\pm21}&e^{2ik(y-x)}q\mu_{\pm22}\\-e^{-2ik(y-x)}q^\dagger\mu_{\pm11}&-q^\dagger\mu_{\pm12}
    \end{pmatrix}.\end{equation}
    Then we can get the analytic properties of the Jost solutions $\mu_{\pm}$  from the exponential term in the Volterra integral equation \eqref{Vol}.
    For example $\mu_{-22}$, $Re(2ik(y-x)=-2Imk(y-x)<0$, since $y-x<0$, so $Imk<0$, this is to say $\mu_{-R}$ is analytic in the lower complex k-plane $\mathbb{C}_{-}$.

    Since \begin{equation}\begin{aligned}
    (\det\mu_{\pm})_{x}& =\mathrm{tr}[(\mathrm{adj}\mu_\pm)(\mu_\pm)_x]  \\
    &=\mathrm{tr}\{(\mathrm{adj}\mu_\pm)[-ik(\sigma_4\mu_\pm-\mu_\pm\sigma_4)+U\mu_\pm]\} \\
    &=-ik\mathrm{tr}(\mathrm{adj}\mu_\pm)\mathrm{tr}(\sigma_4\mu_\pm-\mu_\pm\sigma_4)+\mathrm{tr}(\mathrm{adj}\mu_\pm)\mathrm{tr}(U)\mathrm{tr}(\mu_\pm) \\
    &=0,
    \end{aligned}\end{equation}
    where adj$X$ is the adjoint of matrix $X$ and tr$X$ is the trace of matrix $X$, so $\det\mu_{\pm}$ is not related to $x$, which show $\det\mu_\pm=1$.
    Note that $U$ has the symmetric relationship $U^\dagger(\bar{k})=-U(k)$, $\overline{U}=\tau U\tau$, we find that $\psi_\pm^\dagger(x,t;\bar{k})=\psi_\pm^{-1}(x,t;k)$,
    $\psi_\pm(x,t;k)=\tau\bar{\psi}_\pm(x,t;\bar{k})\tau$,  $k\in\mathbb{R}$, so we can be easily verified
    \begin{equation}
    \mu_\pm^\dagger(x,t;\bar{k})=\mu_\pm^{-1}(x,t;k), \mu_\pm(x,t;k)=\tau\bar{\mu}_\pm(x,t;\bar{k})\tau.
    \end{equation}
    \end{proof}
    \subsection{The scattering data}\label{s:2.2}
    Since $\psi_{\pm}=\mu_\pm e^{-ik\sigma_4x-2ik^2\sigma_4t}$ satisfy the same differential equation \eqref{Lax0}, they are linearly dependent. There exists a $4\times4$
     scattering matrix $S\left(k\right)$ satisfies
    \begin{equation}\label{S}
    \mu_-=\mu_+e^{-ik\sigma_4x-2ik^2\sigma_4t}S(k)e^{ik\sigma_4x+2ik^2\sigma_4t},\quad\det S(k)=1.\end{equation}
    By \eqref{Vol}, evaluation \eqref{S} at $x\to+\infty,t=0$ gives
    \begin{equation}\label{S0}
    \begin{aligned}
    {S(k)}& =\lim_{x\to+\infty}e^{ikx\sigma_4}\mu_-(x,0;k)e^{-ikx\sigma_4}  \\
    &=I_{4}+\int_{-\infty}^{+\infty}e^{iky\sigma_4}U(y,0;k)\mu_-(y,0;k)e^{-iky\sigma_4}\mathrm{d}y,
    \end{aligned}\end{equation}
    which implies that $S(k)$ can be determined by the initial data.

    From the symmetry property of $\mu_\pm$, the scattering matrix $S(k)$ has symmetry condition $S^\dagger(\bar{k})=S^{-1}(k)$, $S(k)=\tau\bar{S}(\bar{k})\tau$.
    Write $S(k)$ in $2\times2$ block form
    \begin{equation}\label{215}
    S(k)=\begin{pmatrix}a(k)&-\bar{b}(\bar{k})\\b(k)&\bar{a}(\bar{k})\end{pmatrix}
\end{equation}
    then it is easy to see that $a(k)$ and $\bar{a}(\bar{k})$ can be analytically extended to $\mathbb{C}_+$ and $\mathbb{C}_-$, respectively. Moreover,
    \begin{equation}\nonumber
    a^\dagger(\bar{k})a(k)+b^\dagger(\bar{k})b(k)=I_2,\quad a^\mathrm{T}(k)b(k)=b^\mathrm{T}(k)a(k),
\end{equation}
    \begin{equation}\nonumber
    a(k)a^\dagger(\bar{k})+\bar{b}(\bar{k})b^\mathrm{T}(k)=I_2,\quad a(k)b^\dagger(\bar{k})=\bar{b}(\bar{k})a^\mathrm{T}(k).
\end{equation}

    \eqref{S0} implies
    \begin{equation}\label{218}
    a(k)=I_2+\int_{-\infty}^{+\infty}q(x,0)\mu_{-21}(x,0;k)\mathrm{d}x,
\end{equation}
    \begin{equation}\nonumber
    b(k)=-\int_{-\infty}^{+\infty}e^{-2ikx}q^\dagger(x,0)\mu_{-11}(x,0;k)\mathrm{d}x.
\end{equation}
    Suppose that $q(x,0)\in L^1(\mathbb{R})$, $a(k)$ and $b(k)$ are well defined for $k\in\mathbb{R}$, $a(k)$ can be analytically continued onto $\mathbb{C}^+$.
     Furthermore, it follows from equation \eqref{S} and \eqref{215} that $a(k)$ and $b(k)$ can be expressed in terms of $\mu_{\pm}(x,t;k)$ or $\psi_\pm(x,t;k)$:
    \begin{subequations}\label{220}
\begin{align}
&a(k)=\mu_{+\mathrm{L}}^\dagger(\bar{k})\mu_{-\mathrm{L}}(k)=\psi_{+\mathrm{L}}^\dagger(\bar{k})\psi_{-\mathrm{L}}(k), &k\in\mathbb{C}^+\cup\mathbb{R}, \label{Z1}\\
&\det[a(k)]=\det[\mu_{-\mathrm{L}}(k),\mu_{+\mathrm{R}}(k)]=\det[\psi_{-\mathrm{L}}(k),\psi_{+\mathrm{R}}(k)],\quad&k\in\mathbb{C}^+\cup\mathbb{R},\label{Z2} \\
&\bar{a}(\bar{k})=\mu_{+\mathbb{R}}^\dagger(\bar{k})\mu_{-\mathbb{R}}(k)=\psi_{+\mathbb{R}}^\dagger(\bar{k})\psi_{-\mathbb{R}}(k), &k\in\mathbb{C}^-\cup\mathbb{R}, \label{Z3}\\
&\det[\bar{a}(k)]=\det[\mu_{+\mathrm{L}}(k),\mu_{-\mathrm{R}}(k)]=\det[\psi_{+\mathrm{L}}(k),\psi_{-\mathrm{R}}(k)],\quad&k\in\mathbb{C}^-\cup\mathbb{R},\label{Z4}\\
&b(k)=e^{-2i\theta(k)}\mu_{+\mathrm{R}}^\dagger(k)\mu_{-\mathrm{L}}(k)=\psi_{+\mathrm{R}}^\dagger(k)\psi_{-\mathrm{L}}(k), &k\in\mathbb{R},\label{Z5}\\
&-\bar{b}(\bar{k})=e^{2i\theta(k)}\mu_{+\mathrm{L}}^\dagger(k)\mu_{-\mathrm{R}}(k)=\psi_{+\mathrm{L}}^\dagger(k)\psi_{-\mathrm{R}}(k), &k\in\mathbb{R}.\label{Z6}
\end{align}
\end{subequations}

    The reflection coefficient $\gamma(k)$ is then defined by
    \begin{equation}\label{221}
    \gamma(k)=b(k)a^{-1}(k),\quad k\in\mathbb{R}.
\end{equation}
    Therefore, we have $\gamma^\mathrm{T}(k)=\gamma\left(k\right)$, $\gamma(k)\gamma^\dagger(k)=\gamma^\dagger(k)\gamma(k).$

    \subsection{\it{A basic RH problem}}\label{s:2.3}
    The high-order pole solutions of the spin-1 GP equation obtained using the RH method in \cite{boo-27}. We cited some results from \cite{boo-27}.
    Combining Proposition \ref{Pro1}, we have
    \begin{equation}\nonumber
    \psi_{\pm j_4}(x,t;k)=\mathcal{G}[\bar{\psi}_{\pm j_1}(x,t;\bar{k}),\bar{\psi}_{\pm j_2}(x,t;\bar{k}),\bar{\psi}_{\pm j_3}(x,t;\bar{k})],\quad k\in\mathbb{R},
\end{equation}
    where $(j_1,j_2,j_3,j_4)$ is an even permutation of $(1,2,3,4)$ and $¡°\mathcal{G}[\cdot]¡±$ represents the generalized cross product defined in \cite{boo-62},
    for all $u_1,u_2,u_3\in\mathbb{C}^4$,
    \begin{equation}\nonumber
    \mathcal{G}[u_1,u_2,u_3]=\sum_{j=1}^4\det(u_1,u_2,u_3,e_j)e_j,
\end{equation}
    among them, $\{e_1,e_2,e_3,e_4\}$ represents the standard base of $\mathbb{R}^4$. By direct calculation, it is easy to verify the relationship between the
    conjugate matrix $\overline{(\cdot)}$ and the generalized cross product $\mathcal{G}[\cdot]$:
    \begin{equation}\label{33}
    \bar{u}=\begin{pmatrix}-\mathcal{G}^\mathrm{T}[u_2,u_3,u_4]\\\mathcal{G}^\mathrm{T}[u_1,u_3,u_4]\\-\mathcal{G}^\mathrm{T}[u_1,u_2,u_4]\\
    \mathcal{G}^\mathrm{T}[u_1,u_2,u_3]\end{pmatrix}.
\end{equation}
    \begin{lemma}\label{lemma22}
    for $u_1,u_2,u_3 \in \mathbb{C}^4$, $\mathcal{G}[u_1,u_2,u_3] = 0$ if and only if $u_1,u_2,u_3$ are linearly correlated. In addition, $\mathcal{G}[\cdot]$ is
    also multilinear and completely antisymmetric.
    \end{lemma}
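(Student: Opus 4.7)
The plan is to treat the two halves of Lemma \ref{lemma22} separately, extracting everything from elementary properties of the $4\times 4$ determinant.

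For the multilinearity and complete antisymmetry claim, I would first observe that each coefficient $\det(u_1,u_2,u_3,e_j)$ appearing in the definition
\[
\mathcal{G}[u_1,u_2,u_3]=\sum_{j=1}^4\det(u_1,u_2,u_3,e_j)\,e_j
\]
is, as a function of the triple $(u_1,u_2,u_3)$ with $e_j$ held fixed, multilinear and alternating in its three slots. Since a finite linear combination (with the fixed basis vectors $e_j$) of multilinear, alternating forms is again multilinear and alternating, $\mathcal{G}[\,\cdot\,]$ inherits both properties. No work beyond recalling the standard properties of $\det$ is required.

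For the vanishing criterion, the easy direction is the forward implication: if $u_1,u_2,u_3$ are linearly dependent in $\mathbb{C}^4$, then for every $j$ the four columns $(u_1,u_2,u_3,e_j)$ are automatically dependent, so each determinant vanishes and $\mathcal{G}[u_1,u_2,u_3]=0$. The converse is the only part that needs a short argument. I would argue by contrapositive: assume $u_1,u_2,u_3$ are linearly independent, so that $V=\mathrm{span}\{u_1,u_2,u_3\}$ is a $3$-dimensional subspace of $\mathbb{C}^4$. Since $\dim V<4$, the standard basis $\{e_1,e_2,e_3,e_4\}$ cannot lie entirely in $V$; otherwise $V$ would contain a basis of $\mathbb{C}^4$ and have dimension $4$. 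Therefore some $e_{j_0}\notin V$, which makes $(u_1,u_2,u_3,e_{j_0})$ a basis of $\mathbb{C}^4$ and gives $\det(u_1,u_2,u_3,e_{j_0})\neq 0$. Consequently the $e_{j_0}$ coefficient of $\mathcal{G}[u_1,u_2,u_3]$ is nonzero, so $\mathcal{G}[u_1,u_2,u_3]\neq 0$.

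There is essentially no obstacle here; the lemma is a repackaging of the fact that $\det$ is multilinear, alternating, and nondegenerate on bases, together with the simple dimension count $\dim V=3<4$. The only subtlety worth flagging in the write-up is the choice of a basis vector $e_{j_0}$ outside $V$, which is what distinguishes the $4$-dimensional ambient space from a general setting and is the reason $\mathcal{G}$ is built from exactly the four determinants $\det(u_1,u_2,u_3,e_j)$.
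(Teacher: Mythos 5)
Your argument is correct: multilinearity and complete antisymmetry of $\mathcal{G}$ follow immediately from the corresponding properties of the determinant in its first three columns, the forward implication of the vanishing criterion is trivial, and your contrapositive for the converse (pick $e_{j_0}\notin\spn\{u_1,u_2,u_3\}$, which exists since a $3$-dimensional subspace of $\mathbb{C}^4$ cannot contain all four standard basis vectors, so $\det(u_1,u_2,u_3,e_{j_0})\neq 0$) is sound. The paper itself states this lemma without proof (deferring to the reference where $\mathcal{G}$ is defined), so there is no in-paper argument to compare against; your write-up supplies the standard proof one would expect.
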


     \begin{proposition}\label{prop2.3}
     If $k_{0}$ is the zero of $\det[a(k)]$ and the multiplicity is $m+1$, then there exist $m+1$ $\text{complex-valued}$ constant $2\times2$ symmetric matrices
     $\mathbf{B}_0,\mathbf{B}_1,\ldots,\mathbf{B}_m$ with $\mathbf{B}_0$ is not equal to the zero matrix, such that for each $n\in\{0,\ldots,m\}$,
    \begin{equation}\label{34}
    \frac{[\psi_{-L}(x,t;k_0)\mathrm{adj}[a(k_0)]]^{(n)}}{n!}=\sum_{\overset{j+l=n}{j,l\geqslant0}}\frac{\psi_{+R}^{(k)}(x,t;k_0)\mathbf{B}_j}{j!l!}.
\end{equation}
    In addition,
    \begin{equation}\label{35}
    \frac{\left[\psi_{-R}(x,t;\bar{k}_0)\mathrm{adj}[\bar{a}(k_0)]\right]^{(n)}}{n!}=-\sum_{\overset{j+l=n}{j,l\geqslant0}}\frac{\psi_{+L}^{(l)}(x,t;\bar{k}_0)
    \mathbf{B}_j^\dagger}{j!l!}.
\end{equation}
\end{proposition}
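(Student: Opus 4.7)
The plan is to start from the scattering relation and differentiate at $k_0$ by Leibniz. Reading off the first two columns of $\psi_-=\psi_+S$ with $S$ as in \eqref{215} gives
\[
\psi_{-L}(x,t;k)=\psi_{+L}(x,t;k)\,a(k)+\psi_{+R}(x,t;k)\,b(k),\qquad k\in\mathbb{R}.
\]
Right-multiplying by $\mathrm{adj}[a(k)]$ and using $a\,\mathrm{adj}(a)=\det(a)I_2$ produces the basic identity
\[
\psi_{-L}(x,t;k)\,\mathrm{adj}[a(k)]=\det[a(k)]\,\psi_{+L}(x,t;k)+\psi_{+R}(x,t;k)\cdot b(k)\,\mathrm{adj}[a(k)].
\]
I then set $\mathbf{B}(k):=b(k)\,\mathrm{adj}[a(k)]$ and define the candidate matrices $\mathbf{B}_j:=\mathbf{B}^{(j)}(k_0)/j!$ for $j=0,\ldots,m$. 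Differentiating the displayed identity $n$ times at $k=k_0$ with Leibniz, the multiplicity hypothesis $[\det a]^{(j)}(k_0)=0$ for $0\le j\le m$ makes the $\psi_{+L}\det[a]$ contribution vanish for every $n\le m$, and collecting the remaining terms gives exactly \eqref{34}.

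The main subtlety is that the identity above is a priori valid only on $\mathbb{R}$, whereas the Taylor expansion is performed at $k_0\in\mathbb{C}^+$. The left-hand side extends analytically to $\mathbb{C}^+$ by Proposition \ref{Pro1}(i)-(ii) and so does $\psi_{+R}$, but $\psi_{+L}(k)$ and $b(k)$ do not individually continue from $\mathbb{R}$. To get past this, I plan to combine the identity \eqref{33} with the Schwarz reflection $\psi_\pm(x,t;k)=\tau\bar\psi_\pm(x,t;\bar k)\tau$ of Proposition \ref{Pro1}(iii) so as to rewrite each column of $\psi_{+L}(k)$ as a generalized cross product $\mathcal{G}$ of columns of $\bar\psi_+(x,t;\bar k)$ that \emph{are} analytic in $k\in\mathbb{C}^+$. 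With that substitution both $\psi_{+L}(k)\det[a(k)]$ and $\psi_{+R}(k)\mathbf{B}(k)$ become honest analytic $4\times2$ matrix-valued functions in a neighborhood of $k_0$, the scattering identity propagates there, and the Leibniz bookkeeping of the previous paragraph is legitimized.

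Symmetry of the $\mathbf{B}_j$ follows from the scattering symmetry $a^T(k)b(k)=b^T(k)a(k)$, equivalently $\gamma^T=\gamma$ for $\gamma$ as in \eqref{221}: then $\mathbf{B}(k)=\det[a(k)]\gamma(k)$ is symmetric on $\mathbb{R}$, hence on its analytic continuation near $k_0$, so each Taylor coefficient $\mathbf{B}_j$ is a symmetric $2\times2$ matrix. For $\mathbf{B}_0\neq 0$: under the generic assumption that the zero $k_0$ of $\det[a]$ has geometric multiplicity one, $\mathrm{adj}[a(k_0)]$ has rank one with image equal to $\ker a(k_0)\neq\{0\}$; the full column rank of $\psi_{-L}(k_0)$ (a consequence of $\det\psi_-(k_0)=1$) then gives $\psi_{-L}(k_0)\mathrm{adj}[a(k_0)]\neq 0$, and the $n=0$ case of \eqref{34} together with the full column rank of $\psi_{+R}(k_0)$ forces $\mathbf{B}_0\neq 0$. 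Finally, \eqref{35} is obtained by the strictly parallel argument at $\bar k_0\in\mathbb{C}^-$: start from the right-block relation $\psi_{-R}(k)=-\psi_{+L}(k)\bar b(\bar k)+\psi_{+R}(k)\bar a(\bar k)$, right-multiply by $\mathrm{adj}[\bar a(\bar k)]$, and invoke Proposition \ref{Pro1}(iii) together with $S^\dagger(\bar k)=S^{-1}(k)$ to identify the resulting coefficients as $\mathbf{B}_j^\dagger$; the overall minus sign tracks the entry $S_{UR}=-\bar b(\bar k)$. The principal obstacle throughout is exclusively the analytic-continuation step of the middle paragraph; once the identity is extended to $\mathbb{C}^+$ via the cross-product formula \eqref{33}, everything else reduces to direct Leibniz bookkeeping and a rank argument.
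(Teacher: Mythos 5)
Your starting identity $\psi_{-L}(k)=\psi_{+L}(k)a(k)+\psi_{+R}(k)b(k)$ and the Leibniz bookkeeping are fine on $\mathbb{R}$, and you correctly locate the crux in the analytic continuation to $k_0\in\mathbb{C}^+$. But the fix you propose for that step does not work, and this is a genuine gap. The cross-product representation \eqref{33} combined with $\psi_\pm(k)=\tau\bar\psi_\pm(\bar k)\tau$ does \emph{not} express the columns of $\psi_{+L}(k)$ through $\mathbb{C}^+$-analytic data: the $\tau$-symmetry rewrites $\psi_{+L}(k)$ in terms of $\bar\psi_{+R}(\bar k)$, and the cross-product formula for a column of $\psi_{+L}$ necessarily involves $\bar\psi_{+3}(\bar k)$ and $\bar\psi_{+4}(\bar k)$; since $\psi_{+R}$ is analytic in $\mathbb{C}^+$, these reflected objects are analytic in $\mathbb{C}^-$, so the substitution leaves you with a mixture of functions analytic in opposite half-planes. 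What actually continues into $\mathbb{C}^+$ is the conjugated-and-reflected combination $\det[a(k)]\,\overline{\psi_{+L}(\bar k)}$ — this is the content of the paper's identity \eqref{310}, obtained from the inverse relation \eqref{37} — and on the real axis this equals $\det[a(k)]\,\overline{\psi_{+L}(k)}$, which is not the term $\det[a(k)]\,\psi_{+L}(k)$ appearing in your identity. A related and independent problem is your definition $\mathbf{B}_j:=\mathbf{B}^{(j)}(k_0)/j!$ with $\mathbf{B}(k)=b(k)\,\mathrm{adj}[a(k)]=\det[a(k)]\gamma(k)$: for Schwartz-class data $b(k)$ and $\gamma(k)$ are defined only on $\mathbb{R}$ and have no analytic continuation to a neighborhood of $k_0$, so these Taylor coefficients simply do not exist. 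This is precisely why the paper cannot (and does not) produce the $\mathbf{B}_j$ as derivatives of an explicit function.

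The paper's route avoids both problems: from \eqref{310}, the vanishing of $\det[a]$ to order $m+1$ at $k_0$ forces $\mathcal{G}[\chi_1^{(\cdot)},\psi_{+3}^{(\cdot)},\psi_{+4}^{(\cdot)}]$ relations among derivatives of the $\mathbb{C}^+$-analytic quantities $\chi_1=\big(\psi_{-L}\mathrm{adj}[a]\big)_1$ and $\psi_{+R}$; Lemma \ref{lemma22} converts each vanishing cross product into a linear dependence, and an induction on $n$ constructs the constant vectors $\alpha_j$ (hence $\mathbf{B}_j$) abstractly rather than as Taylor coefficients. Equation \eqref{35} is then deduced from the companion identity \eqref{318} using the already-established \eqref{34} and $\operatorname{rank}\psi_{+R}(k_0)=2$, not by a separate continuation at $\bar k_0$ (your parallel argument there would founder on $\bar b(\bar k)$ for the same reason). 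Your observations about symmetry of $\mathbf{B}_j$ and about $\mathbf{B}_0\neq0$ requiring $\mathrm{adj}[a(k_0)]\neq0$ are reasonable, but they sit downstream of a construction of the $\mathbf{B}_j$ that your proposal does not actually deliver.
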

    \begin{proof} From the symmetric of $\psi_\pm(x,t;k)$, it follows that for $k\in\mathbb{R}$, we have
    \begin{equation}\nonumber
    \psi_{\pm\mathrm{L}}^\dagger(x,t;\bar{k})\psi_{\pm\mathrm{R}}(x,t;k)=0.
\end{equation}
    The equation can be analytically extended to the upper complex plane $\mathbb{C}^{+}$. Combining this with equation \eqref{220}, we get
    \begin{equation}\label{37}
    (\psi_{{+L}}(\bar{k}),\psi_{{-R}}(\bar{k})\bar{a}^{-1}(k))^{\dagger}(\psi_{{-L}}(k)a^{-1}(k),\psi_{{+R}}(k))=I_{4}.
\end{equation}
    Moreover, we have for $k\in\mathbb{R}$,
    \begin{equation}\nonumber
    \det(\psi_{+L}(\bar{k}),\psi_{-R}(\bar{k})\bar{a}^{-1}(k))=\det(\psi_{-L}(k)a^{-1}(k),\psi_{+R}(k))=1,
\end{equation}
    which also can be analytically continued onto the upper complex plane $\mathbb{C}^{+}$.

    We define
    \begin{equation}\nonumber
    \psi_{{-L}}(x,t;k)\mathrm{adj}[a(k)]=\left(\chi_{1}(x,t;k),\chi_{2}(x,t;k)\right).
\end{equation}
    Combine equation \eqref{33} with equation \eqref{37} to get
    \begin{equation}\label{310}
    \det[a(k)]\bar{\psi}_{{+L}}(\bar{k})=(-\mathcal{G}[\chi_{2}(k),\psi_{+3}(k),\psi_{+4}(k)],\mathcal{G}[\chi_{1}(k),\psi_{+3}(k),\psi_{+4}(k)]).
    \end{equation}
    We will prove this by induction of $n$. For the basic case $n=0$, we can see from equation \eqref{310} that the vector $\chi_1(x,t;k_0),\psi_{+3}(x,t;k_0)$
    and $\psi_{+4}(x,t;k_0)$ are linearly dependent. The fact $\mathrm{rank}(\psi_{+R}(x,t;k)) = 2$ means that there must be a non-zero complex valued constant
    vector $\alpha_{0}$ such that $\chi_1(x,t;k_0) = \psi_{+R}(x,t;k_0)\alpha_0$. Now, suppose it is true for $0\leqslant n\leqslant j-1$ that there exists the
    complex valued constant vector $\alpha_0,\alpha_1,\ldots,\alpha_{j-1}$ such that for every $n\in\{0,\ldots,j-1\}$,
    \begin{equation}\label{311}
    \frac{\chi_1^{(n)}(k_0)}{n!}=\sum_{\overset{r+s=n}{r,s\geqslant0}}\frac{\psi_{+R}^{(s)}(k_0)\alpha_r}{r!s!},
\end{equation}
    where $(x,t$)-dependence are omitted for brevity. We need to prove that this statement is also true for $n=j$. Recall that
    $\frac{\partial^l}{\partial k^l}\det[a(k)]|_{k=k_0}=0,l=0,\ldots,j$ and equation \eqref{310}, we find
    \begin{equation}\label{312}
    \sum_{\overset{{v+l+s=j}}{v,l,s\geqslant0}}\frac{j!}{v!l!s!}\mathcal{G}\left[\chi_1^{(s)}(k_0),\psi_{+3}^{(v)}(k_0),\psi_{+4}^{(l)}(k_0)\right]=0.
\end{equation}
    Substituting equation \eqref{311} into equation \eqref{312} yields

\begin{equation}\nonumber
\begin{aligned}\mathbf{0}=&\mathcal{G}\left[\chi_{1}^{(j)}(\lambda_{0}),\psi_{+3}(\lambda_{0}),\psi_{+4}(\lambda_{0})\right]\\
&\begin{aligned}&+\sum_{\substack{k+l+r+s=j\\
  r+s\neq j\\
k,l,r,s\geqslant 0}}\frac{j!}{k!l!r!s!}\mathcal{G}\left[\psi_{+\mathrm{R}}^{(s)}(\lambda_0)\alpha_r,\psi_{+3}^{(k)}(\lambda_0),\psi_{+4}^{(l)}(\lambda_0)
\right]\end{aligned}\\
=&\begin{aligned}\mathcal{G}
\left[\chi_{1}^{(j)}(\lambda_{0}),\psi_{+\mathrm{R}}(\lambda_{0})\right]&+\Big(\sum_{\substack{ k+l+r+s=j\\
  r+s\neq j\\
  k+r\neq j\\
   l+r\neq j\\
 k,l,r,s\geqslant 0}}+\sum_{\substack{ k+l+r+s=j\\
  r+s\neq j\\
  k+r= j\\
 k,l,r,s\geqslant 0}}+\sum_{\substack{ k+l+r+s=j\\
  r+s\neq j\\
  l+r= j\\
 k,l,r,s\geqslant 0}}\Big)\end{aligned}\\
&\frac{j!}{k!l!r!s!}\mathcal{G}
\left[\psi_{+\mathrm{R}}^{(s)}(\lambda_{0})\alpha_{r},\psi_{+3}^{(k)}(\lambda_{0}),\psi_{+4}^{(l)}(\lambda_{0})\right]
\end{aligned}\end{equation}

\begin{equation}\nonumber
\begin{aligned}&=\mathcal{G}\left[\chi_{1}^{(j)}(\lambda_{0}),\psi_{+\mathrm{R}}(\lambda_{0})\right]
+\sum_{\begin{array}{c}k+r=j\\k>0,r\geqslant0\end{array}}
\frac{j!}{k!r!}\mathcal{G}\left[\psi_{+\mathrm{R}}(\lambda_{0})\alpha_{r},\psi_{+3}^{(k)}(\lambda_{0}),\psi_{+4}(\lambda_{0})\right]\\
&+\sum_{\binom{l+r=j}{l>0,r\geqslant0}}\frac{j!}{l!r!}\mathcal{G}\left[\psi_{+\mathrm{R}}(\lambda_{0})\alpha_{r},\psi_{+3}(\lambda_{0}),\psi_{+4}^{(l)}(\lambda_{0})\right]\\
&\mathcal{G}\left[\chi_{1}^{(j)}(\lambda_{0}),\psi_{+\mathrm{R}}(\lambda_{0})\right]-\sum_{\begin{array}{c}k+r=j\\k>0,r\geqslant0\end{array}}
\frac{j!}{k!r!}\mathcal{G}\left[\psi_{+\mathrm{R}}^{(k)}(\lambda_{0})\alpha_{r},\psi_{+\mathrm{R}}(\lambda_{0})\right]\\
=&\mathcal{G}\left[\chi_{1}^{(j)}(\lambda_{0})-\sum_{\begin{array}{c}k+r=j\\k>0,r\geqslant0\end{array}}
\frac{j!}{k!r!}\psi_{+\mathrm{R}}^{(k)}(\lambda_{0})\alpha_{r},\psi_{+\mathrm{R}}(\lambda_{0})\right]\end{aligned}\end{equation}

    Since $\mathrm{rank}(\psi_{+R}(k_0)) = 2$, lemma \ref{lemma22} shows that there is a constant vector $\alpha_{j}$, such that
    \begin{equation}\nonumber
    \chi_1^{(j)}(k_0)-\sum_{\substack{v+r=j\\v>0,r\geqslant0}}\frac{j!}{v!r!}\psi_{+\mathrm{R}}^{(v)}(k_0)\alpha_r=\psi_{+R}(k_0)\alpha_j,
\end{equation}
    it is
    \begin{equation}\nonumber
    \chi_1^{(j)}(k_0)=\sum_{\substack{v+r=j\\v,r\geqslant0}}\frac{j!}{v!r!}\psi_{+\mathrm{R}}^{(v)}(k_0)\alpha_r.
\end{equation}
    By induction hypothesis, it is proved that the complex valued constant vectors $\alpha_0,\alpha_1,\ldots,\alpha_m$ such that for every $n\in\{0,\ldots,m\}$,
    \begin{equation}\nonumber
    \frac{\chi_1^{(n)}(k_0)}{n!}=\sum_{\substack{r+s=n\\r,s\geqslant0}}\frac{\psi_{+\mathrm{R}}^{(s)}(k_0)\alpha_r}{r!s!}.
\end{equation}
    Let $\mathbf{B}_n=(\alpha_n,\beta_n),n=0,\ldots,m,$ we obtain equation \eqref{34}.

    According to equation \eqref{37},
    \begin{equation}\nonumber
    (\psi_{-L}(k)a^{-1}(k),\psi_{+R}(k))(\psi_{+L}(\bar{k}),\psi_{-R}(\bar{k}){\bar{a}}^{-1}(k))^{\dagger}=I_{4},\quad k\in\mathbb{C}^{+}\cup\mathbb{R}.
\end{equation}
    In other form
    \begin{equation}\label{318}
    (\psi_{{-L}}(k)\mathrm{adj}[a(k)],\psi_{{+R}}(k))\begin{pmatrix}\psi_{{+L}}^{\dagger}(\bar{k})\\\mathrm{adj}[a^{{T}}(k)]\psi_{{-R}}^{\dagger}(\bar{k})
    \end{pmatrix}=\mathrm{det}[a(k)]I_{4},\quad k\in\mathbb{C}^{+}\cup\mathbb{R}.
\end{equation}
    For $n=0,\ldots,m,$ the equation \eqref{318} is evaluated at $k_0$ by differentiating both sides $n$ times we get
    \begin{equation}\nonumber
    \sum\limits_{\substack{v+l=n\\v,l\geqslant0}}\frac{1}{v!l!}(\psi_{-\mathrm{L}}(k_0)\mathrm{adj}[a(k_0)], \psi_{+\mathrm{R}}(k_0))^{(v)}
    \begin{pmatrix}\psi_{+\mathrm{L}}^\dagger(\bar{k}_0)\\\mathrm{adj}[a^\mathrm{T}(k_0)]\psi_{-R}^\dagger(\bar{k}_0)\end{pmatrix}^{(l)}=0.
\end{equation}
    When $n = 0$, it is obtained by combining equation \eqref{34}
    \begin{equation}\nonumber
    \psi_{+R}(k_0)\left[\mathbf{B}_0\psi_{+L}^\dagger(\bar{k}_0)+\mathrm{adj}[a^\mathrm{T}(k_0)]\psi_{-R}^\dagger(\bar{k}_0)\right]=0.
\end{equation}
    Since $\mathrm{rank}(\psi_{+R}(k_0))=2$, we can deduce that
    \begin{equation}\nonumber
    \mathbf{B}_0\psi_{+L}^\dagger(\bar{k}_0)+\mathrm{adj}[a^\mathrm{T}(k_0)]\psi_{-R}^\dagger(\bar{k}_0)=0,
\end{equation}
    which is exactly equation \eqref{35} for $n = 0$. By induction, we prove that equation \eqref{35} holds for $n=0,\ldots,m.$
    \end{proof}

    \begin{corollary}\label{24}
    Suppose $k_{0}$ is the zero of $\det[a(k)]$ with multiple gravity $m+1,$ then for every $n\in\{0,\ldots,m\}$:
    \begin{equation}\nonumber
    \frac{[\mu_{-L}(x,t;k_0)\mathrm{adj}[a(k_0)]]^{(n)}}{n!}=\sum_{\substack{j+v+l=n\\j,v,l\geqslant0}}\frac{\Theta^{(v)}(x,t;k_0)\mu_{+R}^{(l)}(x,t;k_0)\mathbf{B}_j}{j!v!l!},
\end{equation}
    \begin{equation}\label{323}
    \frac{\left[\mu_{-R}(x,t;\bar{k}_0)\mathrm{adj}[\bar{a}(k_0)]\right]^{(n)}}{n!}=-\sum_{\substack{j+v+l=n\\j,v,l\geqslant0}}
    \frac{\overline{\Theta^{(v)}(x,t;k_0)}\mu_{+L}^{(l)}(x,t;\bar{k}_0)\mathbf{B}_j^\dagger}{j!v!l!},
\end{equation}
    where $\Theta(x,t;k)=e^{2i\theta(x,t;k)}$, and $\mathbf{B}_0,\mathbf{B}_1,\ldots,\mathbf{B}_m$ are given in Proposition \eqref{prop2.3}.
    \end{corollary}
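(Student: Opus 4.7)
The plan is to reduce the corollary to Proposition~\ref{prop2.3} by carrying out the change of variable $\psi_{\pm}(x,t;k)=\mu_{\pm}(x,t;k)e^{-ik\sigma_{4}x-2ik^{2}\sigma_{4}t}$. Since $\sigma_{4}=\mathrm{diag}(1,1,-1,-1)$, this relation splits column-wise as
$$
\psi_{\pm L}(x,t;k)=\mu_{\pm L}(x,t;k)e^{-i\theta(x,t;k)},\qquad \psi_{\pm R}(x,t;k)=\mu_{\pm R}(x,t;k)e^{i\theta(x,t;k)},
$$
with $\theta(x,t;k)=kx+2k^{2}t$, so that $\Theta(x,t;k)=e^{2i\theta(x,t;k)}$ is precisely the factor accounting for the mismatch between the two exponentials appearing on the opposite sides of the identities in Proposition~\ref{prop2.3}.

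For the first identity, I would substitute these column relations into \eqref{34}. That equation can be rephrased as the Taylor-coefficient identity
$$
\psi_{-L}(k)\,\mathrm{adj}[a(k)]-\psi_{+R}(k)\,\mathbf{B}(k)=O\!\left((k-k_{0})^{m+1}\right),\quad \mathbf{B}(k):=\sum_{j=0}^{m}\tfrac{\mathbf{B}_{j}}{j!}(k-k_{0})^{j}.
$$
Replacing $\psi_{-L}$ and $\psi_{+R}$ by their $\mu$-counterparts and then multiplying through by the nonvanishing scalar $e^{i\theta(k)}$ converts this into
$$
\mu_{-L}(k)\,\mathrm{adj}[a(k)]-\mu_{+R}(k)\,\Theta(k)\,\mathbf{B}(k)=O\!\left((k-k_{0})^{m+1}\right).
$$
Extracting the $n$-th Taylor coefficient at $k_{0}$ on both sides and applying the Leibniz rule to the triple product $\mu_{+R}(k)\Theta(k)\mathbf{B}(k)$ produces exactly the sum over $j+v+l=n$ stated in the first formula of the corollary.

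The second identity is treated analogously, starting from \eqref{35}. After the substitutions $\psi_{-R}=\mu_{-R}e^{i\theta}$ and $\psi_{+L}=\mu_{+L}e^{-i\theta}$ and multiplication by $e^{-i\theta(k)}$, one obtains
$$
\mu_{-R}(k)\,\mathrm{adj}[\bar{a}(k)]=-\mu_{+L}(k)\,e^{-2i\theta(k)}\,\mathbf{B}^{\dagger}(k)+O\!\left((k-\bar{k}_{0})^{m+1}\right).
$$
The only point that warrants an explicit verification is the identity $\partial_{k}^{v}\,e^{-2i\theta(k)}\big|_{k=\bar{k}_{0}}=\overline{\Theta^{(v)}(x,t;k_{0})}$; this holds because $e^{-2i\theta(k)}$ and $\overline{e^{2i\theta(\bar{k})}}$ coincide on $\mathbb{R}$ and both are entire, hence agree on $\mathbb{C}$, so their derivatives at $\bar{k}_{0}$ are conjugates of derivatives of $\Theta$ at $k_{0}$. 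With this identification, matching Taylor coefficients at $\bar{k}_{0}$ gives the second formula.

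I do not anticipate a substantive obstacle: the argument is essentially a bookkeeping translation from the $\psi$-normalization to the $\mu$-normalization, with all the analytic and algebraic content already packaged in Proposition~\ref{prop2.3}. The only step that requires genuine care is the consistent handling of the exponential factor, in particular the combination of $e^{\pm i\theta}$ on the two sides into a single $\Theta$ (or $\overline{\Theta}$) and the bookkeeping of the conjugation in the second identity that arises from evaluating at $\bar{k}_{0}$ rather than $k_{0}$.
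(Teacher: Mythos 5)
Your argument is correct and is essentially the paper's own proof in a slightly cleaner packaging: the paper writes $\mu_{-L}=\Theta^{1/2}\psi_{-L}$ and $\psi_{+R}=\Theta^{1/2}\mu_{+R}$, applies Leibniz twice and recombines the two $\Theta^{1/2}$ derivative sums into $\Theta^{(v)}/v!$, whereas you recast Proposition \ref{prop2.3} as an $O\bigl((k-k_0)^{m+1}\bigr)$ jet identity, multiply once by the scalar gauge factor, and read off the triple Leibniz expansion in one step. Your explicit check that $\partial_k^v e^{-2i\theta(k)}\big|_{k=\bar k_0}=\overline{\Theta^{(v)}(k_0)}$ via Schwarz reflection is the only point the paper glosses over, and it is handled correctly.
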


    \begin{proof} For simplicity, we omit the $(x,t)\text{-dependence}$. It is derived from Proposition \ref{prop2.3} that

    \begin{align}
&\frac{\mu_{-L}(k_{0})\mathrm{adj}[a(k_{0})]]^{(n)}}{n!}=\frac{[\Theta^{\frac{1}{2}}(k_{0})\psi_{-L}(k_{0})\mathrm{adj}[a(k_{0})]]}{n!}\nonumber \\
&=\sum_{\overset{r+s=n}{r,s\geqslant0}}\frac{(\Theta^{\frac12})^{(r)}(k_0)[\psi_{-L}(k_0)\mathrm{adj}[a(k_0)]]^{(s)}}{r!s!}\nonumber \\
&=\sum_{\overset{r+s=n}{r,s\geqslant0}}\sum_{\overset{j+m=s}{j,m\geqslant0}}\frac{(\Theta^{\frac12})^{(r)}(k_0)\psi_{+\mathrm{R}}^{(m)}(k_0)\mathbf{B}_j}{r!j!m!}\nonumber \\
&=\sum_{\overset{r+j+m=n}{r,j,m\geqslant0}}\frac{(\Theta^{\frac12})^{(r)}(k_0)(\Theta^{\frac12}\mu_{+\mathrm{R}})^{(m)}(k_0)\mathbf{B}_j}{r!j!m!} \nonumber\\
&=\sum_{\overset{r+j+h+l=n}{r,j,h,l\geqslant0}}\frac{(\Theta^{\frac12})^{(r)}(k_0)(\Theta^{\frac12})^{(h)}(k_0)\mu_{+\mathrm{R}}^{(l)}(k_0)\mathbf{B}_j}{r!j!h!l!}\nonumber \\
&=\sum_{\overset{j+v+l=n}{j,v,l\geqslant0}}\sum_{\overset{r+h=v}{r,h\geqslant0}}\frac{(\Theta^{\frac12})^{(r)}(k_0)(\Theta^{\frac12})^{(h)}(k_0)}
{r!h!}\frac{\mu_{+R}^{(l)}(k_0)\mathbf{B}_j}{j!l!}\nonumber \\
&=\sum_{\overset{j+v+l=n}{j,v,l\geqslant0}}\frac{\Theta^{(v)}(k_0)\mu_{+R}^{(l)}(k_0)\mathbf{B}_j}{j!v!l!}\nonumber.
\end{align}

    Similarly, we can derive equation \eqref{323}.
    \end{proof}

    Let $k_0$ be the zero of $\det[a(k)]$ with multiplicity $m+1$, then $\frac1{\det[a(k)]}$ has a Laurent series expansion at $k=k_0$,
    \begin{equation}\nonumber
    \frac1{\det[a(k)]}=\frac{a_{-m-1}}{(k-k_0)^{m+1}}+\frac{a_{-m}}{(k-k_0)^m}+\cdots+\frac{a_{-1}}{k-k_0}+O(1),\quad k\to k_0,
\end{equation}
    where $a_{-m-1}\neq0$ and $a_{-n-1}=\frac{\tilde{a}^{(m-n)}(k_0)}{(m-n)!}$, $\tilde{a}(k)=\frac{(k-k_0)^{m+1}}{\det[a(k)]}$, $n=0,\ldots,m$. In conjunction with
     Corollary \ref{24}, it follows that for every $n\in\{0,\ldots,m\}$,
    \begin{equation}\nonumber
    \begin{aligned}&\operatorname*{Res}_{k_0}(k-\bar{k}_0)^n\mu_{-L}(x,t;k)a^{-1}(k)\\&=\sum_{j+v+l+s=m-n\atop j,v,l,s\geqslant0}\frac{\tilde{a}^{(j)}(k_0)\Theta^{(l)}(x,t;k_0)
    \mu_{+\mathrm{R}}^{(s)}(x,t;k_0)\mathbf{B}_v}{j!v!l!s!},\end{aligned}
\end{equation}
    \begin{equation}\nonumber
    \begin{aligned}&\operatorname*{Res}_{\bar{k}_0}(k-\bar{k}_0)^n\mu_{-R}(x,t;k)\bar{a}^{-1}(\bar{k})\\&=-\sum_{j+v+l+s=m-n\atop j,v,l,s\geqslant0}
    \frac{\overline{\tilde{a}^{(j)}(k_0)\Theta^{(l)}(x,t;k_0)}\mu_{+L}^{(s)}(x,t;\bar{k}_0)\bar{\mathbf{B}}_v}{j!v!l!s!}.\end{aligned}
\end{equation}
    Introduce a symmetric matrix-valued polynomial of up to m degrees, expressed as:
    \begin{equation}\nonumber
    f_0(k)=\sum_{h=0}^m\sum_{j+k=h\atop j,v\geqslant0}\frac{\tilde{a}^{(j)}(k_0)\mathbf{B}_v}{j!v!}(k-k_0)^h.
\end{equation}
    It is evident that $f_0(k_0)\neq0$, therefore,
    \begin{equation}\label{329}
    \begin{aligned}
\text{Res }(k-k_0)^n\mu_{-\text{L}}(x,t;k)a^{-1}(k)& =\sum_{h+l+s=m-n\atop h,l,s\geqslant0}\frac{\Theta^{(l)}(x,t;k_0)\mu_{+\mathrm{R}}^{(s)}(x,t;k_0)f_0^{(h)}(k_0)}{h!l!s!} \\
&=\frac{[e^{2i\theta(x,t;k)}\mu_{+\mathrm{R}}(x,t;k)f_0(k)]^{(m-n)}|_{k=k_0}}{(m-n)!}.
\end{aligned}
\end{equation}
    Furthermore,
    \begin{equation}\label{330}
    \mathrm{Res~}(k-\bar{k}_0)^n\mu_{-\mathrm{R}}(x,t;k)\bar{a}^{-1}(\bar{k})=-\frac{[e^{-2i\theta(x,t;k)}\mu_{+L}(x,t;k)f_0^\dagger(\bar{k})]^{(m-n)}|_{k=\bar{k}_0}}{(m-n)!}.
\end{equation}
    We shall name $f_0(k_0),\ldots,f_0^{(m)}(k_0)$ as the residue constants at the discrete spectrum $k_0$.

\begin{assumption}\label{assumption25}
    The initial data $q_0^0{(x)}$, $q_1^0{(x)}$ and $q_{-1}^0{(x)}$ for the Cauchy problem for the spin-1 GP equation \eqref{GP} generates generic scattering data
    in the sense that:
    \begin{enumerate}[(i)]
    			\item There are no spectral singularities, i.e., there exist a constant $c>0$ such that $|a(k)|\geq c$ for any $k\in\mathbb{R}$;
    			
                \item The discrete spectrum is simple, i.e., every zero of $a(k)$ in $\mathbb{C}^+$ is simple.

    		\end{enumerate}

    \end{assumption}
    \begin{proposition}
     There exists a unique symmetric matrix-valued polynomial $f(k)$ whose degree is less than $\mathcal{N}=\sum_{j=1}^N(m_j+1)$ and has the property $f(k_j)\neq0$ such
      that when $j=1,\ldots,N,n_j=0,\ldots,m_j,$;
    \begin{equation}\label{331}
    \operatorname*{Res}_{k_j}\left(k-k_j\right)^{n_k}\mu_{-L}(x,t;k)a^{-1}(k)=\frac{[e^{2i\theta(x,t;k)}\mu_{+R}(x,t;k)f(k)]^{(m_j-n_j)}|_{k=k_j}}{(m_j-n_j)!},
\end{equation}
\begin{equation}\label{332}
    \operatorname*{Res}_{{\bar{k}_{j}}} (k-\bar{k}_{j})^{n_{k}}\mu_{-\mathrm{R}}(x,t;k)\bar{a}^{-1}(\bar{k})=
    -\frac{[e^{-2i\theta(x,t;k)}\mu_{+L}(x,t;k)f^{\dagger}(\bar{k})]^{(m_{j}-n_{j})}|_{k=\bar{k}_{j}}}{(m_{j}-n_{j})!}.
\end{equation}
    \end{proposition}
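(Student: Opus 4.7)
The plan is to patch together the local polynomials produced at each discrete eigenvalue by the single-pole analysis of Corollary~\ref{24} (formulas \eqref{329}--\eqref{330}) into one global symmetric matrix-valued polynomial $f(k)$ via Hermite interpolation.

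First I would run the single-pole argument culminating in \eqref{329}--\eqref{330} separately at each zero $k_j$ of $\det[a(k)]$, which by Assumption \ref{assumption25} has multiplicity $m_j+1$. This produces, for each $j\in\{1,\ldots,N\}$, a symmetric matrix-valued polynomial $f_j(k)$ of degree at most $m_j$ with $f_j(k_j)\neq 0$ such that the residue identities \eqref{331}--\eqref{332} hold at $k_j$ with $f_j$ in place of $f$. At this stage the data are purely local: we have $N$ polynomials, each encoding the behavior at a single point.

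Next I would invoke Hermite interpolation to amalgamate these into a single polynomial. Specifically, I would seek a symmetric matrix-valued polynomial $f(k)$ of degree strictly less than $\mathcal{N}=\sum_{j=1}^{N}(m_j+1)$ subject to
\begin{equation}\nonumber
f^{(n)}(k_j)=f_j^{(n)}(k_j),\qquad j=1,\ldots,N,\ n=0,\ldots,m_j.
\end{equation}
This is a linear system of $\mathcal{N}$ scalar conditions (applied entrywise) in the $\mathcal{N}$ coefficients of a polynomial of degree less than $\mathcal{N}$; its coefficient matrix is a confluent Vandermonde matrix in the distinct nodes $k_1,\ldots,k_N$, hence invertible. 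Existence and uniqueness of $f$ follow. Because the interpolation operator is $\mathbb{C}$-linear and each $f_j$ is symmetric, $f$ is automatically symmetric.

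To verify \eqref{331}, I would observe that the right-hand side of \eqref{329} applied to $f$ depends on $f$ only through $f(k_j),\ldots,f^{(m_j)}(k_j)$, which coincide with $f_j(k_j),\ldots,f_j^{(m_j)}(k_j)$ by construction; hence \eqref{331} reduces to the already-established local identity. The identity \eqref{332} follows analogously from \eqref{330}, the exchange $\mathbf{B}_v\leftrightarrow\mathbf{B}_v^\dagger$ in Corollary \ref{24} being precisely what matches $f^\dagger(\bar k)$ on the right. Finally $f(k_j)=f_j(k_j)\neq 0$ by construction. The main technical point, though essentially bookkeeping, is extracting the Hermite data $f_j^{(n)}(k_j)$ cleanly from \eqref{329}--\eqref{330}: one expands $e^{2i\theta(x,t;k)}\mu_{+R}(x,t;k)$ in Taylor series around $k_j$, matches powers of $(k-k_j)$, and verifies that the resulting numbers $f_j^{(n)}(k_j)$ are independent of $(x,t)$ so that $f$ is genuinely a polynomial in $k$ alone — this last point is a consequence of the $(x,t)$-independence of the residue constants $\mathbf{B}_v$ in Proposition \ref{prop2.3}.
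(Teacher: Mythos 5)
Your proposal follows essentially the same route as the paper: construct a local symmetric polynomial $f_j$ of degree at most $m_j$ at each zero $k_j$ via the single-pole analysis of \eqref{329}--\eqref{330}, then amalgamate them by Hermite interpolation into a unique symmetric matrix polynomial of degree less than $\mathcal{N}$, noting that the residue identities only see the $m_j$-jet of $f$ at $k_j$. The only (harmless) slip is attributing the multiplicities $m_j+1$ to Assumption \ref{assumption25}, which actually asserts simplicity; the proposition itself is stated for general multiplicities, and your argument handles that case correctly.
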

    \begin{proof} It's similar to equation \eqref{329} and \eqref{330}, for each $j\in\{1,\ldots,N\}$, a not greater than the value of $m_{j}$ symmetric matrix
    polynomial $f(k)$, including $f(k_j)\neq0$, makes when $n_j=0,\ldots,m_j$;
    \begin{equation}\label{333}
    \operatorname*{Res}_{k_j}\left(k-k_j\right)^{n_j}\mu_{-L}(x,t;k)a^{-1}(k)=\frac{[e^{2i\theta(x,t;k)}\mu_{+\mathrm{R}}(x,t;k)f(k_j)]^{(m_j-n_j)}|_{k=k_j}}{(m_j-n_j)!},
\end{equation}
\begin{equation}\label{334}
    \operatorname*{Res}_{{\bar{k}_{j}}}(k-\bar{k}_{j})^{{n_{j}}}\mu_{{-R}}(x,t;k)\bar{a}^{-1}(\bar{k})=
    -\frac{[e^{{-2i\theta(x,t;k)}}\mu_{{+L}}(x,t;k)f^{\dagger}(\bar{k_{j}})]^{{(m_{j}-n_{j})}}|_{{k=\bar{k}_{j}}}}{(m_{j}-n_{j})!}.
\end{equation}
    Using Hermite interpolation formula, there exists a unique symmetric matrix value polynomial $f(k)$ with degree less than $N$, which makes
    \begin{equation}\nonumber
    \begin{cases}f^{(n_1)}(k_1)=f_1^{(n_1)}(k_1),&n_1=0,\ldots,m_1,\\\vdots\\f^{(n_N)}(k_N)=f_N^{(n_N)}(k_N),&n_N=0,\ldots,m_N.\end{cases}
\end{equation}
    Hence, we have proven equation \eqref{331} and \eqref{332}.
    \end{proof}

    Let
    \begin{equation}\label{M}
    M(k;x,t)=\begin{cases}\left(\mu_{-L}(k)a^{-1}(k),\mu_{+R}(k)\right),&k\in\mathbb{C}_+,\\\left(\mu_{+L}(k),\mu_{-R}(k)\bar{a}^{-1}(\bar{k})\right),&k\in\mathbb{C}_-.
    \end{cases}\end{equation}

    We have $M(k;x,t)$ is analytic for $k\in\mathbb{C}\backslash\mathbb{R}$, there we only consider that the determinant of the matrix $a(k)$ has $N$ simple zeros,
     denoted by $k_1,\ldots,k_N$ , which are all in the upper half of the complex plane $\mathbb{C}^+$ and are not on the real axis. So we can get
    \begin{theorem}
     The piecewise-analytic function $M(k;x,t)$ determined by \eqref{M} satisfies the following RH problem \ref{RHP1}.
    \begin{RHP}\label{RHP1}
    		Find a matrix valued function $M(k)$ admits:
    		\begin{enumerate}[(i)]
    			\item Analyticity:~$M(k;x,t)$ is analytic in $k\in\mathbb{C}\setminus(\mathbb{R}\cup\mathcal{Z}\cup\bar{\mathcal{Z}})$, $\mathcal{Z}=\{k_j\}_{j=1}^N$;
    			\item Jump condition:
    			\begin{equation}M_+(k;x,t)=M_-(k;x,t)J(k;x,t),\quad k\in\mathbb{R},\end{equation}
    		\begin{equation}J=\begin{pmatrix}I_{2\times2}+\gamma^\dagger(\bar{k})\gamma(k)&\gamma^\dagger(\bar{k})e^{-2it\theta}\\\gamma(k)e^{2it\theta}&I_{2\times2}\end{pmatrix}
    ,\end{equation}
              where $\theta(k)=\frac xtk+2k^2,$ $\gamma(k)=b(k)a^{-1}(k)$.
                \item Residue conditions: $M(k;x,t)$ has simple poles at each point in $\mathcal{Z}\cup\bar{\mathcal{Z}}$

             \begin{equation}
    \operatorname*{Res}_{k=k_j}M(k)=\lim_{k\to k_j}M(k)\begin{pmatrix}0&0\\f(k_j)e^{2it\theta(k_j)}&0\end{pmatrix};
\end{equation}
                \begin{equation}
    \operatorname*{Res}_{k{=}\bar{k}}M(k)=\lim_{k\to \bar{k_j}}M(k)\begin{pmatrix}0&-f^\dagger(k_j)e^{-2it\theta(\bar{k_j})}\\0&0\end{pmatrix};
                 \end{equation}
    			\item Asymptotic behavior:
    \begin{align}
    				M(k;x,t)\rightarrow{I_4}~~as~~k\rightarrow\infty,
    			\end{align}
    		\end{enumerate}
    	\end{RHP}
     We have following reconstruction formula
    \begin{equation}\label{q1}
    q(x,t)=2i\lim_{k\to\infty}(kM(k;x,t))_{UR}\end{equation}
    \end{theorem}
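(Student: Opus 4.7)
The plan is to check the four clauses of RH Problem \ref{RHP1} in turn and then derive the reconstruction formula \eqref{q1} from a large-$k$ expansion of the Lax pair.

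For analyticity, I would appeal directly to Proposition \ref{Pro1}: $\mu_{-L}$ and $\mu_{+R}$ extend analytically to $\mathbb{C}^+$, while $\mu_{+L}$ and $\mu_{-R}$ extend analytically to $\mathbb{C}^-$. The factor $a^{-1}(k)$ extends meromorphically to $\mathbb{C}^+$ with simple poles at $\mathcal{Z}$ (by Assumption \ref{assumption25}), and $\bar{a}^{-1}(\bar{k})$ behaves symmetrically in $\mathbb{C}^-$ with poles at $\bar{\mathcal{Z}}$. Together with the normalizations $\mu_\pm\to I_4$ and $a(k)\to I_2$ as $k\to\infty$ inherited from the Volterra equation \eqref{Vol} and \eqref{218}, this settles conditions (i) and (iv).

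For the jump condition (ii), I would rewrite the scattering relation \eqref{S} column-by-column using the block decomposition \eqref{215}. Splitting $\mu_-=(\mu_{-L},\mu_{-R})$ produces two matrix identities expressing $\mu_{-L}$ and $\mu_{-R}$ as linear combinations of the columns of $\mu_+$ twisted by $a(k)$, $b(k)$, $\bar{a}(\bar{k})$, $\bar{b}(\bar{k})$ and the exponentials $e^{\pm 2it\theta}$. Solving for $M_+=(\mu_{-L}a^{-1},\mu_{+R})$ in terms of $M_-=(\mu_{+L},\mu_{-R}\bar{a}^{-1})$, and invoking both the definition $\gamma=ba^{-1}$ and the unitarity-type identities listed after \eqref{215}, yields precisely the jump matrix $J$ stated in RH Problem \ref{RHP1}, with the $(1,1)$-block $I_2+\gamma^\dagger(\bar k)\gamma(k)$ arising from the ``squared'' scattering identity. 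Condition (iii) then follows from Assumption \ref{assumption25}(ii), which makes every zero of $\det a(k)$ simple, so that $m_j=0$ in the general residue formulas \eqref{331}-\eqref{332}; the Hermite interpolation data collapse to the values $f(k_j)$, and those formulas become exactly the column-type residue relations stated in RH Problem \ref{RHP1} after writing $\operatorname{Res}_{k_j}\mu_{-L}(k)a^{-1}(k)$ in terms of $\mu_{+R}(k_j)$ (and likewise at $\bar{k}_j$ using \eqref{332}).

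Finally, for the reconstruction formula, I would insert the large-$k$ expansion $\mu_\pm(x,t;k)=I_4+k^{-1}\mu_\pm^{(1)}(x,t)+\mathcal{O}(k^{-2})$ into the spatial Lax equation in \eqref{Lax}, match the $\mathcal{O}(1)$ coefficients to get $U=-i[\sigma_4,\mu_\pm^{(1)}]$, and read the upper-right $2\times 2$ block to identify $q(x,t)=2i(\mu_\pm^{(1)})_{UR}$; since $a^{-1}(k)=I_2+\mathcal{O}(k^{-1})$ contributes only to the $UL$ block of $M$, the $UR$ block of $k(M(k)-I_4)$ converges to the same quantity, yielding \eqref{q1}. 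The main obstacle I anticipate is the book-keeping in the jump step: one must scrupulously track which off-diagonal block of $S(k)$ carries $e^{2it\theta}$ versus $e^{-2it\theta}$, and which Hermitian conjugate appears where, so that the jump matrix ends up with the entries $\gamma^\dagger(\bar k)e^{-2it\theta}$ and $\gamma(k)e^{2it\theta}$ in the asserted positions rather than in some transposed or conjugated variant.
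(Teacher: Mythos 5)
Your proposal is correct and follows essentially the same route as the paper, which derives the jump from the scattering relation \eqref{S}, reads the residue conditions off \eqref{331}--\eqref{332} in the simple-zero case of Assumption \ref{assumption25}, and obtains \eqref{q1} from the large-$k$ expansion (the paper defers the details to \cite{boo-27}). One small slip: matching the $\mathcal{O}(1)$ terms in $\mu_x=-ik[\sigma_4,\mu]+U\mu$ gives $U=+i[\sigma_4,\mu^{(1)}]$ rather than $-i[\sigma_4,\mu^{(1)}]$; with the correct sign, $[\sigma_4,\mu^{(1)}]_{UR}=2\mu^{(1)}_{UR}$ yields exactly $q=2i(\mu^{(1)})_{UR}$ as you conclude.
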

   \begin{proof} By combining $M(k;x,t)$ and scattering relationship \eqref{S}, we can obtain the jump condition through simple calculations. According to the Vanishing
   Lemma, we found jump matrix $J(k;x,t)$ is positively definite, so the solution of the RH problem \ref{RHP1} is existent and unique. Notice that $M(k;x,t)$ has the
   asymptotic expansion
   \begin{equation}M(k;x,t)=I_{4\times4}+\frac{M_1(x,t)}k+\frac{M_2(x,t)}{k^2}+O(k^{-3}).\end{equation}
   From the asymptotic behavior of the functions $\mu_\pm(k)$ and $S(k)$, we have reconstruction formula, details for \cite{boo-27}
   \end{proof}

    The long-time asymptotic of RH problem \ref{RHP1} is affected by the growth and decay of the exponential function $e^{-2it\theta}$ appearing in the jump relation.
     So we need control the real part of $-2it\theta$. We introduce a new transform $M(k) \to M^{(1)}(k)$, which make that the $M^{(1)}(k)$ is well behaved as
      $t\to\infty$ along any characteristic line. Note that $\xi=x/t$, it is worth noting that the stationary point $k_0=-x/(4t)=-\frac{\xi}{4}$, which makes
       $\mathrm{d}\theta/\mathrm{d}k=\xi+4k=0$. To obtain asymptotic behavior of $e^{-2it\theta}$ as $t\to\infty$, we consider the real part of $-2it\theta$:
   \begin{equation}\label{sss}
   Re(2it\theta)=-8tImk(Re(k)-k_{0}). \end{equation}
   In order to use the $\bar{\partial}\text{-generalization}$ of the $\text{Deift-Zhou's}$ steepest descent method, we deform the contour of the RH problem and
    our main goal is to construct a model RH problem. After reorientation and extending, we obtain the long-time asymptotics of the solution to the Cauchy problem of
     the spin-1 GP equation \eqref{GP}.

   \centerline{\begin{tikzpicture}[scale=0.7]
\path [fill=pink] (0,4)--(0,0) to (4,0) -- (4,4);
\path [fill=pink] (0,-4)--(0,0) to (-4,0) -- (-4,-4);
\draw[-][thick](-4,0)--(-3,0);
\draw[-][thick](-3,0)--(-2,0);
\draw[-][thick](-2,0)--(-1,0);
\draw[-][thick](-1,0)--(0,0);
\draw[-][thick](0,0)--(1,0);
\draw[-][thick](1,0)--(2,0);
\draw[-][thick](2,0)--(3,0);
\draw[->][thick](3,0)--(4,0)[thick]node[right]{$Rek$};
\draw[<-][thick](0,4)[thick]node[right]{$Imk$}--(0,3);
\draw[-][thick](0,3)--(0,2);
\draw[-][thick](0,2)--(0,1);
\draw[-][thick](0,1)--(0,0);
\draw[-][thick](0,0)--(0,1);
\draw[-][thick](0,1)--(0,2);
\draw[-][thick](0,2)--(0,3);
\draw[-][thick](0,3)--(0,4);
\draw[-][thick](0,0)--(0,-1);
\draw[-][thick](0,-1)--(0,-2);
\draw[-][thick](0,-2)--(0,-3);
\draw[-][thick](0,-3)--(0,-4);
\draw[fill] (2,1.5)node[below]{\small{$|e^{it\theta(k)}|\rightarrow0$}};
\draw[fill] (-2.5,-1.5)node[below]{\small{$|e^{it\theta(k)}|\rightarrow0$}};
\draw[fill] (2,-1.5)node[below]{\small{$|e^{-it\theta(k)}|\rightarrow0$}};
\draw[fill] (-2.5,1.5)node[below]{\small{$|e^{-it\theta(k)}|\rightarrow0$}};
\draw[fill] (0,0)node[below right]{\small{$k_{0}$}};
\label{Figure 1.}
\end{tikzpicture}}
\centerline{\noindent {\small \textbf{Figure 2.3} Exponential decaying domains.}}

   \section{Asymptotic analysis in the region $\xi\neq0$}\label{s:3}

    \subsection{\it{Modifications to the basic RH problem}}\label{s:3.1}
   First of all, based on the decay regions as shown in Figure \ref{Figure 1.}, we need to decompose the jump matrix $J(k)$ into upper and lower triangular matrix.
   We therefore introduce a scalar function $\delta_1(k)$ and $\delta_2(k)$ satisfying the RH problem as follows.
   \begin{equation}\label{k1}
   \begin{cases}&\delta_{1+}(k)=\delta_{1-}(k)(I_{2\times2}+\gamma(k)\gamma^\dagger(\bar{k})),\quad k\in(-\infty,k_0),\\&\delta_1(k)\to I_{2\times2},\quad k\to\infty,
   \end{cases}\end{equation}
   and
   \begin{equation}\label{k2}
   \begin{cases}&\delta_{2+}(k)=(I_{2\times2}+\gamma^\dagger(\bar{k})\gamma(k))\delta_{2-}(k),\quad k\in(-\infty,k_0),\\&\delta_2(k)\to I_{2\times2},\quad k\to\infty.
   \end{cases}\end{equation}
   The solutions of the above two RH problems exist and are unique because of Vanishing Lemma \cite{boo-59}. From the uniqueness, we have the symmetry relations
   $\delta_j^{-1}(k)=\delta_j^\dagger(\bar{k}),(j=1,2)$. Then a simple calculation shows that for $j=1,2$,
   \begin{equation}\nonumber
   |\delta_{j+}|^2=\begin{cases}2+|\gamma(k)|^2,\quad k\in(-\infty,k_0),\\2,\quad k\in(k_0,+\infty),&\end{cases}\end{equation}
   \begin{equation}\nonumber
   \left|\delta_{j-}\right|^2=\begin{cases}2-\frac{2|\det\gamma(k)|^2+|\gamma(k)|^2}{1+|\det\gamma(k)|^2+|\gamma(k)|^2},\quad k\in(-\infty,k_0),\\2,\quad k\in(k_0,+\infty).
   &\end{cases}\end{equation}
   Hence, by the maximum principle, we have for $j=1,2$,
   \begin{equation}\nonumber
   |\delta_j(k)|\leqslant\mathrm{const}<\infty,\quad k\in\mathbb{C}.\end{equation}
   However, $\delta_1(k)$ and $\delta_2(k)$ can not be found in explicit form because they satisfy the matrix RH problems \eqref{k1} and \eqref{k2}.
   If we consider the determinants of the two RH problems, they become the same scalar RH problem
   \begin{equation}\nonumber
   \begin{cases}\det\delta_+(k)=(1+|\gamma(k)|^2+|\det\gamma(k)|^2)\det\delta_-(k),\quad k\in(-\infty,k_0),\\\det\delta(k)\to1,\quad k\to\infty.\end{cases}\end{equation}
   which can be solved by the Plemelj formula \cite{boo-59}
   \begin{equation}\nonumber
   \det\delta(k)=(k-k_0)^{i\nu}e^{\chi(k)},\end{equation}
   where
   \begin{equation}\nonumber
   \nu=-\frac1{2\pi}\log(1+\left|\gamma(k_0)\right|^2+\left|\det\gamma(k_0)\right|^2),\nonumber\end{equation}
   \begin{equation}\nonumber
   \begin{gathered}
   \chi\left(k\right) =\frac{1}{2\pi i}\Big(\int_{k_0-1}^{k_0}\log\left(\frac{1+|\gamma(\xi)|^2+|\det\gamma(\xi)|^2}{1+|\gamma(k_0)|^2+|\det\gamma(k_0)|^2}\right)
   \frac{\mathrm{d}\xi}{\xi-k} \\
   +\int_{-\infty}^{k_0-1}\log\left(1+\left|\gamma\left(\xi\right)\right|^2+\left|\det\gamma\left(\xi\right)\right|^2\right)\frac{\mathrm{d}\xi}{\xi-k} \\
   -\log(1+|\gamma(k_0)|^2+|\det\gamma(k_0)|^2)\log(k-k_0+1)\Big).\nonumber
   \end{gathered}\end{equation}
   For brevity, we denote
   \begin{equation}\nonumber
    \Delta_{k_{0}}^{+}=\{j\in\{1,\cdots,N\} |Re(k_{j})>k_{0}\}, \Delta_{k_{0}}^{-}=\{j\in\{1,\cdots,N\} |Re(k_{j})<k_{0}\},\\\end{equation}
    for the real interval $\mathcal I=[a,b]$, define
    \begin{equation}\nonumber
    \begin{gathered}
    \mathcal{Z}(\mathcal{I})=\{k_{j}\in\mathcal{Z}:Rek_{j}\in\mathcal{I}\},\\\mathcal{Z}^{-}(\mathcal{I})=\{k_{j}\in\mathcal{Z}:Rek_{j}<a\},\\
    \mathcal{Z}^{+}(\mathcal{I})=\{k_{j}\in\mathcal{Z}:Rek_{j}>b\}.
    \end{gathered}\end{equation}
   For $k_0\in\mathcal{I}$, define
   \begin{equation}\nonumber
   \begin{gathered}
    \Delta_{k_0}^-(\mathcal{I})=\{j\in\{1,2,\cdots,N\}:a\leqslant Rek_{j}<k_0\}, \\
    \Delta_{k_0}^+(\mathcal{I})=\{j\in\{1,2,\cdots,N\}:k_0<Rek_j\leqslant b\}.
   \end{gathered}\end{equation}
   Let's introduce the function
   \begin{equation}\nonumber
    T_{j}(k)=\prod_{j\in\Delta_{k_0}^-}\frac{k-\bar{k}_j}{k-k_j}\delta_{j}(k),T^{'}(k)=
    \prod_{j\in\Delta_{k_0}^-}\frac{k-\bar{k}_j}{k-k_j}\det\delta(k),T_0(k_0)=\prod_{j\in\Delta_{k_0}^-}\left(\frac{k_0-\bar{k}_j}{k_0-k_j}\right)e^{\chi(k_{0})}.
    \end{equation}
    \begin{proposition} The matrix function $T_{j}(k)$ and scalar function $T_0(k_0)$ satisfy the following properties:
    \begin{enumerate}[(i)]
    			\item $T_{j}(k)$ is analytic in $\mathbb{C}\setminus(-\infty,k_0]$;
    			\item for $\mathbb{C}\setminus(-\infty,k_0]$, $T_{j}(k)(T_{j})^\dagger(\bar{k}) = I$;
                \item for $k\in(-\infty,k_0]$, $T_{1+}(k)=T_{1-}(k)(I_{2\times2}+\gamma(k)\gamma^\dagger(\bar{k}))$,
                $T_{2+}(k)=(I_{2\times2}+\gamma^\dagger(\bar{k})\gamma(k))T_{2-}(k)$;
    			\item for $|k|\to\infty, |\arg(k)|\leqslant c<\pi,$
        \begin{equation}\nonumber
              T^{'}(k)=1+\frac ik2\sum_{j\in\Delta_\xi^-}Imk_k-\frac1{2ki\pi}\int_{-\infty}^{k_0}\log({1+|\gamma(s)|^2+|\det\gamma(s)|^2})ds+O(k^2);
           \end{equation}
                \item Along the ray $k=k_0+e^{i\phi}\mathbb{R}_+, |\phi|\leqslant c<\pi,$
     \begin{equation}\nonumber
    |T^{'}(k)-T_0(k_0)(k-k_0)^{i\nu(k_0)}|\leqslant C\parallel \gamma\parallel_{H^1(\mathbb{R})}|k-k_0|^{1/2},\quad k\to k_0.
     \end{equation}
    		\end{enumerate}

     \end{proposition}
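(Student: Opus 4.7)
The plan is to dispatch parts (i)--(iii) quickly using the properties of $\delta_j(k)$ coming from the RH problems \eqref{k1}, \eqref{k2} and elementary facts about the Blaschke product $B(k)=\prod_{j\in\Delta_{k_0}^-}(k-\bar k_j)/(k-k_j)$. Since $B(k)$ is rational with possible poles only at the points $k_j\in\mathbb{C}^+$ and is analytic across $\mathbb{R}$, it contributes no jump on $(-\infty,k_0]$, so the jump relation (iii) transfers directly from the RH problems for $\delta_1$ and $\delta_2$, while analyticity off $(-\infty,k_0]$ for (i) is inherited from $\delta_j$. For (ii), I would combine the symmetry $\delta_j^{-1}(k)=\delta_j^\dagger(\bar k)$ (a standard consequence of RH uniqueness, already noted above) with the Blaschke identity $\overline{B(\bar k)}=B(k)^{-1}$ to assemble $T_j(k)T_j^\dagger(\bar k)=I$ on the domain of analyticity.

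For (iv), I would insert the explicit scalar formula $\det\delta(k)=(k-k_0)^{i\nu}e^{\chi(k)}$ into $T'(k)=B(k)\det\delta(k)$ and expand each factor as $|k|\to\infty$. Each Blaschke factor contributes $1-\tfrac{2i\,\mathrm{Im}(k_j)}{k}+O(k^{-2})$, so $B(k)=1-\tfrac{2i}{k}\sum_{j\in\Delta_\xi^-}\mathrm{Im}\,k_j+O(k^{-2})$; the power $(k-k_0)^{i\nu}$ is $1+O(k^{-1}\log k)$ on the indicated sector; and a geometric expansion $(\xi-k)^{-1}=-\tfrac{1}{k}-\tfrac{\xi}{k^2}-\cdots$ inside the Cauchy integrals defining $\chi(k)$ gives $\chi(k)=-\tfrac{1}{2\pi ik}\int_{-\infty}^{k_0}\log(1+|\gamma(s)|^2+|\det\gamma(s)|^2)\,ds+O(k^{-2})$ once the subtracted $\log(k-k_0+1)$ term is expanded and the boundary contributions cancel. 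Multiplying the three expansions then yields the stated $1/k$ coefficient.

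Part (v) is the only step that calls for genuine analysis, and it is where I expect the main obstacle. Writing $T'(k)-T_0(k_0)(k-k_0)^{i\nu}=(k-k_0)^{i\nu}\bigl(B(k)e^{\chi(k)}-B(k_0)e^{\chi(k_0)}\bigr)$, the estimate reduces to showing that $B(k)e^{\chi(k)}$ is H\"older-$1/2$ at $k_0$ with constant controlled by $\|\gamma\|_{H^1(\mathbb{R})}$. Since $B(k)$ is smooth near $k_0$ (because $k_0\neq\mathrm{Re}\,k_j$ for $j\in\Delta_{k_0}^-$) and $e^{\chi(k)}$ is uniformly bounded along the ray by the maximum principle bounds on $\delta_j$ recorded earlier, this further reduces to $|\chi(k)-\chi(k_0)|\leq C\|\gamma\|_{H^1}|k-k_0|^{1/2}$. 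The Sobolev embedding $H^1(\mathbb{R})\hookrightarrow C^{1/2}(\mathbb{R})$ yields $\log(1+|\gamma|^2+|\det\gamma|^2)\in C^{1/2}$, after which the classical Plemelj--Privalov estimate for Cauchy integrals of H\"older functions delivers the required bound on the first two integrals in the definition of $\chi$. The third, logarithmic, term is precisely the boundary correction designed to cancel the endpoint singularity at $\xi=k_0$ of the first integrand; verifying that this cancellation is uniform as $k\to k_0$ along the ray $k=k_0+e^{i\phi}\mathbb{R}_+$ with $|\phi|\leq c<\pi$, and tracking the resulting constants so as to pin down the explicit dependence on $\|\gamma\|_{H^1}$, is the delicate point of the argument.
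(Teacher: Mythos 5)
Your proposal follows essentially the same route as the paper: parts (i)--(iv) by direct computation from the definitions of $T_j$, $T'$ and $T_0$, and part (v) by bounding $|(k-k_0)^{i\nu(k_0)}|$ along the ray and reducing to the H\"older-$1/2$ estimate $|\chi(k)-\chi(k_0)|\leq c\|\gamma\|_{H^1}|k-k_0|^{1/2}$, which you (correctly) justify via $H^1\hookrightarrow C^{1/2}$ and the Plemelj--Privalov estimate. The only slip is a sign in the Blaschke expansion for (iv): $\frac{k-\bar k_j}{k-k_j}=1+\frac{2i\,\mathrm{Im}k_j}{k}+O(k^{-2})$, so the $1/k$ coefficient enters with a plus sign, as in the paper's statement.
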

   \begin{proof}Properties (i), (ii), (iii) and (iv) can be obtain by simple calculation from the definition of $T_{j}(k)$, $T^{'}(k)$ and $T_0(k_0)$. For (v), via fact that
   \begin{equation}\nonumber
    |(k-k_0)^{i\nu(k_0)}|=|e^{i\nu\log|k-k_0|-\nu\arg(k-k_0)}|\leqslant e^{-\pi\nu(k_0)}=\sqrt{1+|\gamma(k_0)|^2},
    \end{equation}
       it adduces that
       \begin{equation}\nonumber
    |\chi(k,k_{0})-\chi(k_{0},k_{0})|\leqslant c||\gamma(k)||_{H^{1}}|k-k_{0}|^{1/2}.
\end{equation}
    Then, the result follows promptly.
    \end{proof}

   Now we use $T_1(k)$ and $T_2(k)$ to define a new matrix function
   \begin{equation}\label{M1}
   M^{(1)}(k;x,t)=M(k;x,t)\Delta^{-1}(k),\end{equation}
   where
   \begin{equation}\Delta(k)=\begin{pmatrix}T_1(k)&0\\0&T_2^{-1}(k)\end{pmatrix}.\end{equation}
   $M^{(1)}(k;x,t)$ is a solution for the following RH problem.
    \begin{RHP}\label{RHP2}
    		Find a matrix valued function $M^{(1)}(k;x,t)$ admits:
    		\begin{enumerate}[(i)]
    			\item Analyticity:~$M^{(1)}(k;x,t)$ is analytic in $k\in\mathbb{C}\setminus(\mathbb{R}\cup\mathcal{Z}\cup\bar{\mathcal{Z}})$;
    			\item Jump condition:
    			\begin{equation}M_+^{(1)}(k;x,t)=M_-^{(1)}(k;x,t)J^{(1)}(k;x,t),\quad k\in\mathbb{R},\end{equation}
    		\begin{equation}\left.J^{(1)}(k;x,t)=
    \begin{cases}\begin{pmatrix}I_{2\times2}&T_{1-}(k)\gamma^\dagger(\bar{k})T_{2-}(k)e^{-2it\theta}\\0&I_{2\times2}\end{pmatrix}
    \begin{pmatrix}I_{2\times2}&0\\T_{2+}^{-1}(k)\gamma(k)T_{1+}^{-1}(k)e^{2it\theta}&I_{2\times2}\end{pmatrix},\quad k\in(-\infty,k_0),\\
    \begin{pmatrix}I_{2\times2}&0\\T_{2-}^{-1}(k)\rho^{\dagger}(\bar{k})T_{1-}^{-1}(k)e^{2it\theta}&I_{2\times2}\end{pmatrix}
    \begin{pmatrix}I_{2\times2}&T_{1+}(k)\rho(k)T_{2+}(k)e^{-2it\theta}\\0&I_{2\times2}\end{pmatrix},\quad k\in(k_0,+\infty);&\end{cases}\right.\end{equation}
    where
    \begin{equation}\rho(k)=\left(I_{2\times2}+\gamma^\dagger(\bar{k})\gamma(k)\right)^{-1}\gamma^\dagger(\bar{k}),\nonumber\end{equation}

    			\item Asymptotic behavior:
    			\begin{align}
    				M^{(1)}(k;x,t)\rightarrow{I_{4\times4}}~~as~~k\rightarrow\infty;
    			\end{align}
                \item Residue conditions: $M^{(1)}(k;x,t)$ has simple poles at each point in $\mathcal{Z}\cup\bar{\mathcal{Z}}$

                For $j\in\Delta_{k_0}^-$
                \begin{equation}
    \operatorname*{Res}_{k=k_j}M^{(1)}(k)=\lim_{k\to k_j}M^{(1)}(k)\begin{pmatrix}0&[({T_1}^{-1})'(k_j)]^{-1}f^{-1}(k_j)[({T_2}^{-1})'(k_j)]^{-1}e^{-2it\theta(k_j)}
    \\0&0\end{pmatrix};
                 \end{equation}
               \begin{equation}
    \operatorname*{Res}_{k{=}\bar{k}}M^{(1)}(k)=\lim_{k\to \bar{k_j}}M^{(1)}(k)\begin{pmatrix}0&0\\
    -[{T_2}'(\bar{k_j})]^{-1}(f^\dagger(k_j))^{-1}[{T_1}'(\bar{k_j})]^{-1}e^{2it\theta(\bar{k_j})}&0\end{pmatrix};
\end{equation}

                For $j\in\Delta_{k_0}^+$
                \begin{equation}
    \operatorname*{Res}_{k=k_j}M^{(1)}(k)=\lim_{k\to k_j}M^{(1)}(k)\begin{pmatrix}0&0\\{T_2}^{-1}(k_j)f(k_j){T_1}^{-1}(k_j)e^{2it\theta(k_j)}&0\end{pmatrix};
\end{equation}
                \begin{equation}
    \operatorname*{Res}_{k{=}\bar{k}}M^{(1)}(k)=\lim_{k\to \bar{k_j}}M^{(1)}(k)\begin{pmatrix}0&-T_1(\bar{k_j})f^\dagger(k_j)T_2(\bar{k_j})e^{-2it\theta(\bar{k_j})}
    \\0&0\end{pmatrix};
                 \end{equation}

    		\end{enumerate}
    	\end{RHP}
    Since $\Delta^{-1}(k)\to I$, $k\to\infty $, the relation between the solution of the spin-1 GP equation and the solution of the RH problem is
    \begin{equation}q(x,t)=2i\lim_{k\to\infty}(kM^{(1)}(k;x,t))_{UR}.\end{equation}
     \subsection{\it{Transformation to a hybrid $\bar{\partial}\text{-problem}$}}\label{s:3.2}
     Next, we make continuous extension for the jump matrix $J^{(1)}(k;x,t)$ to remove the jump from $\mathbb{R}$. Denote lines $\Sigma_{i}$ and domains
     $\Omega_{jl}$, $i = 1, 2, 3, 4$ as shown in Figure \ref{Figure 2.}. And $\Sigma^{(1)}=\Sigma_1\cup\Sigma_2\cup\Sigma_3\cup\Sigma_4$.
     The key is to construct the matrix function $R^{(2)}(k)$. We need to eliminate jumps on $\mathbb{R}$  and the new analytic jump matrix has the expected
     exponential decay along the contour $\Sigma^{(1)}$. The norm of $R^{(2)}(k)$ should be controlled to ensure that the long-time asymptotic behavior
     of $\overline{\partial}\text{-contribution}$ to the solution  is negligible. Now we introduce a new matrix $M^{(2)}(k)$ such that the jump contour of
     the RH problem \ref{RHP2} is transformed from $\mathbb{R}$ to $\Sigma^{(1)}$.
\\

     \centerline{\begin{tikzpicture}[scale=0.7]
\path [fill=pink] (-5,4)--(-4,4) to (-4,4) -- (0,0);
\path [fill=pink] (-5,4)--(-4,4) to (-4,4) -- (-4,3);
\path [fill=pink] (-4,3)--(-5,3) to (-5,3) -- (-5,4);
\path [fill=pink] (0,0)--(-4,-4) to (-4,-4) -- (-5,-4);
\path [fill=pink] (-5,-4)--(-5,0) to (-5,0) -- (-5,-4);
\path [fill=pink] (-5,-3)--(-5,3) to (-5,3) -- (-4,3);
\path [fill=pink] (-4,3)--(-4,-3) to (-4,-3) -- (-5,-3);
\path [fill=pink] (-5,-4)--(-4,-4) to (-4,-4) -- (-4,-3);
\path [fill=pink] (-4,-3)--(-5,-3) to (-5,-3) -- (-5,-4);
\path [fill=pink] (-5,4)--(-4,4) to (-4,4) -- (0,0);
\path [fill=pink] (-4,-4)--(0,0) to (0,0) -- (-4,0);
\path [fill=pink] (-4,0)--(0,0) to (0,0) -- (-4,4);
\path [fill=pink] (5,4)--(4,4) to (4,4) -- (0,0);
\path [fill=pink] (0,0)--(4,-4) to (4,-4) -- (5,-4);
\path [fill=pink] (5,-4)--(5,0) to (5,0) -- (5,4);
\path [fill=pink] (5,4)--(4,4) to (4,4) -- (4,3);
\path [fill=pink] (4,3)--(5,3) to (5,3) -- (5,4);
\path [fill=pink] (4,3)--(5,3) to (5,3) -- (5,-3);
\path [fill=pink] (5,-3)--(4,-3) to (4,-3) -- (4,3);
\path [fill=pink] (4,-3)--(5,-3) to (5,-3) -- (5,-4);
\path [fill=pink] (5,-4)--(4,-4) to (4,-4) -- (4,-3);
\path [fill=pink] (0,0)--(4,4) to (4,4) -- (4,0);
\path [fill=pink] (4,0)--(4,-4) to (4,-4) -- (0,0);
\draw [thick](-4,-4)--(4,4);
\draw [thick](-4,4)--(4,-4);
\draw[fill] (0,0)node[below]{$k_{0}$};
\draw[fill] (0,-1)node[below]{$\Omega_{5}$};
\draw[fill] (0,1.5)node[below]{$\Omega_{2}$};
\draw[fill] (1.5,-0.5)node[below]{$\Omega_{6}$};
\draw[fill] (1.5,1)node[below]{$\Omega_{1}$};
\draw[fill] (-1.5,1)node[below]{$\Omega_{3}$};
\draw[fill] (-1.5,-0.5)node[below]{$\Omega_{4}$};
\draw[fill] (-6,2)node[below]{$\mathcal {R}^{(2)}=U_{R}^{-1}$};
\draw[fill] (4,2)node[below]{$\mathcal {R}^{(2)}=W_{R}^{-1}$};
\draw[fill] (4.5,-2)node[above]{$\mathcal {R}^{(2)}=W_{L}$};
\draw[fill] (-6,-2)node[below]{$\mathcal {R}^{(2)}=U_{L}$};
\draw[fill] (4,2.5)node[below]{} circle [radius=0.05];
\draw[fill] (4,-2.5)node[below]{} circle [radius=0.05];
\draw[fill] (0,2.5)node[below]{} circle [radius=0.05];
\draw[fill] (0,-2.5)node[below]{} circle [radius=0.05];
\draw[fill] (2,3)node[below]{} circle [radius=0.05];
\draw[fill] (2,-3)node[below]{} circle [radius=0.05];
\draw[fill] (-2.5,2)node[below]{} circle [radius=0.05];
\draw[fill] (-2.5,-2)node[below]{} circle [radius=0.05];
\draw[fill] (-4,3)node[below]{} circle [radius=0.05];
\draw[fill] (-4,-3)node[below]{} circle [radius=0.05];
\draw[fill] (-2.5,2)node[below]{${k}_{j}$};
\draw[fill] (-2.5,-2)node[left]{$\bar{{k}_{j}}$};
\draw[fill] (4.5,4)node[below]{$\Sigma_{1}$};
\draw[fill] (4.5,-4)node[below]{$\Sigma_{4}$};
\draw[fill] (-4.5,4)node[below]{$\Sigma_{2}$};
\draw[fill] (-4.5,-4)node[below]{$\Sigma_{3}$};
\draw[-][thick][dashed](-6,0)--(-5,0);
\draw[-][thick][dashed](-5,0)--(-4,0);
\draw[-][thick][dashed](-4,0)--(-3,0);
\draw[->][thick][dashed](-3,0)--(-2,0);
\draw[-][thick][dashed](-2,0)--(-1,0);
\draw[-][thick][dashed](-1,0)--(0,0);
\draw[-][thick][dashed](0,0)--(1,0);
\draw[-][thick][dashed](1,0)--(2,0);
\draw[->][thick][dashed](2,0)--(3,0);
\draw[-][thick][dashed](3,0)--(4,0);
\draw[-][thick][dashed](4,0)--(5,0);
\draw[->][thick][dashed](5,0)--(6,0)[thick]node[right]{$Rek$};
\end{tikzpicture}}\label{Figure 2.}
\centerline{\noindent {\small \textbf{Figure 3.2}  Definition of $R^{(2)}$ in different domains.}}

     \begin{equation}\label{M2}
     M^{(2)}(k)=M^{(1)}(k)R^{(2)}(k),\end{equation}
     where
     \begin{equation}R^{(2)}(k)=\left\{\begin{array}{cc}\left(\begin{array}{cc}I_{2\times2}&0\\-R_1e^{2it\theta}&I_{2\times2}\end{array}\right)=W_R^{-1},&k\in\Omega_1,\\
                                                        \left(\begin{array}{cc}I_{2\times2}&-R_{3}e^{-2it\theta}\\0&I_{2\times2}\end{array}\right)=U_R^{-1},&k\in\Omega_{3},\\
                                                        \left(\begin{array}{cc}I_{2\times2}&0\\R_{4}e^{2it\theta}&I_{2\times2}\end{array}\right)=U_{L},&k\in\Omega_{4},\\
                                                        \left(\begin{array}{cc}I_{2\times2}&R_6e^{-2it\theta}\\0&I_{2\times2}\end{array}\right)=W_{L},&k\in\Omega_6,\\
                                                        \left(\begin{array}{cc}I_{2\times2}&0\\0&I_{2\times2}\end{array}\right),&k\in\Omega_2\cup\Omega_5.
                                                        \end{array}\right.\end{equation}
     And the matrices $R_j(j=1,3,4,6)$ such that the following proposition.

     \begin{proposition} The matrices $R_j$ have the following boundary values:

     \begin{equation}\nonumber\left.R_1(k)=\begin{cases}T_{2+}^{-1}(k)\gamma(k)T_{1+}^{-1}(k),&k\in(k_0,\infty),\\
     T_{0}^{-1}(k_0)\gamma(k_0)T_{0}^{-1}(k_0)(k-k_0)^{-2i\nu(k_0)}(1-\chi_{\mathcal{Z}}(k)),&k\in\Sigma_1,\end{cases}\right.\end{equation}
     \begin{equation}\nonumber\left.R_3(k)=\begin{cases}T_{1+}(k)\rho(k)T_{2+}(k),&k\in(-\infty,k_0),\\
     T_{0}(k_0)\rho(k_0)T_{0}^(k_0)(k-k_0)^{2i\nu(k_0)}(1-\chi_{\mathcal{Z}}(k)),&k\in\Sigma_2,\end{cases}\right.\end{equation}
     \begin{equation}\nonumber\left.R_4(k)=\begin{cases}T_{2-}^{-1}(k)\rho^{\dagger}(\bar{k})T_{1-}^{-1}(k),&k\in(-\infty,k_0),\\
     T_{0}^{-1}(k_0)\rho^{\dagger}({k_0}^*)T_{0}^{-1}(k_0)(k-k_0)^{-2i\nu(k_0)}(1-\chi_{\mathcal{Z}}(k)),&k\in\Sigma_3,\end{cases}\right.\end{equation}
      \begin{equation}\nonumber\left.R_6(k)=\begin{cases}T_{1-}(k)\gamma^\dagger(\bar{k})T_{2-}(k),&k\in(k_0,\infty),\\
      T_{0}(k_0)\gamma^\dagger({k_0}^*)T_{0}(k_0)(k-k_0)^{2i\nu(k_0)}(1-\chi_{\mathcal{Z}}(k)),&k\in\Sigma_4.\end{cases}\right.\end{equation}
     These matrices $R_j$ are estimated as follows
    \begin{equation}\label{R111}
    |R_j(k)|\lesssim \sin^2(\arg k)+\langle Rek\rangle^{-1/2},j=1,3,4,6,\end{equation}
    \begin{equation}\label{R1}
    |\bar{\partial}R_j(k)|\lesssim|\bar{\partial}\chi_{\mathcal{Z}}(k)|+|\gamma^{\prime}(Re(k))|+|k-k_0|^{-1/2}+|k-k_0|^{-1},
    \textit{ for all }k\in\Omega_j,j=1,3,4,6,\end{equation}
    \begin{equation}\nonumber
    \bar{\partial}R_j(k)=0,\quad k\in\Omega_2\cup\Omega_5, or \operatorname{dist}(k,\mathcal{Z}\cup\overline{\mathcal{Z}})\leqslant\rho/3.
    \end{equation}
    \end{proposition}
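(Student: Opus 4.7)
The plan is to construct the $R_j$ on each sector $\Omega_j$ as a smooth interpolation between the two prescribed boundary values listed in the proposition, cut off to vanish in neighborhoods of the discrete spectrum $\mathcal{Z} \cup \overline{\mathcal{Z}}$. Switching to polar coordinates $k = k_0 + re^{i\phi}$ centered at the stationary point, I would introduce a smooth cutoff $\cos^2(2\phi)$ (or a similar angle-dependent function that equals $1$ on the ray $\Sigma_j$ and $0$ on $\mathbb{R}$), together with a radial cutoff $\mathcal{X}_\rho(k) = 1-\chi_{\mathcal{Z}}(k)$ that is identically zero on discs of radius $\rho/3$ around each $k_j$ and $\bar k_j$. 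Since the $T_i$ are bounded on $\mathbb{C}$ and $\rho(k),\gamma(k)$ are Schwartz on $\mathbb{R}$, this interpolation produces a continuous extension with exactly the prescribed values on $\mathbb{R}$ and on $\Sigma^{(1)}$.

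Concretely, on $\Omega_1$ I will set
\begin{equation*}
R_1(k) = \bigl[f_1(k)\,T_{2+}^{-1}(k)\gamma(k)T_{1+}^{-1}(k) + (1-f_1(k))\,T_0^{-1}(k_0)\gamma(k_0)T_0^{-1}(k_0)(k-k_0)^{-2i\nu}\bigr]\mathcal{X}_\rho(k),
\end{equation*}
where $f_1(\phi)$ is a smooth function with $f_1(0)=1$, $f_1(\pi/4)=0$, supported so that the limits on $\mathbb{R}_{>k_0}$ and on $\Sigma_1$ match the two cases of the proposition; the analogous ansatz is used on $\Omega_3,\Omega_4,\Omega_6$. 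On $\Omega_2\cup\Omega_5$ set $R_j\equiv 0$, which trivially gives the last equation. The factor $\mathcal{X}_\rho$ forces the extension to vanish near each pole, so that after the transformation \eqref{M2} the residue structure of $M^{(1)}$ is inherited by $M^{(2)}$ without new singularities.

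For the pointwise bound \eqref{R111}, I would add and subtract the value at $k_0$: writing
\begin{equation*}
T_{2+}^{-1}(k)\gamma(k)T_{1+}^{-1}(k)-T_0^{-1}(k_0)\gamma(k_0)T_0^{-1}(k_0)(k-k_0)^{-2i\nu} = O(|k-k_0|^{1/2}) + O(\langle Re\, k\rangle^{-N}),
\end{equation*}
where the half-power is obtained from the Sobolev embedding $H^1(\mathbb{R})\hookrightarrow C^{1/2}$ applied to $\gamma$ together with property (v) of $T'(k)$, and the rapid decay in $Re\,k$ comes from $\gamma\in\mathscr{S}(\mathbb{R})$. Multiplying by $f_1$, which vanishes to second order at $\phi=\pi/4$ (hence gives the $\sin^2(\arg(k-k_0))$ factor), yields the stated estimate. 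For the $\bar\partial$ estimate \eqref{R1}, I compute $\bar\partial R_1$ using the Cauchy--Riemann operator in polar form $\bar\partial = \tfrac12 e^{i\phi}(\partial_r + ir^{-1}\partial_\phi)$. The $\partial_r$-piece acts on the real-line boundary value and produces $\gamma'(Re\,k)$ plus terms of order $|k-k_0|^{-1/2}$, while the angular derivative of $f_1$ together with the $r^{-1}$ factor yields the $|k-k_0|^{-1}$ contribution (controlled by the $\sin^2$ vanishing) and the derivative of $\mathcal{X}_\rho$ accounts for the $\bar\partial\chi_{\mathcal{Z}}$ term.

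The main obstacle will be obtaining the precise half-power $|k-k_0|^{-1/2}$ in the $\bar\partial$-estimate: one must verify that the interpolation really cancels the leading singular behaviour of $(k-k_0)^{\pm 2i\nu}$ so that the angular derivative does not blow up worse than $r^{-1}\sin^2\phi \cdot O(r^{1/2}) = O(r^{-1/2})$. This requires careful bookkeeping of the two boundary terms at $k_0$, using property (v) of $T'(k)$ and the Hermite interpolation / Lipschitz character of $\gamma$, and it is here that the generic spectral Assumption \ref{assumption25}(i) enters, since it keeps $T_{i\pm}$ bounded and $\rho(k)$ regular on $\mathbb{R}$. Once these estimates are secured uniformly in $t$, the construction of $R^{(2)}$ is complete and $M^{(2)}$ solves a hybrid $\bar\partial$-RH problem suitable for the subsequent analysis in Sect.~\ref{s:3.3}--\ref{s:3.4}.
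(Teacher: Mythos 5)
Your construction is essentially the paper's: the authors also define $R_1(k)=T_{2+}^{-1}(k)\{\gamma(\mathrm{Re}\,k)\cos(2\varphi)+[1-\cos(2\varphi)]g_1(k)\}T_{1+}^{-1}(k)(1-\chi_{\mathcal{Z}}(k))$ with $g_1$ the $\Sigma_1$-boundary value, compute $\bar\partial$ in the polar form $\tfrac12 e^{i\varphi}(\partial_\varrho+i\varrho^{-1}\partial_\varphi)$, and bound $|\gamma(\mathrm{Re}\,k)-g_1|$ via the $H^1$ Hölder estimate together with property (v) of $T'$. The only cosmetic difference is that the paper's interpolant $\cos(2\varphi)$ produces the $\sin^2$ term through $1-\cos(2\varphi)=2\sin^2\varphi$ rather than through second-order vanishing of the cutoff, and the $|k-k_0|^{-1}$ term in \eqref{R1} is simply kept (it need not be improved to $|k-k_0|^{-1/2}$), so the "main obstacle" you flag does not arise.
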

   \begin{proof} Since the functions $R_j$, j = 1, 3, 4, 6 have the same construction, we take $R_1$ for example.
   Denote $k=k_0+\varrho e^{i\alpha}$, for $k\in\Omega_{1}$, $\varrho=|k-k_0|$, $\alpha\in[0,\pi/4]$. Under the $(\varrho,\alpha)$-coordinate, the $\bar{\partial}$-derivative
    has the following representation
   \begin{equation}\nonumber\bar{\partial}=\frac12e^{i\alpha}(\partial_\varrho+i\varrho^{-1}\partial_\alpha).\end{equation}
   Define that
   \begin{equation}\nonumber
    g_1=T_0^{-2}(k_0)T_{2+}(k)\gamma(k_0)T_{1+}(k)(k-k_0)^{-2i\nu(k_0)},\quad k\in\bar{\Omega}_1,\nonumber
   \end{equation}

    \begin{equation}\nonumber
    \begin{aligned}
    R_1(k)& =T^{-1}_{2+}(k)\{\gamma(Rek)\cos(2\varphi)+[1-\cos(2\varphi)]g_1(k)\}T^{-1}_{1+}(k)(1-\chi_{\mathcal{Z}}(k))\\
    &=T^{-1}_{2+}(k)\{g_1(k)+[\gamma(Rek)-g_1(k)]\cos(2\varphi)\}T^{-1}_{1+}(k)(1-\chi_{\mathcal{Z}}(k)),
    \end{aligned}\end{equation}
    we have
    \begin{equation}\nonumber
     \begin{aligned}
    \bar{\partial}R_{1}=& -T^{-1}_{2+}(k)[r(Rek)\cos(2\varphi)+g_1(1-\cos(2\varphi))]T^{-1}_{1+}(k)\bar{\partial}\chi_{\mathcal{Z}}(k) \\
    &+T^{-1}_{2+}(k)\left[\frac12e^{i\varphi}\gamma^{\prime}(Rek)\cos(2\varphi)-ie^{i\varphi}\frac{(r(Rek)-g_1)\sin(2\varphi)}{|k-k_0|}\right]
    T^{-1}_{1+}(k)(1-\chi_{\mathcal{Z}}(k)).
  \end{aligned}
  \end{equation}
  Thus
  \begin{equation}\nonumber
    |\bar{\partial}R_1|\leqslant c_1\bar{\partial}\chi_{\mathcal{Z}}(k)+c_2|\gamma'(Rek)|+\frac{c_3|\gamma(Rek)-g_1|}{|k-k_0|}.
  \end{equation}

   The last item of the right is rewritten as
     \begin{equation}\nonumber
    |\gamma(Rek)-g_1|\leqslant|\gamma(Rek)-\gamma(k_0)|+|\gamma(k_0)-g_1|,
    \end{equation}
    we get
    \begin{equation}\nonumber
    |\gamma(\text{Re}k)-\gamma(k_0)|=\left|\int_{k_0}^{\text{Re}k}\gamma^{\prime}(s)ds\right|\leqslant||\gamma||_{H^1}|\text{Re}k-k_0|^{1/2}\leqslant c|k-k_0|^{1/2},
    \end{equation}
    \begin{align}
    |\gamma(k_{0})-g_1(k)|&=|\gamma(k_0)-T_0^{-2}(k_0)T_{2+}(k)\gamma(k_0)T_{1+}(k)(k-k_0)^{-2i\nu(k_0)}|\nonumber\\
     &\leqslant|T_{2+}(k)||T^{-1}_{2+}(k)\gamma(k_0)T^{-1}_{1+}(k)-T_0^{-2}(k_0)\gamma(k_0)(k-k_0)^{-2i\nu(k_0)}||T_{1+}(k)|\nonumber,
\end{align}
    we note that $\tilde{a}(k)=\prod_{j\in\Delta_{k_0}^-}\frac{k-\bar{k}_j}{k-k_j}$, so $T_{j}(k)=\tilde{a}(k)\delta_{j}(k)$,
    \begin{align}
    T^{-1}_{2+}(k)\gamma(k_0)T^{-1}_{1+}(k)& =\tilde{a}^{-2}(k)\delta^{-1}_{2+}(k)\gamma(k_0)\delta^{-1}_{1+}(k)\nonumber\\
     & =\tilde{a}^{-2}(k)\delta^{-1}_{2+}(k)\gamma(k_0)[\delta^{-1}_{1+}(k)-(\det\delta(k))^{-1}]+\tilde{a}^{-2}(k)\delta^{-1}_{2+}(k)\gamma(k_0)(\det\delta(k))^{-1}\nonumber\\
     & =f_1+\tilde{a}^{-2}(k)\delta^{-1}_{2+}(k)\gamma(k_0)(\det\delta(k))^{-1}\nonumber\\
     & =f_1+\tilde{a}^{-2}(k)[\delta^{-1}_{2+}(k)-(\det\delta(k))^{-1}]\gamma(k_0)(\det\delta(k))^{-1}+\tilde{a}^{-2}(k)(\det\delta(k))^{-2}\gamma(k_0)\nonumber\\
     & =f_1+f_2+\tilde{a}^{-2}(k)(\det\delta(k))^{-2}\gamma(k_0)\nonumber.
    \end{align}
    where $f_1=\tilde{a}^{-2}(k)\delta^{-1}_{2+}(k)\gamma(k_0)[\delta^{-1}_{1+}(k)-(\det\delta(k))^{-1}]$,
    $f_2=\tilde{a}^{-2}(k)[\delta^{-1}_{2+}(k)-(\det\delta(k))^{-1}]\gamma(k_0)(\det\delta(k))^{-1}$.

    So we can get
    \begin{align}|\gamma(k_{0})-g_1(k)|& \leqslant|T_{2+}(k)||T^{-1}_{2+}(k)\gamma(k_0)T^{-1}_{1+}(k)-T_0^{-2}(k_0)\gamma(k_0)(k-k_0)^{-2i\nu(k_0)}||T_{1+}(k)|\nonumber\\
    & \leqslant|T_{2+}(k)|[|f_1|+|f_2|+|a^{2}(k)(\det\delta(k))^{-2}\gamma(k_0)-T_0^{-2}(k_0)\gamma(k_0)(k-k_0)^{-2i\nu(k_0)}|]|T_{1+}(k)|\nonumber\\
    & \leqslant c(c_1+c_2+c_3||\gamma||_{H^1}|k-k_0|^{1/2})\nonumber.\end{align}

    Then \eqref{R1} follows immediately.
    \end{proof}

    $M^{(2)}(k)$ satisfies the following mixed $\bar{\partial}\text{-RH}$ problem.
    \begin{RHP}\label{RHP3}
    		Find a matrix valued function $$M^{(2)}(k)=M^{(2)}(k;x,t)$$ admits:
    		\begin{enumerate}[(i)]
    			\item ~$M^{(2)}(k;x,t)$ is continuous in $k\in\mathbb{C}\setminus(\mathbb{R}\cup\mathcal{Z}\cup\bar{\mathcal{Z}})$;
    			\item Jump condition:
    			\begin{equation}M_+^{(2)}(k)=M_-^{(2)}(k)V^{(2)}(k),\quad k\in\Sigma^{(2)},\end{equation}
              where the jump matrix
              \begin{equation}
             V^{(2)}(k)=(R_-^{(2)})^{-1}J^{(1)}R_+^{(2)}=I+(1-\chi_\mathcal{Z}(k))\delta V^{(2)},
            \end{equation}
    		\begin{equation}
    \delta V^{(2)}(k)=\begin{cases}\begin{pmatrix}0&0\\T_0(k_0)^{-2}\gamma(k_0)(k-k_0)^{-2i\nu(k_0)}e^{2it\theta}&0\end{pmatrix},\quad k\in\Sigma_1,\\\\
    \begin{pmatrix}0&T_0(k_0)^{2}\rho(k_0)(k-k_0)^{2i\nu(k_0)}e^{-2it\theta}\\0&0\end{pmatrix},\quad k\in\Sigma_2,\\\\\begin{pmatrix}0&0\\
    T_0(k_0)^{-2}\rho^{\dagger}(\bar{k}_0)(k-k_0)^{-2i\nu(k_0)}e^{2it\theta}&0\end{pmatrix},\quad k\in\Sigma_3,\\\\
    \begin{pmatrix}0&T_0(k_0)^{2}\gamma^\dagger(\bar{k}_0)(k-k_0)^{2i\nu(k_0)}e^{-2it\theta}\\0&0\end{pmatrix},\quad k\in\Sigma_4,&\end{cases}
\end{equation}
    where
    \begin{equation}\rho(k)=\left(I_{2\times2}+\gamma^\dagger(\bar{k})\gamma(k)\right)^{-1}\gamma^\dagger(\bar{k}).\nonumber\end{equation}

    			\item Asymptotic behavior:
    			\begin{align}
    				M^{(2)}(k;x,t)\rightarrow{I_{4\times4}}~~as~~k\rightarrow\infty.
    			\end{align}
                \item $\bar{\partial}-Derivative$: for $\mathbb{C}\setminus(\Sigma^{(2)}\cup\mathcal{Z}\cup\bar{\mathcal{Z}})$ we have
                \begin{equation}
                \bar{\partial}M^{(2)}(k)=M^{(2)}(k)\bar{\partial}R^{(2)}(k),
                \end{equation}
                where
                \begin{equation}\bar{\partial}R^{(2)}(k)
                =\left\{\begin{array}{cc}\left(\begin{array}{cc}0&0\\-\bar{\partial}R_1e^{2it\theta}&0\end{array}\right),&k\in\Omega_1,\\
                                                        \left(\begin{array}{cc}0&-\bar{\partial}R_{3}e^{-2it\theta}\\0&0\end{array}\right),&k\in\Omega_{3},\\
                                                        \left(\begin{array}{cc}0&0\\\bar{\partial}R_{4}e^{2it\theta}&0\end{array}\right),&k\in\Omega_{4},\\
                                                        \left(\begin{array}{cc}0&\bar{\partial}R_6e^{-2it\theta}\\0&0\end{array}\right),&k\in\Omega_6,\\
                                                        \left(\begin{array}{cc}0&0\\0&0\end{array}\right),&k\in\Omega_2\cup\Omega_5.
                                                        \end{array}\right.\end{equation}

                \item Residue conditions: $M^{(2)}(k;x,t)$ has simple poles at each point in $\mathcal{Z}\cup\bar{\mathcal{Z}}$

                For $j\in\Delta_{k_0}^-$
                \begin{equation}
    \operatorname*{Res}_{k=k_j}M^{(2)}(k)=
    \lim_{k\to k_j}M^{(2)}(k)\begin{pmatrix}0&[({T_1}^{-1})'(k_j)]^{-1}f^{-1}(k_j)[({T_2}^{-1})'(k_j)]^{-1}e^{-2it\theta(k_j)}\\0&0\end{pmatrix};
                 \end{equation}
               \begin{equation}
    \operatorname*{Res}_{k{=}\bar{k}}M^{(2)}(k)=\lim_{k\to \bar{k_j}}M^{(2)}(k)
    \begin{pmatrix}0&0\\-[{T_2}'(\bar{k_j})]^{-1}(f^\dagger(k_j))^{-1}[{T_1}'(\bar{k_j})]^{-1}e^{2it\theta(\bar{k_j})}&0\end{pmatrix};
\end{equation}

                For $j\in\Delta_{k_0}^+$
                \begin{equation}
    \operatorname*{Res}_{k=k_j}M^{(2)}(k)=\lim_{k\to k_j}M^{(2)}(k)\begin{pmatrix}0&0\\{T_2}^{-1}(k_j)f(k_j){T_1}^{-1}(k_j)e^{2it\theta(k_j)}&0\end{pmatrix};
\end{equation}
                \begin{equation}
    \operatorname*{Res}_{k{=}\bar{k}}M^{(2)}(k)=\lim_{k\to \bar{k_j}}M^{(2)}(k)
    \begin{pmatrix}0&-T_1(\bar{k_j})f^\dagger(k_j)T_2(\bar{k_j})e^{-2it\theta(\bar{k_j})}\\0&0\end{pmatrix}.
                 \end{equation}

    		\end{enumerate}
    	\end{RHP}
    The relation between the solution of the spin-1 GP equation and the solution of the RH problem is
    \begin{equation}\nonumber
    q(x,t)=2i\lim_{k\to\infty}(kM^{(2)}(x,t,k))_{UR}.
    \end{equation}
    To solve RH problem \ref{RHP3}, we decompose RH problem \ref{RHP3} into a pure RH problem with $\overline{\partial}R^{(2)}=0$ and a pure
    $\overline{\partial}$-problem with ${\overline{\partial}}R^{(2)}\neq0$. We express the decomposition as follows:
    \begin{equation}\nonumber
    M^{(2)}(k;x,t)=\begin{cases}\overline{\partial}R^{(2)}=0\to M_{RHP}^{(2)},\\\overline{\partial}R^{(2)}\neq0\to M^{(3)}=M^{(2)}M_{RHP}^{(2)-1},\end{cases}
    \end{equation}

    \subsection{\it{Asymptotic analysis on a pure RH problem}}\label{s:3.3}
     $M_{RHP}^{(2)}$ is the pure RH part of the mixed RH problem \ref{RHP3}, that is, it has the same jump line and residue conditions as $M^{(2)}$,
     but $\overline{\partial}R^{(2)}=0$. We call it the pure RH problem, which is described as follows:
     \begin{RHP}\label{RHP4}
    		Find a matrix valued function $M_{RHP}^{(2)}$ admits:
    		\begin{enumerate}[(i)]
    			\item ~$M_{RHP}^{(2)}(k;x,t)$ is continuous in $k\in\mathbb{C}\setminus(\mathbb{R}\cup\mathcal{Z}\cup\bar{\mathcal{Z}})$;
    			\item Jump condition:
    			\begin{equation}M_{+RHP}^{(2)}(k)=M_{-RHP}^{(2)}(k)V^{(2)}(k),\quad k\in\Sigma^{(2)};\end{equation}

    			\item Asymptotic behavior:
    			\begin{align}
    				M_{RHP}^{(2)}(k;x,t)\rightarrow{I_{4\times4}}~~as~~k\rightarrow\infty;
    			\end{align}
                \item $\overline{\partial}R^{(2)}=0;$

                \item Residue conditions: With the same residue conditions as $M^{(2)}$.

    		\end{enumerate}
    	\end{RHP}

    Denote
    \begin{equation}\nonumber
    \mathcal{U}_{k_0}=\{k:|k-k_0|<\rho/2\}.
    \end{equation}

    \begin{proposition} For the jump matrix $V^{(2)}(k)$, we have the following estimate
    \begin{equation}\nonumber
    ||V^{(2)}-I||_{L^\infty(\Sigma^{(2)})}=\begin{cases}\mathcal{O}(e^{-t\rho^2}),&k\in\Sigma^{(2)}\setminus\mathcal{U}_{k_0},\\c|k-k_0|^{-1}t^{-1/2},
    &k\in\Sigma^{(2)}\cap\mathcal{U}_{k_0}.\end{cases}
    \end{equation}
    \end{proposition}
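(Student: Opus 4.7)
The plan is to exploit the Gaussian decay of $e^{\pm 2it\theta}$ along the steepest descent rays $\Sigma_j$ emerging from $k_0$, combined with the uniform boundedness of the algebraic prefactors appearing in $\delta V^{(2)}$. The key input is the Taylor expansion $\theta(k)-\theta(k_0)=2(k-k_0)^2$, which follows from $\theta(k)=\xi k+2k^2$ and $\theta'(k_0)=0$, together with the parametrization $k=k_0+\varrho e^{i\alpha_j}$ on each ray with $\alpha_1=\pi/4,\ \alpha_2=3\pi/4,\ \alpha_3=-3\pi/4,\ \alpha_4=-\pi/4$.

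First I would check ray by ray that the exponent has the right sign. A direct computation gives
\begin{equation}\nonumber
\mathrm{Re}(\pm 2it\theta(k))=\mp 4t\varrho^2\sin(2\alpha),
\end{equation}
and in each of the four cases the sign of the exponent in $\delta V^{(2)}$ on $\Sigma_j$ matches $\alpha_j$ so that uniformly
\begin{equation}\nonumber
|e^{\pm 2it\theta(k)}|=e^{-4t\varrho^2},\qquad k\in\Sigma^{(2)}.
\end{equation}
Next I would bound the non-exponential factors. The constant $T_0(k_0)$ is finite, $\gamma(k_0)$ and $\rho(k_0)$ are bounded by the scattering assumptions (Assumption \ref{assumption25}), and since $\nu(k_0)\in\mathbb{R}$ with $\arg(k-k_0)\in\{\pm\pi/4,\pm 3\pi/4\}$ on $\Sigma^{(2)}$, the factor $(k-k_0)^{\pm 2i\nu(k_0)}$ has modulus $e^{\mp 2\nu\arg(k-k_0)}$, which is bounded. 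Multiplying these together produces the uniform estimate $|\delta V^{(2)}(k)|\lesssim e^{-4t\varrho^2}$.

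The two cases of the proposition are then immediate. On $\Sigma^{(2)}\setminus\mathcal{U}_{k_0}$ one has $\varrho\geqslant\rho/2$, so $e^{-4t\varrho^2}\leqslant e^{-t\rho^2}$, giving the first bound. On $\Sigma^{(2)}\cap\mathcal{U}_{k_0}$ I would apply the elementary inequality $ue^{-u^2}\leqslant C$ for $u\geqslant 0$ with $u=2\sqrt{t}\,\varrho$, which rearranges to
\begin{equation}\nonumber
e^{-4t\varrho^2}\leqslant \frac{c}{|k-k_0|\sqrt{t}},
\end{equation}
yielding the second bound. The only real subtlety is the bookkeeping on the signs in the first step: once one confirms that on every $\Sigma_j$ the exponent aligns with the orientation of $\alpha_j$ so that the Gaussian decay actually holds (rather than growth), the rest of the proof is just the two elementary size bounds above.
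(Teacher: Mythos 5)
Your proof is correct and follows essentially the same route as the paper: Gaussian decay $|e^{\pm 2it\theta}|=e^{-4t|k-k_0|^2}$ along the rays from the expansion $\theta=2(k-k_0)^2-2k_0^2$, boundedness of the constant prefactors and of $(k-k_0)^{\pm 2i\nu(k_0)}$, the bound $e^{-4t\varrho^2}\leq e^{-t\rho^2}$ off $\mathcal{U}_{k_0}$, and the elementary inequality $t^{1/2}|k-k_0|e^{-4t|k-k_0|^2}\leq c$ inside $\mathcal{U}_{k_0}$. The only cosmetic difference is that you verify the sign of the exponent on all four rays explicitly, whereas the paper works out $\Sigma_1$ and lets the other rays follow by symmetry.
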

   \begin{proof} On $\Sigma_{1}$, the jump line is $k-k_0=|k-k_0|e^{i\pi/4}$, and thus follows
   \begin{equation}\nonumber
    \theta=2(k-k_0)^2-2k_0^2=2i|k-k_0|^2-2k_0^2.
    \end{equation}
    Using \eqref{R111} and \eqref{sss}, it can be obtained
    \begin{equation}\nonumber
    |R_1e^{2it\theta(k)}|\leqslant|R_1|e^{-2t\text{Im}\theta(k)}\leqslant\left(\frac{1}{2}c_1+c_2\langle\text{Re}k\rangle^{-1/2}\right)e^{-4t|k-k_0|^2}.
    \end{equation}
    Note that
    \begin{equation}\nonumber
    \langle Rek\rangle^{-1/2}=\frac1{[1+(k_0+|k-k_0|e^{i\pi/4})^2]^{1/4}},\nonumber
    \end{equation}
    so we have
    \begin{equation}\nonumber
    \begin{aligned}&\langle Rek\rangle^{-1/2}\to\frac1{(1+k_0^2)^{1/4}},\quad k\to k_0,\\&\langle Rek\rangle^{-1/2}\to0,\quad k\to\infty.\end{aligned}\nonumber
    \end{equation}
    Thus $\langle Rek\rangle^{-1/2}\leqslant c.$

    For $k\in\Sigma^{(2)}\cap U_{k_0},$
    \begin{equation}\nonumber
     \begin{gathered}
      ||V^{(2)}-I||_{L^{\infty}(\Sigma^{(2)})} \leqslant ct^{-1/2}|k-k_{0}|^{-1}(t^{1/2}|k-k_{0}|e^{-4t|k-k_{0}|^{2}}) \\
     \leqslant c|k-k_{0}|^{-1}t^{-1/2}.
     \end{gathered}
      \end{equation}
     It can be seen that within $U_{k_0}$, the jump matrix $V^{(2)}$ decays to the identity matrix point by point.
     \end{proof}

     For $k\in\Sigma\cap\{|k-k_0|\geqslant\rho/2\}$
     \begin{equation}\nonumber
    ||V^{(2)}-I||_{L^\infty(\Sigma^{(2)})}\leqslant ce^{-4t|k-k_0|^2}\leqslant ce^{-t\rho^2}.
       \end{equation}

    This proposition inspire us to construct the solution $M_{RHP}^{(2)}(k)$ of the RH problem \ref{RHP4} in following form
    \begin{equation}\nonumber
    M_{RHP}^{(2)}(k)=\begin{cases}E(k)M^{(out)}(k),&\quad k\in\mathbb{C}\setminus\mathcal{U}_{k_0},\\E(k)M^{(LC)}(k)=E(k)M^{(out)}(k)M^{(SA)}(k),
    &\quad k\in\mathcal{U}_{k_0},\end{cases}
\end{equation}
    This decomposition splits $M_{RHP}^{(2)}(k)$ into two parts: $E(k)$ is a error function, which is a solution of a small-norm RH problem.
    $M^{(out)}(k)$ reduces to a new RH problem for RH problem \ref{RHP4} as the jump conditions ignore.

    \begin{RHP}\label{RHP6}
    		Find a matrix valued function $M^{(out)}(k)$ admits:
    		\begin{enumerate}[(i)]
    			\item Analyticity: ~$M^{(out)}(k)$ is  analytical in  $k\in\mathbb{C}\setminus(\mathbb{R}\cup\mathcal{Z}\cup\bar{\mathcal{Z}})$;

    			\item Asymptotic behaviors:$M^{(out)}(k)\sim I,\quad k\to\infty,$

                \item Residue conditions: $M^{(out)}(k)$ has simple poles at each point in $k\in\mathcal{Z}\cup\bar{\mathcal{Z}}$ satisfying:
                For $j\in\Delta_{k_0}^-$
                \begin{equation}
    \operatorname*{Res}_{k=k_j}M^{(out)}(k)=\lim_{k\to k_j}M^{(out)}(k)
    \begin{pmatrix}0&[({T_1}^{-1})'(k_j)]^{-1}f^{-1}(k_j)[({T_2}^{-1})'(k_j)]^{-1}e^{-2it\theta(k_j)}\\0&0\end{pmatrix};
                 \end{equation}
               \begin{equation}
    \operatorname*{Res}_{k{=}\bar{k}}M^{(out)}(k)=\lim_{k\to \bar{k_j}}M^{(out)}(k)
    \begin{pmatrix}0&0\\-[{T_2}'(\bar{k_j})]^{-1}(f^\dagger(k_j))^{-1}[{T_1}'(\bar{k_j})]^{-1}e^{2it\theta(\bar{k_j})}&0\end{pmatrix};
\end{equation}

                For $j\in\Delta_{k_0}^+$
                \begin{equation}
    \operatorname*{Res}_{k=k_j}M^{(out)}(k)=\lim_{k\to k_j}M^{(out)}(k)
    \begin{pmatrix}0&0\\{T_2}^{-1}(k_j)f(k_j){T_1}^{-1}(k_j)e^{2it\theta(k_j)}&0\end{pmatrix};
\end{equation}
                \begin{equation}
    \operatorname*{Res}_{k{=}\bar{k}}M^{(out)}(k)=\lim_{k\to \bar{k_j}}M^{(out)}(k)
    \begin{pmatrix}0&-T_1(\bar{k_j})f^\dagger(k_j)T_2(\bar{k_j})e^{-2it\theta(\bar{k_j})}\\0&0\end{pmatrix};
                 \end{equation}

    		\end{enumerate}
    	\end{RHP}

 \centerline{\begin{tikzpicture}[scale=0.7]
\path [fill=pink] (-5,4)--(-4,4) to (-4,4) -- (0,0);
\path [fill=pink] (-5,4)--(-4,4) to (-4,4) -- (-4,3);
\path [fill=pink] (-4,3)--(-5,3) to (-5,3) -- (-5,4);
\path [fill=pink] (0,0)--(-4,-4) to (-4,-4) -- (-5,-4);
\path [fill=pink] (-5,-4)--(-5,0) to (-5,0) -- (-5,-4);
\path [fill=pink] (-5,-3)--(-5,3) to (-5,3) -- (-4,3);
\path [fill=pink] (-4,3)--(-4,-3) to (-4,-3) -- (-5,-3);
\path [fill=pink] (-5,-4)--(-4,-4) to (-4,-4) -- (-4,-3);
\path [fill=pink] (-4,-3)--(-5,-3) to (-5,-3) -- (-5,-4);
\path [fill=pink] (-5,4)--(-4,4) to (-4,4) -- (0,0);
\path [fill=pink] (-4,-4)--(0,0) to (0,0) -- (-4,0);
\path [fill=pink] (-4,0)--(0,0) to (0,0) -- (-4,4);
\path [fill=pink] (5,4)--(4,4) to (4,4) -- (0,0);
\path [fill=pink] (0,0)--(4,-4) to (4,-4) -- (5,-4);
\path [fill=pink] (5,-4)--(5,0) to (5,0) -- (5,4);
\path [fill=pink] (5,4)--(4,4) to (4,4) -- (4,3);
\path [fill=pink] (4,3)--(5,3) to (5,3) -- (5,4);
\path [fill=pink] (4,3)--(5,3) to (5,3) -- (5,-3);
\path [fill=pink] (5,-3)--(4,-3) to (4,-3) -- (4,3);
\path [fill=pink] (4,-3)--(5,-3) to (5,-3) -- (5,-4);
\path [fill=pink] (5,-4)--(4,-4) to (4,-4) -- (4,-3);
\path [fill=pink] (0,0)--(4,4) to (4,4) -- (4,0);
\path [fill=pink] (4,0)--(4,-4) to (4,-4) -- (0,0);
\draw(4,2.5) [black, line width=0.5] circle(0.3);
\draw(4,-2.5) [black, line width=0.5] circle(0.3);
\draw(-2.5,2) [black, line width=0.5] circle(0.3);
\draw(-2.5,-2) [black, line width=0.5] circle(0.3);
\draw(-4,3) [black, line width=0.5] circle(0.3);
\draw(-4,3) [black, line width=0.5] circle(0.3);
\draw[fill][white] (4,2.5) circle [radius=0.3];
\draw[fill][white] (4,-2.5) circle [radius=0.3];
\draw[fill][white] (-2.5,2) circle [radius=0.3];
\draw[fill][white] (-2.5,-2) circle [radius=0.3];
\draw[fill][white] (-4,3) circle [radius=0.3];
\draw[fill][white] (-4,-3) circle [radius=0.3];
\draw [thick](-4,-4)--(4,4);
\draw [thick](-4,4)--(4,-4);
\draw[fill] (0,0)node[below]{$k_{0}$};
\draw[fill] (0,-1)node[below]{$\Omega_{5}$};
\draw[fill] (0,1.5)node[below]{$\Omega_{2}$};
\draw[fill] (1.5,-0.5)node[below]{$\Omega_{6}$};
\draw[fill] (1.5,1)node[below]{$\Omega_{1}$};
\draw[fill] (-1.5,1)node[below]{$\Omega_{3}$};
\draw[fill] (-1.5,-0.5)node[below]{$\Omega_{4}$};
\draw[fill] (-6,2)node[below]{$\mathcal {R}^{(2)}=U_{R}^{-1}$};
\draw[fill] (6,2)node[below]{$\mathcal {R}^{(2)}=W_{R}^{-1}$};
\draw[fill] (6,-2)node[below]{$\mathcal {R}^{(2)}=W_{L}$};
\draw[fill] (-6,-2)node[below]{$\mathcal {R}^{(2)}=U_{L}$};
\draw[fill] (4,2.5)node[below]{} circle [radius=0.05];
\draw[fill] (4,-2.5)node[below]{} circle [radius=0.05];
\draw[fill] (2,3)node[below]{} circle [radius=0.05];
\draw[fill] (2,-3)node[below]{} circle [radius=0.05];
\draw[fill] (0,2.5)node[below]{} circle [radius=0.05];
\draw[fill] (0,-2.5)node[below]{} circle [radius=0.05];
\draw[fill] (-2.5,2)node[below]{} circle [radius=0.05];
\draw[fill] (-2.5,-2)node[below]{} circle [radius=0.05];
\draw[fill] (-4,3)node[below]{} circle [radius=0.05];
\draw[fill] (-4,-3)node[below]{} circle [radius=0.05];
\draw[fill] (-2.5,2)node[below left]{${k}_{j}$};
\draw[fill] (-2.5,-2)node[below left]{${\bar{k}}_{j}$};
\draw[fill] (4.5,4)node[below]{$\Sigma_{1}$};
\draw[fill] (4.5,-4)node[below]{$\Sigma_{4}$};
\draw[fill] (-4.5,4)node[below]{$\Sigma_{2}$};
\draw[fill] (-4.5,-4)node[below]{$\Sigma_{3}$};
\draw[-][thick][dashed](-6,0)--(-5,0);
\draw[-][thick][dashed](-5,0)--(-4,0);
\draw[-][thick][dashed](-4,0)--(-3,0);
\draw[->][thick][dashed](-3,0)--(-2,0);
\draw[-][thick][dashed](-2,0)--(-1,0);
\draw[-][thick][dashed](-1,0)--(0,0);
\draw[-][thick][dashed](0,0)--(1,0);
\draw[-][thick][dashed](1,0)--(2,0);
\draw[->][thick][dashed](2,0)--(3,0);
\draw[-][thick][dashed](3,0)--(4,0);
\draw[-][thick][dashed](4,0)--(5,0);
\draw[->][thick][dashed](5,0)--(6,0)[thick]node[right]{$Rek$};
\end{tikzpicture}}\label{Figure 3.}
\noindent {\small \textbf{Figure 3.3}   Jump matrix $V^{(2)}$, $\bar{\partial}$ derivative of pink region: $\bar{\partial}R^{(2)}\neq0$.
$\bar{\partial}$-derivative of the white region: $\bar{\partial}R^{(2)}=0$.}

    We will establish the non reflective case of RH problem \ref{RHP3} as RH problem \ref{RHP6} to indicate that it approximates the finite sum of
    soliton solutions in this section. On the basis of the original RH problem \ref{RHP1}, the existence and uniqueness of the RH problem \ref{RHP6}
    solution have been proven in this section.

    Here, the main distribution to the RH problem $M^{(out)}(k)$ is from the soliton solutions corresponding to the scattering data
    \begin{equation}\nonumber
    \sigma_d^{(out)}=\left\{(k_j,\tilde{c}_j),\quad k_j\in\mathcal{Z}\right\}_{k=1}^{2N},\quad\widetilde{c}_j(k_j)={T_2}^{-1}(k_j)f(k_j){T_1}^{-1}(k_j).
    \end{equation}
    Next we build a outer model RH problem and show that its solution can be approximated with a finite sum of solitons. We first recall the RH problem
    corresponding to the matrix function $M^{(out)}(k;\sigma_d^{out}){:}$
    \begin{RHP}\label{RHP7}
    		Find a matrix valued function $M^{(out)}(k;\sigma_d^{out})$ admits:
    		\begin{enumerate}[(i)]
    			\item Analyticity: ~$M^{(out)}(k;\sigma_d^{out})$ is analytical in $k\in\mathbb{C}\setminus(\mathbb{R}\cup\mathcal{Z}\cup\bar{\mathcal{Z}})$;

    			\item Asymptotic behaviors:$M^{(out)}(k;\sigma_d^{out})\sim I,\quad k\to\infty$;

                \item Residue conditions: $M^{(out)}(k;\sigma_d^{out})$ has simple poles at each point in $k\in\mathcal{Z}\cup\bar{\mathcal{Z}}$ satisfying:
                For $j\in\Delta_{k_0}^-$
                \begin{equation}
    \operatorname*{Res}_{k=k_j}M^{(out)}(k;\sigma_d^{out})=\lim_{k\to k_j}M^{(out)}(k;\sigma_d^{out})
    \begin{pmatrix}0&[({T_1}^{-1})'(k_j)]^{-1}f^{-1}(k_j)[({T_2}^{-1})'(k_j)]^{-1}e^{-2it\theta(k_j)}\\0&0\end{pmatrix};
                 \end{equation}
               \begin{equation}
    \operatorname*{Res}_{k{=}\bar{k}}M^{(out)}(k;\sigma_d^{out})=\lim_{k\to \bar{k_j}}M^{(out)}(k;\sigma_d^{out})
    \begin{pmatrix}0&0\\-[{T_2}'(\bar{k_j})]^{-1}(f^\dagger(k_j))^{-1}[{T_1}'(\bar{k_j})]^{-1}e^{2it\theta(\bar{k_j})}&0\end{pmatrix};
\end{equation}

                For $j\in\Delta_{k_0}^+$
                \begin{equation}
    \operatorname*{Res}_{k=k_j}M^{(out)}(k;\sigma_d^{out})=\lim_{k\to k_j}M^{(out)}(k;\sigma_d^{out})
    \begin{pmatrix}0&0\\{T_2}^{-1}(k_j)f(k_j){T_1}^{-1}(k_j)e^{2it\theta(k_j)}&0\end{pmatrix};
\end{equation}
                \begin{equation}
    \operatorname*{Res}_{k{=}\bar{k}}M^{(out)}(k;\sigma_d^{out})=\lim_{k\to \bar{k_j}}M^{(out)}(k;\sigma_d^{out})
    \begin{pmatrix}0&-T_1(\bar{k_j})f^\dagger(k_j)T_2(\bar{k_j})e^{-2it\theta(\bar{k_j})}\\0&0\end{pmatrix};
                 \end{equation}

    		\end{enumerate}
    	\end{RHP}
    In order to show the existence and uniqueness of solution corresponding to the above RH problem, we need to study the existence and uniqueness of
    RH problem \ref{RHP1} in the reflectionless case. In this special case, $M(k; x,t)$ has no contour, the RH problem \ref{RHP1} reduces to the following RH problem.
    \begin{RHP}\label{RHP8}
    		Given scattering data $\sigma_d=\{(k_j,f(k_j)\}_{j=1}^N$ and $\mathcal{Z}=\left\{k_j\right\}_{j=1}^{N}$. Find a matrix-valued function
    $M(k;x,t|\sigma_d)$ with following condition:
    		\begin{enumerate}[(i)]
    			\item Analyticity: ~$M(k;x,t|\sigma_d)$ is analytical in $k\in\mathbb{C}\setminus(\mathbb{R}\cup\mathcal{Z}\cup\bar{\mathcal{Z}})$;

    			\item Asymptotic behaviors:$M(k;x,t|\sigma_d)\sim I,\quad k\to\infty$;

                \item Residue conditions: $M(k;x,t|\sigma_d)$ has simple poles at each point in $k\in\mathcal{Z}\cup\bar{\mathcal{Z}}$ satisfying:

                \begin{equation}
    \operatorname{Res}\limits_{k=k_j}M(k;x,t|\sigma_d)=\lim\limits_{k\to k_j}M(k;x,t|\sigma_d)N_j, N_j=
    \begin{pmatrix}0&0\\f(k_j)e^{2it\theta(k_{j})}&0\end{pmatrix};
\end{equation}
                  \begin{equation}
    \operatorname{Res}_{k=k_j^*}M(k;x,t|\sigma_d)=\lim_{k\to k_j^*}M(k;x,t|\sigma_d)\widetilde{N}_j,\widetilde{N}_j=
    \begin{pmatrix}0&[f(k_j)e^{2it\theta(k_{j})}]^\dagger\\0&0\end{pmatrix}.
\end{equation}

    		\end{enumerate}
    	\end{RHP}
\begin{proposition} Given scattering data $\sigma_d=\{(k_j,f(k_j)\}_{j=1}^N$ and $\mathcal{Z}=\left\{k_j\right\}_{j=1}^{N}$he RH problem has unique solutions.
    \begin{equation}\nonumber
    q_{sol}(x,t;\sigma_d)=2i\lim_{k\to\infty}(kM(k;x,t|\sigma_d)_{UR}.
    \end{equation}
    \end{proposition}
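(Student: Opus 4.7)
Since $M(k;x,t|\sigma_d)$ has no jump contour and must be analytic on $\C\setminus(\mathcal{Z}\cup\bar{\mathcal{Z}})$ with simple poles at the prescribed points and $M\to I_4$ as $k\to\infty$, it is a rational function of $k$. My plan proceeds in three stages: (a) reduce to a finite linear algebraic system via an explicit partial-fraction ansatz; (b) prove unique solvability via the symmetry $M^\dagger(\bar k)=M^{-1}(k)$ combined with a Vanishing-Lemma argument; (c) read off $q_{sol}$ from the $k\to\infty$ expansion.

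For stage (a), I would posit
\begin{equation}\nonumber
M(k;x,t|\sigma_d)=I_4+\sum_{j=1}^N\frac{P_j(x,t)}{k-k_j}+\sum_{j=1}^N\frac{Q_j(x,t)}{k-\bar k_j}.
\end{equation}
Because $N_j$ has only its lower-left $2\times 2$ block nonzero, the residue condition at $k_j$ forces $P_j=(M_R(k_j)f(k_j)e^{2it\theta(k_j)},\,0_{4\times 2})$, i.e.\ $P_j$ lives in its first two columns. Symmetrically, $\widetilde N_j$ forces $Q_j=(0_{4\times 2},\,M_L(\bar k_j)[f(k_j)e^{2it\theta(k_j)}]^\dagger)$. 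Splitting $M=(M_L,M_R)$ and evaluating the ansatz at each node yields a closed $4N\times 4N$ Cauchy-type linear system for the unknown blocks $\{M_R(k_j),M_L(\bar k_j)\}_{j=1}^N$ with coefficients built from $(k_\ell-\bar k_j)^{-1}$ and $f(k_j)$.

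For stage (b), suppose $M^{(1)},M^{(2)}$ are two solutions. The nilpotency $N_j^2=\widetilde N_j^2=0$ implies that $M^{(1)}(M^{(2)})^{-1}$ has only removable singularities at $\mathcal{Z}\cup\bar{\mathcal{Z}}$; together with $M^{(1)}(M^{(2)})^{-1}\to I_4$ at infinity, Liouville's theorem gives $M^{(1)}\equiv M^{(2)}$. Equivalently, the linear system of stage (a) is injective, hence bijective by finite dimensionality. To legitimise $(M^{(2)})^{-1}$, I would use the symmetry $M^\dagger(\bar k)=M^{-1}(k)$ to verify $\det M\equiv 1$, the cancellation of apparent poles of $\det M$ again coming from the nilpotent residue structure. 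I expect the main obstacle here to be the reflectionless Vanishing-Lemma step in the $4\times 4$ block setting: one must confirm, using Assumption \ref{assumption25}, that the Hermitian Gram matrix governing the linear system is positive definite --- more delicate than in the scalar focusing-NLS case because of the $2\times 2$ block form of $f(k_j)$ and the pairing $f(k_j)\leftrightarrow f(k_j)^\dagger$.

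Finally, for stage (c), expanding the ansatz at infinity gives $M=I_4+k^{-1}M_1+O(k^{-2})$ with $M_1=-\sum_j P_j-\sum_j Q_j$. Since the right block of every $P_j$ vanishes, only the $Q_j$ terms contribute to the upper-right $2\times 2$ block of $M_1$, yielding the expected $N$-soliton expression. The reconstruction formula $q_{sol}(x,t;\sigma_d)=2i(M_1)_{UR}$ then follows exactly as in the derivation of \eqref{q1}, completing the proof.
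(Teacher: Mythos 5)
Your proposal is correct and follows essentially the same route as the paper: a partial-fraction (rational) ansatz reducing the reflectionless RH problem to a finite Cauchy-type linear system, Liouville's theorem for uniqueness, and unique solvability of that system via positive definiteness of a Hermitian Gram matrix (the paper's Lemma \ref{lemma3.5}, which shows $\det(I_{2N}+\mathbf{A}\bar{\mathbf{A}})>1$ using the inner products $\langle h_j,h_l\rangle$ of the exponentials $h_j(y)=e^{-i\bar k_j y}$), followed by reading off $q_{sol}$ from the $1/k$ coefficient. The step you flag as the main obstacle is exactly the one the paper isolates into that lemma, so no gap remains.
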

   \begin{proof}  The uniqueness of the solution can be guaranteed by \text{Liouville's theorem}. As for the RH problem
    \begin{equation}\nonumber
    M_+(k;x,t|\sigma_d)=M_-(k;x,t|\sigma_d)V(k),
    \end{equation}
    which can be regularized by subtracting any pole contributions and the leading order asymptotics at infinity:
    \begin{equation}\nonumber
    \begin{aligned}\mathcal{M}(x,t;k)=M(k;x,t|\sigma_d)-I_{4}-\sum_{j=1}^{N}\left(\frac{\mathrm{Res~}_{k=k_j}M(k;x,t|\sigma_d)}{k-k_{j}}
    +\frac{\mathrm{Res~}_{k=k_j^*}M(k;x,t|\sigma_d)}{k-k_j^*}\right).\end{aligned}
\end{equation}
    Consequently, the piecewise holomorphic function $\mathcal{M}(x,t;k)$ satisfies
    \begin{equation}\nonumber
    \begin{aligned}&\mathcal{M}_{+}(x,t;k)-\mathcal{M}_{-}(x,t;k)=M_{-}(k;x,t|\sigma_d)(V(k)-I_{4}),&&k\in\mathbb{R},\\
    &\mathcal{M}(x,t;k)\to0,&&k\to\infty.\end{aligned}
\end{equation}

    Using Sokhotski-Plemelj formula, we have
    \begin{equation}\nonumber
    \mathcal{M}(x,t;k)=\frac1{2\pi i}\int_\mathbb{R}\frac{M(\zeta;x,t|\sigma_d)(V(\zeta)-I_4)}{\zeta-k}\mathrm{d}\zeta.
\end{equation}
    Moreover when $V=I_4$ we can solve the RH problem \ref{RHP8} using the system of algebraic-integral equations
    \begin{equation}\nonumber
    M_{UL}(k|\sigma_d)=I_{2}+\sum_{j=1}^{N}\frac{e^{2i\theta(k_{j})}M_{UR}(k_{j})f(k_{j})}{k-k_{j}},
\end{equation}
    \begin{equation}\nonumber
    M_{UR}(k|\sigma_d)=-\sum_{j=1}^{N}\frac{e^{-2i\theta(\bar{k_{j}})}M_{UL}(\bar{k_{j}}|\sigma_d)f^\dagger(k_{j})}{k-\bar{k_{j}}}.
\end{equation}
    Thus
    \begin{equation}\nonumber
    q_{sol}(x,t;\sigma_d)=2i\lim_{k\to\infty}(kM(k;x,t|\sigma_d)_{UR}=-2i\sum_{j=1}^{N}{[e^{{-2i\theta(\bar{k_{j}})}}M_{UL}(\bar{k_{j}})f^{{\dagger}}(k_j)]}.
    \end{equation}
    Let
    \begin{equation}\nonumber
    \mathbf{h}(k)=e^{2i\theta(k)}f(k),\quad\mathbf{F}(k)=-2i\mathbf{M}_{\mathrm{UL}}(k)\mathbf{h}^\dagger(\bar{k}),
\end{equation}
    \begin{equation}\nonumber
    \mathbf{G}(k)=\mathbf{F}(k)+2\mathbf{ih}^\dagger(\bar{k})+
    \sum_{j=1}^{N}\sum_{l=1}^{N}(\frac{F(\bar{k_{l}})\mathbf{h}(k_{j})\mathbf{h}^{\dagger}(\bar{k})}{(k-k_{j})(k_{j}-k_{j})}).
\end{equation}
    In the reflectionless case, the solution to the spin-1 GP equation (1.1) can be expressed as follows:
    \begin{equation}\label{1000}
    q=\sum_{j=1}^N{\mathbf{F}(x,t;\bar{k}_j)},
    \end{equation}
    where $\mathbf{F}(x,t;\bar{k}_j)$ is the solution to the following algebraic system
    \begin{equation}\label{ds}
    \begin{cases}
    \mathbf{G}(x,t;\bar{k}_1)&=0,\\\mathbf{G}(x,t;\bar{k}_2)&=0,\\&\vdots\\\mathbf{G}(x,t;\bar{k}_N)&=0.\end{cases}
\end{equation}
   We verify the existence and uniqueness of the solution to the algebraic system \eqref{ds}. The proof can be shown in a similar way as the reference \cite{boo-27}.
    They have proof that the determinant of the matrix $a(k)$ has N zeros, denoted by $k_1,\ldots,k_N$ , which are all in the upper half of the complex plane
    $\mathbb{C}^{+}$ and are not on the real axis. The zeros have multiplicities $m_1+1,\ldots,m_N+1$, respectively.
   \begin{equation}\nonumber
    \begin{aligned}
\mathbf{G}^\mathrm{T}(x,t;k)=& \mathbf{F}^\mathrm{T}(x,t;k)+2i\mathbf{h}^\dagger(x,t;\bar{k}) \\
&\begin{aligned}&+\sum_{k=1}^{N}\sum_{l=1}^{N}\left.
\left(\frac{\mathbf{h}^{\dagger}(x,t;\bar{k})\mathbf{h}(x,t;k_j)\mathbf{F}^{\mathrm{T}}(x,t;\bar{k_{l}})}{(k-k_j)(k_j-\bar{k_{j}})}\right).\right.\end{aligned}
\end{aligned}
\end{equation}
    Define $2N\times 2N$ matrix $\mathbf{X}$, let
    \begin{equation}\nonumber
    \mathbf{X}=\begin{pmatrix}\mathbf{X}_{11}&\mathbf{X}_{12}&\cdots\mathbf{X}_{1N}\\\mathbf{X}_{21}&\mathbf{X}_{22}&\cdots\mathbf{X}_{2N}\\
    \vdots\\\mathbf{X}_{N1}&\mathbf{X}_{N2}&\cdots\mathbf{X}_{NN}\end{pmatrix},\quad\mathbf{X}_{jk}=\begin{pmatrix}\mathbf{X}_{jk}^{11}&\mathbf{X}_{jk}^{12}\\
    \mathbf{X}_{jk}^{21}&\mathbf{X}_{jk}^{22}\end{pmatrix},
\end{equation}
    where $\mathbf{X}_{jk}$ is a $2\times2$ matrix for , we call $\mathbf{X}_{jk}^{rs}$ the $(r_j,s_k)\text{-entry}$ of $\mathbf{X}$.
    \begin{equation}\nonumber
    \mathbf{H}=\mathbf{H}(x,t)=-2i\left(\mathbf{h}(k_1),\mathbf{h}(k_1),\cdots\mathbf{h}(k_N)\right)^\dagger.
\end{equation}
    Define $2N\times 2N$ matrix-valued functions of $\left(x,t\right)$:
    \begin{equation}\nonumber
    \mathbf{A}=\mathbf{A}(x,t),
    \end{equation}
    where
    \begin{equation}\nonumber
    \mathbf{A}_{jk}=-i\frac{\mathbf{h}^{\dagger}(k_j)}{\bar{k_j}-k_k},
\end{equation}
    So \eqref{1000} can become
    \begin{equation}\nonumber
    q(x,t)=\alpha\left(I_{2N}+\mathbf{A}(x,t)\mathbf{\bar{A}}(x,t)\right)^{-1}\mathbf{H}(x,t),
\end{equation}
    \begin{equation}\nonumber
    \alpha=(I_2,I_2\cdots I_2)_{2\times2N}.
\end{equation}
    According to Lemma \ref{lemma3.5}, the algebraic system
    \begin{equation}\label{ds2}
    \begin{pmatrix}I_{2N}+\mathbf{A}(x,t)\mathbf{\bar{B}}(x,t)\end{pmatrix}\tilde{\mathbf{F}}(x,t)=\mathbf{H}(x,t),
\end{equation}
    with the unknown column vector
    \begin{equation}\nonumber
    \tilde{\mathbf{F}}=\tilde{\mathbf{F}}(x,t)=\left(\mathbf{F}(\bar{k}_1),\ldots,\mathbf{F}(\bar{k}_N)\right)^\mathrm{T},
\end{equation}
    has a unique solution, we can now prove the existence and uniqueness of the RH problem.
    \end{proof}

    \begin{lemma}\label{lemma3.5}
     The solution to the algebraic system \eqref{ds2} exists uniquely.
    \end{lemma}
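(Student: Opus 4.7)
The plan is to reduce the statement to the finite-dimensional claim that $I_{2N}+\mathbf{A}\bar{\mathbf{A}}$ is invertible, and then prove invertibility by a vanishing-lemma / energy argument that exploits the Cauchy-kernel form $\mathbf{A}_{jk}=-i\mathbf{h}^{\dagger}(k_j)/(\bar{k}_j-k_k)$ together with $\mathrm{Im}\,k_j>0$. Since \eqref{ds2} is a square linear system over $\mathbb{C}^{2N}$, existence and uniqueness of $\tilde{\mathbf{F}}$ are simultaneously equivalent to triviality of $\ker(I_{2N}+\mathbf{A}\bar{\mathbf{A}})$, so the whole lemma reduces to that injectivity statement.

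To show the kernel is trivial, I would let $\mathbf{v}\in\mathbb{C}^{2N}$ satisfy $(I_{2N}+\mathbf{A}\bar{\mathbf{A}})\mathbf{v}=0$ and set $\mathbf{w}=\bar{\mathbf{A}}\mathbf{v}$, so that $\mathbf{v}=-\mathbf{A}\mathbf{w}$; taking complex conjugates yields the companion relation $(I_{2N}+\bar{\mathbf{A}}\mathbf{A})\bar{\mathbf{v}}=0$. The idea is to combine these two identities and form the manifestly nonnegative quantity $\|\mathbf{v}\|^{2}+\|\mathbf{w}\|^{2}$, expand it using $\mathbf{v}=-\mathbf{A}\mathbf{w}$ and $\mathbf{w}=\bar{\mathbf{A}}\mathbf{v}$, and then collapse the double sum over the discrete spectrum by the Cauchy partial-fraction identity
\begin{equation}
\frac{1}{(\bar{k}_j-k_k)(k_k-\bar{k}_\ell)}=\frac{1}{\bar{k}_j-\bar{k}_\ell}\left(\frac{1}{\bar{k}_j-k_k}+\frac{1}{k_k-\bar{k}_\ell}\right).
\end{equation}
After this rearrangement the cross-products reassemble into a Gram-type expression, and $\mathrm{Im}\,k_j>0$ converts everything into (the squared norm of) a Cauchy transform of $\mathbf{w}$ evaluated at $\{\bar{k}_\ell\}$, producing an identity of the shape $\|\mathbf{v}\|^{2}+\|\mathbf{w}\|^{2}+(\text{nonnegative})=0$, which forces $\mathbf{v}=\mathbf{w}=0$.

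The main obstacle is the $2\times 2$ block structure carried by the matrix prefactors $\mathbf{h}^{\dagger}(k_j)$: the quadratic form is not scalar, so the partial-fraction manipulation has to be executed blockwise, and one needs the symmetry $f(k)^{T}=f(k)$ (and hence $\mathbf{h}(k)^{T}=\mathbf{h}(k)$), which is available from Proposition \ref{prop2.3}, so that the cross-terms pair off correctly and the final expression is genuinely a sum of squares. An equivalent, perhaps cleaner, route is to bypass the direct block computation by invoking the correspondence between \eqref{ds2} and the homogeneous RH problem associated with RH Problem \ref{RHP8} (which was already represented through Sokhotski--Plemelj above) and applying a Zhou-type vanishing lemma; this is in essence the strategy of \cite{boo-27}. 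Once invertibility of $I_{2N}+\mathbf{A}\bar{\mathbf{A}}$ is secured, $\tilde{\mathbf{F}}$ is uniquely determined by \eqref{ds2}, and the representation \eqref{1000} then yields the unique reflectionless $N$-soliton configuration $q_{\mathrm{sol}}(x,t;\sigma_d)$, completing the lemma.
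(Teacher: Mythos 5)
Your reduction of the lemma to the invertibility of $I_{2N}+\mathbf{A}\bar{\mathbf{A}}$ is exactly the paper's starting point, and you have correctly located where $\operatorname{Im}k_j>0$ must enter. But the decisive step --- the identity of the shape $\|\mathbf{v}\|^{2}+\|\mathbf{w}\|^{2}+(\text{nonnegative})=0$ --- is asserted rather than derived, and the route you propose does not close as written. Expanding directly with $\mathbf{v}=-\mathbf{A}\mathbf{w}$ and $\mathbf{w}=\bar{\mathbf{A}}\mathbf{v}$ gives
\begin{equation}
\|\mathbf{v}\|^{2}+\|\mathbf{w}\|^{2}=\mathbf{w}^{\dagger}\bigl(\bar{\mathbf{A}}-\mathbf{A}^{\dagger}\bigr)\mathbf{v},
\qquad\text{equivalently}\qquad
\|\mathbf{w}\|^{2}=-\mathbf{w}^{\dagger}\bar{\mathbf{A}}\mathbf{A}\,\mathbf{w},
\end{equation}
and $\bar{\mathbf{A}}\mathbf{A}$ is neither Hermitian nor of controlled sign, because $\mathbf{A}$ itself is not symmetric: the denominators $\bar{k}_j-k_k$ are not symmetric in $(j,k)$ and the block prefactor $\mathbf{h}^{\dagger}(k_j)$ depends only on the row index, so the symmetry $f^{T}=f$ does not by itself make the cross-terms pair off. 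Moreover your partial-fraction identity degenerates on the diagonal $j=\ell$ (the prefactor $1/(\bar{k}_j-\bar{k}_\ell)$ blows up while the bracket vanishes), which is precisely where the ``squares'' must come from. So the nonnegativity you need is not produced by these manipulations alone.

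The paper supplies the missing ingredient by a factorization rather than an energy identity: it writes $\mathbf{A}=\hat{\mathbf{A}}\tilde{\mathbf{A}}$ with $\hat{\mathbf{A}}$ block-diagonal (blocks $\mathbf{h}^{\dagger}(k_j)$, symmetric since $f^{T}=f$, whence $\bar{\hat{\mathbf{A}}}=\hat{\mathbf{A}}^{\dagger}$) and $\tilde{\mathbf{A}}_{jl}=-i(\bar{k}_j-k_l)^{-1}I_2=\langle e^{-i\bar{k}_jy},e^{-i\bar{k}_ly}\rangle_{L^{2}(0,+\infty)}I_2$, so that $\tilde{\mathbf{A}}$ is the Gram matrix of linearly independent exponentials; this is exactly where $\operatorname{Im}k_j>0$ enters, and it replaces your partial fractions while handling the diagonal automatically. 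Then, with $\mathbf{C}=\bar{\tilde{\mathbf{A}}}^{1/2}>0$,
\begin{equation}
\det\bigl(I_{2N}+\mathbf{A}\bar{\mathbf{A}}\bigr)=\det\bigl(I_{2N}+\mathbf{C}\hat{\mathbf{A}}\tilde{\mathbf{A}}\hat{\mathbf{A}}^{\dagger}\mathbf{C}\bigr)\geq 1,
\end{equation}
because $(\mathbf{C}\hat{\mathbf{A}})\tilde{\mathbf{A}}(\mathbf{C}\hat{\mathbf{A}})^{\dagger}$ is positive semidefinite; Cramer's rule then gives existence and uniqueness. If you want to keep your vanishing-lemma framing, you would still need this Gram (equivalently, half-line integral) representation of $(\bar{k}_j-k_l)^{-1}$ to exhibit the quadratic form as a sum of squares; the partial-fraction identity alone is not enough.
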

   \begin{proof} Define a $2N\times2N$ matrix-valued function $\mathbf{\hat{A}}(x,t)$ with $(j,l)\text{-entry}$,
   \begin{equation}\nonumber
    \hat{\mathbf{A}}_{jl}=\begin{cases}{[\mathbf{h}(x,t;k_j)]^\dagger},&j=l,\\\mathbf{0},&\text{otherwise,}\end{cases}
\end{equation}
    and a $2N\times2N$ matrix ${\tilde{\mathbf{A}}}$ with $(j,l)\text{-entry}$,
    \begin{equation}\nonumber
    \mathbf{A}_{jl}=-i\frac{1}{\bar{k_{j}}-k_{j}}I_{2}.
\end{equation}
    Indeed, upon performing direct calculations, we have discovered
    \begin{equation}\nonumber
    \hat{\mathbf{A}}(x,t)\tilde{\mathbf{A}}=\mathbf{A}(x,t).
\end{equation}
    Define
    \begin{equation}\nonumber
    h_{j}(y)=e^{-i\overline{k}_{j}y},
\end{equation}
    and an inner product $\langle\cdot,\cdot\rangle{:}$
    \begin{equation}\nonumber
    \langle h_{j}(y),h_{l}(y)\rangle=\int_0^{+\infty}h_{j}(y)\bar{h_{l}(y)}\mathrm{d}y.
\end{equation}
    So
    \begin{equation}\nonumber
    \tilde{\mathbf{A}}_{jl}=\langle h_{j},h_{l}\rangle I_{2},
\end{equation}
    which means $\tilde{\mathbf{A}}$ is a positive definite Hermitian matrix, we can get
    \begin{equation}\nonumber
    \begin{aligned}\mathbf{A}\bar{\mathbf{A}}&=(\hat{\mathbf{A}}\tilde{\mathbf{A}})\overline{(\hat{\mathbf{A}}(x,t)\tilde{\mathbf{A}})}\\
    &=\hat{\mathbf{A}}(x,t)\tilde{\mathbf{A}}\hat{\mathbf{A}}^\dagger(x,t),\end{aligned}
\end{equation}
    we can conclude that $\hat{\mathbf{A}}(x,t)\tilde{\mathbf{A}}\hat{\mathbf{A}}^\dagger(x,t)$ is Hermitian positive definite matrices. Therefore
    \begin{equation}\nonumber
    \begin{aligned}
\det\left(I_{2N}+\mathbf{A}(x,t)\mathbf{\bar{A}}(x,t)\right)& =\det\left(I_{2N}+
\hat{\mathbf{A}}(x,t)\tilde{\mathbf{A}}\hat{\mathbf{A}}^{\dagger}(x,t)\bar{\tilde{\mathbf{A}}}\right) \\
&=\det\left(I_{2N}+\hat{\mathbf{A}}(x,t)\tilde{\mathbf{A}}\hat{\mathbf{A}}^{\dagger}(x,t)\mathbf{C}^{2}\right) \\
&=\det\left(I_{2N}+\mathbf{C}\hat{\mathbf{A}}(x,t)\tilde{\mathbf{A}}\hat{\mathbf{A}}^{\dagger}(x,t)\mathbf{C}\right)>1.
\end{aligned}
\end{equation}
    Using $\text{Cramer's Rule,}$, we can now prove the existence and uniqueness of the solution to the algebraic system \eqref{ds2}.
    \end{proof}

    Introducing symbols $\begin{aligned}\Delta\subseteq\{1,2,\cdots,N\},&&\bigtriangledown=\Delta^c=\{1,2,\cdots,N\}\setminus\Delta,\end{aligned}$ define
    \begin{equation}\label{ak}
    a_\Delta=\prod_{j\in\Delta}\frac{k-k_j}{k-\bar{k}_j},\quad a_\bigtriangledown=\prod_{j\in\bigtriangledown}\frac{k-k_j}{k-\bar{k}_j},
\end{equation}
    and make another transformation
    \begin{equation}\nonumber
    M^\Delta(k|\sigma_d^\Delta)=M(k|\sigma_d)a_\Delta(k)^{\sigma_4},
\end{equation}
    then $M^\Delta(k|\sigma_d^\Delta)$ satisfies the following non-reflective RH problem.
\begin{RHP}\label{RHP9}
    		Find a matrix valued function $M^\Delta(k|\sigma_d^\Delta)$ admits:
    		\begin{enumerate}[(i)]
    			\item Analyticity: ~$M^\Delta(k|\sigma_d^\Delta)$ is analytical in $k\in\mathbb{C}\setminus(\mathbb{R}\cup\mathcal{Z}\cup\bar{\mathcal{Z}})$;

    			\item Asymptotic behaviors:$M^\Delta(k|\sigma_d^\Delta)\sim I,\quad k\to\infty$;

                \item Residue conditions: $M^\Delta(k|\sigma_d^\Delta)$ has simple poles at each point in $k\in\mathcal{Z}\cup\bar{\mathcal{Z}}$ satisfying:
                For $j\in\Delta_{k_0}^-$
                \begin{equation}
    \operatorname*{Res}_{k=k_j}M^{(out)}(k;\sigma_d^{out})=\lim_{k\to k_j}M^{(out)}(k;\sigma_d^{out})\begin{pmatrix}0&[(a_\Delta)'(k_j)]^{-2}f^{-1}(k_j)e^{-2it\theta(k_j)}
    \\0&0\end{pmatrix};
                 \end{equation}
               \begin{equation}
    \operatorname*{Res}_{k{=}\bar{k}}M^{(out)}(k;\sigma_d^{out})=\lim_{k\to \bar{k_j}}M^{(out)}(k;\sigma_d^{out})\begin{pmatrix}0&0\\
    -[(a_\Delta)'(\bar{k_j})]^{-2}(f^\dagger(k_j))^{-1}e^{2it\theta(\bar{k_j})}&0\end{pmatrix};
\end{equation}

                For $j\in\Delta_{k_0}^+$
                \begin{equation}
    \operatorname*{Res}_{k=k_j}M^{(out)}(k;\sigma_d^{out})=\lim_{k\to k_j}M^{(out)}(k;\sigma_d^{out})\begin{pmatrix}0&0\\
    {a_\Delta}^{2}(k_j)f(k_j)e^{2it\theta(k_j)}&0\end{pmatrix};
\end{equation}
                \begin{equation}
    \operatorname*{Res}_{k{=}\bar{k}}M^{(out)}(k;\sigma_d^{out})=\lim_{k\to \bar{k_j}}M^{(out)}(k;\sigma_d^{out})
    \begin{pmatrix}0&-[(a_\Delta)'(\bar{k_j})]^{-2}f^\dagger(k_j)e^{-2it\theta(\bar{k_j})}\\0&0\end{pmatrix}.
                 \end{equation}

    		\end{enumerate}
    	\end{RHP}

     \begin{proposition} The RH problem $\ref{RHP9}$ has a unique solution for the non-reflective scattering data
     $\sigma_d^\Delta=\{k_j,{a_\Delta}^{2}(k_j)f(k_j)\}_{j=1}^N$. and
     \begin{equation}\label{qsol}
    q_{sol}(x,t;\sigma_d^\Delta)=2i\lim_{k\to\infty}[kM^\Delta(k|\sigma_d^\Delta)]_{UR}=2i\lim_{k\to\infty}[kM(k|\sigma_d)]_{UR}=q_{sol}(x,t;\sigma_d).
\end{equation}
\end{proposition}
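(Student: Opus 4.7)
The plan is to establish both claims by pushing the existence/uniqueness and reconstruction formula for $M(k|\sigma_d)$ (already shown in the previous proposition via Lemma \ref{lemma3.5}) through the Blaschke-type transformation $M^\Delta(k|\sigma_d^\Delta) = M(k|\sigma_d)\, a_\Delta(k)^{\sigma_4}$ defined just above the statement, where $a_\Delta(k)^{\sigma_4} = \mathrm{diag}(a_\Delta,a_\Delta,a_\Delta^{-1},a_\Delta^{-1})$. Since $a_\Delta$ is a finite product of M\"obius factors with simple zeros exactly at $\{k_j\}_{j\in\Delta}$ and simple poles exactly at $\{\bar k_j\}_{j\in\Delta}$, the transformation is explicit and invertible on $\mathbb{C}\setminus(\mathcal{Z}\cup\bar{\mathcal{Z}})$.

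First I would verify the three easy items of RH Problem \ref{RHP9}: analyticity follows immediately since both factors are analytic on $\mathbb{C}\setminus(\mathcal{Z}\cup\bar{\mathcal{Z}})$; the normalization $M^\Delta\to I_4$ as $k\to\infty$ follows from $a_\Delta(k)=1+O(k^{-1})$ by \eqref{ak}; and no jump contour is introduced since $a_\Delta$ is meromorphic. The technical step is the residue conditions, which I would verify case by case. For $j\in\Delta$, the left block $M_L\cdot a_\Delta$ becomes regular at $k_j$ (the simple zero of $a_\Delta$ kills the pole of $M_L$), while the right block $M_R\cdot a_\Delta^{-1}$ acquires a new simple pole at $k_j$; a local Taylor expansion $a_\Delta(k)=a_\Delta'(k_j)(k-k_j)+O((k-k_j)^2)$, combined with the original residue condition of RH Problem \ref{RHP8}, produces exactly the norming constant $[a_\Delta'(k_j)]^{-2}f^{-1}(k_j)e^{-2it\theta(k_j)}$ in the upper-right entry, matching the form listed for $j\in\Delta_{k_0}^-$. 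For $j\in\nabla$, $a_\Delta$ is analytic and nonvanishing at $k_j$, so the pole location (left block) is preserved and the residue is simply multiplied by $a_\Delta(k_j)^2$, yielding the norming constant $a_\Delta^2(k_j)f(k_j)e^{2it\theta(k_j)}$ stated for $j\in\Delta_{k_0}^+$. The conjugate residues at $\bar k_j$ follow by the Schwarz symmetry $a_\Delta(\bar k)=\overline{a_\Delta(k)^{-1}}$.

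Existence and uniqueness then follow immediately: by the preceding proposition $M(k|\sigma_d)$ exists and is unique, and the map $M\mapsto M\cdot a_\Delta^{\sigma_4}$ is a bijection on meromorphic functions with the prescribed singular behavior, so $M^\Delta$ inherits existence and uniqueness (alternatively one could reprove uniqueness by a direct Liouville argument exactly as in the proof of the previous proposition). Finally, for the reconstruction identity \eqref{qsol}, the large-$k$ expansion $a_\Delta(k)^{-1}=1+c_\Delta k^{-1}+O(k^{-2})$ gives
\begin{equation}\nonumber
kM^\Delta_{UR}(k)=kM_{UR}(k)\cdot a_\Delta(k)^{-1}=kM_{UR}(k)+O(k^{-1}),
\end{equation}
so $2i\lim_{k\to\infty}kM^\Delta_{UR}=2i\lim_{k\to\infty}kM_{UR}=q_{sol}(x,t;\sigma_d)$, which is exactly the chain of equalities in \eqref{qsol}.

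The main obstacle I expect is the bookkeeping in the residue transformation for $j\in\Delta$: one must correctly extract the coefficient $[a_\Delta'(k_j)]^{-2}$ (note the square, which arises because $a_\Delta$ multiplies the left block while $a_\Delta^{-1}$ multiplies the right block, so the two factors combine) and keep the exponential and $f$-factors consistent with the symmetric structure $f^T=f$. Once this local expansion is done carefully at one representative pole $k_j$ and its conjugate $\bar k_j$, the remaining poles follow by the same computation, and all other steps are essentially formal.
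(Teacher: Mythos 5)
The paper states this proposition without any proof, so there is nothing to compare against line by line; the intended argument is clearly the one implicit in the definition $M^\Delta(k|\sigma_d^\Delta)=M(k|\sigma_d)\,a_\Delta(k)^{\sigma_4}$ given just before RH Problem \ref{RHP9}, and that is exactly what you carry out. Your verification is correct: the local expansion at $k_j$ for $j\in\Delta$ (zero of $a_\Delta$ cancelling the pole of the left block, new pole of the right block from $a_\Delta^{-1}$, combining to give the factor $[a_\Delta'(k_j)]^{-2}$ and the inverted norming constant), the untouched pole structure with the extra factor $a_\Delta^2(k_j)$ for $j\in\nabla$, and the observation that $a_\Delta^{-1}=1+O(k^{-1})$ leaves $\lim_{k\to\infty}kM_{UR}$ unchanged together give existence, uniqueness, and the chain of equalities in \eqref{qsol}.
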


    We note that $M^{(out)}(k|\sigma_d^{out})$ is a reflected soliton solution, and the reflection mainly comes from $T(k_j)$.
    In order to combine $M^{(out)}(k|\sigma_d^{out})$ with the non-reflective scattering data $\sigma_d^\Delta=\{k_j,{a_\Delta}^{2}(k_j)f(k_j)\}_{j=1}^N$
    corresponds to soliton solutions, we in the definition $\eqref{ak}$, take $\Delta=\Delta_{k_0}^-$, then
    \begin{equation}\nonumber
    a_{\Delta_{k_0}}(k)=\prod_{j\in\Delta_{k_0}^-}\frac{k-k_j}{k-\bar{k}_j},\quad a_{\Delta_{k_0}^+}(k)=\prod_{j\in\Delta_{k_0}^+}\frac{k-k_j}{k-\bar{k}_j},
\end{equation}
    and
    \begin{equation}\nonumber
    T(k)=a_{\Delta_{k_0}^-}(k)^{-1}\delta(k),
\end{equation}
    \begin{equation}\nonumber
    T_{j}(k_j)^{-2}=a_{\Delta{k_0}^-}(k_j)^2\delta_{j}(k_j)^{-2},\quad(1/T_{j})^{\prime}(k_j)^{-2}=a_{\Delta_{k_0}^-}^{\prime}(k_j)^{-2}\delta_{j}(k_j)^2.
\end{equation}
    Therefore, the above RH problem $\ref{RHP7}$ can be rewritten as follows.
    \begin{RHP}\label{RHP10}
    		Find a matrix valued function $M^{(out)}(k;\sigma_d^{out})$ admits:
    		\begin{enumerate}[(i)]
    			\item Analyticity: ~$M^{(out)}(k;\sigma_d^{out})$ is analytical in $k\in\mathbb{C}\setminus(\mathbb{R}\cup\mathcal{Z}\cup\bar{\mathcal{Z}})$;

    			\item Asymptotic behaviors:$M^{(out)}(k;\sigma_d^{out})\sim I,\quad k\to\infty$;

                \item Residue conditions: $M^{(out)}(k;\sigma_d^{out})$ has simple poles at each point in $k\in\mathcal{Z}\cup\bar{\mathcal{Z}}$ satisfying:
                For $j\in\Delta_{k_0}^-$
                \begin{equation}
    \operatorname*{Res}_{k=k_j}M^{(out)}(k;\sigma_d^{out})=\lim_{k\to k_j}M^{(out)}(k;\sigma_d^{out})
    \begin{pmatrix}0&[(a_\Delta)'(k_j)]^{-2}\delta_{1}(k_j)f^{-1}(k_j)\delta_{2}(k_j)e^{-2it\theta(k_j)}\\0&0\end{pmatrix};
                 \end{equation}
               \begin{equation}
    \operatorname*{Res}_{k{=}\bar{k}}M^{(out)}(k;\sigma_d^{out})=\lim_{k\to \bar{k_j}}M^{(out)}(k;\sigma_d^{out})
    \begin{pmatrix}0&0\\-[(a_\Delta)'(\bar{k_j})]^{-2}\delta_{2}(\bar{k_j})(f^\dagger(k_j))^{-1}\delta_{1}(\bar{k_j})e^{2it\theta(\bar{k_j})}&0\end{pmatrix};
\end{equation}

                For $j\in\Delta_{k_0}^+$
                \begin{equation}
    \operatorname*{Res}_{k=k_j}M^{(out)}(k;\sigma_d^{out})=\lim_{k\to k_j}M^{(out)}(k;\sigma_d^{out})
    \begin{pmatrix}0&0\\{a_\Delta}^{2}(k_j)\delta_{2}(k_j)^{-1}f(k_j)\delta_{1}(k_j)^{-1}e^{2it\theta(k_j)}&0\end{pmatrix};
\end{equation}
                \begin{equation}
    \operatorname*{Res}_{k{=}\bar{k}}M^{(out)}(k;\sigma_d^{out})=\lim_{k\to \bar{k_j}}M^{(out)}(k;\sigma_d^{out})
    \begin{pmatrix}0&-[(a_\Delta)'(\bar{k_j})]^{-2}\delta_{1}(\bar{k_j})f^\dagger(k_j)\delta_{2}(\bar{k_j})e^{-2it\theta(\bar{k_j})}\\0&0\end{pmatrix}.
                 \end{equation}

    		\end{enumerate}
    	\end{RHP}

    Note that
    \begin{equation}\nonumber
    M^{(out)}(k|\sigma_d^{\mathrm{out}})=M(k|\sigma_d^{\Delta_{k_0}^-})\begin{pmatrix}\delta_1(k)&0\\0&\delta_2^{-1}(k)\end{pmatrix}|_{\gamma(k)=0,k\in\mathbb{R}^-}.
\end{equation}

     \begin{proposition} The RH problem \ref{RHP10} has a unique solution, and the $N$-soliton solution of the spin-1 GP equation with reflective
      scattering data satisfies the $N$-soliton solution with non-reflective scattering data
    \begin{equation}\nonumber
    q_{sol}(x,t;\sigma_d^{\mathrm{out}})=q_{sol}(x,t;\sigma_d^{\Delta_{k_0}^-}).
\end{equation}
\end{proposition}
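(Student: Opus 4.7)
The plan is to reduce the existence/uniqueness of RH problem \ref{RHP10} to the already-established reflectionless RH problem \ref{RHP9} via the explicit diagonal conjugation indicated just before the statement, namely
\begin{equation}\nonumber
M^{(out)}(k|\sigma_d^{\mathrm{out}})=M(k|\sigma_d^{\Delta_{k_0}^-})\,\mathrm{diag}\bigl(\delta_1(k),\delta_2^{-1}(k)\bigr)\big|_{\gamma(k)=0,\,k\in\mathbb{R}^-}.
\end{equation}
So first I would verify that this ansatz is well-posed: under the assumption $\gamma\equiv 0$ on $\mathbb{R}^-$, the defining RH problems \eqref{k1} and \eqref{k2} reduce to trivial jumps, so $\delta_1(k)$ and $\delta_2(k)$ are analytic across $\mathbb{R}$ and normalized to $I_{2\times 2}$ at infinity. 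Hence the right-hand side is analytic in $\mathbb{C}\setminus(\mathcal{Z}\cup\bar{\mathcal{Z}})$ with the correct normalization, and any jump structure inherited from $M(k|\sigma_d^{\Delta_{k_0}^-})$ is absent. It remains to match the residue conditions.

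Next I would check the residues termwise. For $j\in\Delta_{k_0}^-$, the residue of $M(k|\sigma_d^{\Delta_{k_0}^-})$ at $k_j$ involves the factor $[(a_{\Delta_{k_0}^-})'(k_j)]^{-2} f^{-1}(k_j) e^{-2it\theta(k_j)}$ in the (1,2) block (per RH problem \ref{RHP9}). Conjugating by $\mathrm{diag}(\delta_1(k),\delta_2^{-1}(k))$ inserts a factor $\delta_1(k_j)(\cdot)\delta_2(k_j)$ in the upper-right $2\times 2$ block of the nilpotent residue matrix, which is exactly the residue prescription of RH problem \ref{RHP10}; and symmetrically for $\bar{k}_j$. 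For $j\in\Delta_{k_0}^+$, the same conjugation transfers the lower-left residue data $a_{\Delta_{k_0}^-}^2(k_j)f(k_j)e^{2it\theta(k_j)}$ into $\delta_2^{-1}(k_j)f(k_j)\delta_1^{-1}(k_j)$ on the appropriate side, again matching RH problem \ref{RHP10}. Existence and uniqueness of $M^{(out)}(k|\sigma_d^{\mathrm{out}})$ therefore follow from the corresponding statement for $M(k|\sigma_d^{\Delta_{k_0}^-})$ proved in the preceding proposition (which in turn rests on Lemma \ref{lemma3.5}).

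Finally, for the identification of soliton solutions, I would extract the $k\to\infty$ asymptotics. Since $\delta_1(k),\delta_2(k)\to I_{2\times 2}$ as $k\to\infty$ and the diagonal structure $\mathrm{diag}(\delta_1,\delta_2^{-1})$ leaves the block off-diagonal part of the $k^{-1}$ coefficient unchanged at leading order, the reconstruction formula yields
\begin{equation}\nonumber
2i\lim_{k\to\infty}\bigl[kM^{(out)}(k|\sigma_d^{\mathrm{out}})\bigr]_{UR}=2i\lim_{k\to\infty}\bigl[kM(k|\sigma_d^{\Delta_{k_0}^-})\bigr]_{UR},
\end{equation}
which, together with \eqref{qsol} applied to $\Delta=\Delta_{k_0}^-$, gives $q_{sol}(x,t;\sigma_d^{\mathrm{out}})=q_{sol}(x,t;\sigma_d^{\Delta_{k_0}^-})$.

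The main technical obstacle is the careful bookkeeping in the second step: one must verify that the residue matrices in RH problem \ref{RHP10}, which carry $\delta_{1,2}$ values at the poles on both sides of a nilpotent factor, are produced exactly by the symmetric conjugation of the simpler nilpotent residues in RH problem \ref{RHP9}. This is where the block structure of the residues (strictly upper- or lower-triangular off-diagonal $2\times 2$ blocks) is essential, because it ensures that only the diagonal $\delta$-factors at $k=k_j$ (not their derivatives) enter the transformed residue, so no spurious terms appear. Once this conjugation calculation is carried out unambiguously, the rest of the argument — analyticity, normalization, and the asymptotic extraction — is routine.
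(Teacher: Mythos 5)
Your proposal is correct and follows essentially the same route as the paper: both use the explicit relation $M^{(out)}(k|\sigma_d^{\mathrm{out}})=M(k|\sigma_d^{\Delta_{k_0}^-})\operatorname{diag}(\delta_1,\delta_2^{-1})$ together with $\delta_j(k)\to I_{2\times2}$ at infinity (so the block-diagonal factor does not affect the $UR$ block of the $k^{-1}$ coefficient) to conclude $q_{sol}(x,t;\sigma_d^{\mathrm{out}})=q_{sol}(x,t;\sigma_d^{\Delta_{k_0}^-})$ via the reconstruction formula. The paper's proof consists only of this final chain of limits; your additional verification of analyticity and of the conjugated residue conditions is detail the paper leaves implicit rather than a different argument.
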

    \begin{proof}
    \begin{equation}\nonumber
    \begin{gathered}
q_{sol}(x,t;\sigma_{d}^{out}) =2i\lim_{k\to\infty}[kM^{(out)}(k|\sigma_d^{out})]_{UR}
=2i\lim_{k\to\infty}[kM^{\Delta_{k_0}^-}(k|\sigma_d^{\Delta_{k_0}})\delta(k)^{\sigma_3}]_{UR} \\
=2i\lim_{k\to\infty}[kM^{\Delta_{k_0}^-}(k|\sigma_d^{\Delta_{k_0}^-})]_{UR}=q_{sol}(x,t,\sigma_d^{\Delta_{k_0}^-}).
\end{gathered}
\end{equation}
\end{proof}
    We now consider the long-time behavior of soliton solutions. Not all discrete spectrum have contribution as $t\to\infty.$ Give pairs points
     $x_1\leq x_2\in\mathbb{R}$ and velocities $v_1\leq v_2\in\mathbb{R}$, we define a cone
    \begin{equation}\nonumber
    S(x_1,x_2,v_1,v_2)=\{(x,t):x=x_0+vt, x_0\in[x_1,x_2], v\in[v_1,v_2]\}.
    \end{equation}
    Denote $\mathcal{I}=[-\frac{v_1}4,-\frac{v_2}4],$ $\Delta_{\mathcal{I}}^-=\{j:Rek_j<-v_2/4\}, \Delta_{\mathcal{I}}^+=\{j:Rek_j>-v_1/4\},$ then we
     have the following proposition

    \centerline{\begin{tikzpicture}[scale=0.8]
\path [fill=pink] (-1,3)--(0,0) to (2,0) -- (3,3);
\path [fill=pink] (-1,-3)--(0,0) to (2,0) -- (3,-3);
\draw[-][thick](-4,0)--(-3,0);
\draw[-][thick](-3,0)--(-2,0);
\draw[-][thick](-2,0)--(-1,0);
\draw[-][thick](-1,0)--(0,0);
\draw[-][thick](0,0)--(1,0);
\draw[-][thick](1,0)--(2,0);
\draw[-][thick](2,0)--(3,0);
\draw[-][thick](3,0)--(4,0);
\draw[->][thick](4,0)--(5,0)[thick]node[right]{$x$};
\draw[<-][thick](-2,3)[thick]node[right]{$t$}--(-2,2);
\draw[-][thick](-2,2)--(-2,1);
\draw[-][thick](-2,1)--(-2,0);
\draw[-][thick](-2,0)--(-2,-1);
\draw[-][thick](-2,-1)--(-2,-2);
\draw[-][thick](-2,-2)--(-2,-3);
\draw[fill] (0,0) circle [radius=0.08];
\draw[fill] (2,0) circle [radius=0.08];
\draw[fill] (-0.5,0)node[below]{$x_{2}$};
\draw[fill] (2.5,0)node[below]{$x_{1}$};
\draw[fill] (3.5,3)node[above]{$x=v_{2}t+x_{2}$};
\draw[fill] (3,-3)node[below]{$x=v_{1}t+x_{2}$};
\draw[fill] (-1,-3)node[below]{$x=v_{2}t+x_{1}$};
\draw[fill] (-2,3)node[above]{$x=v_{2}t+x_{1}$};
\draw[fill] (1,2)node[below]{$S$};
\draw[-][thick](-1,3)--(0,0);
\draw[-][thick](3,3)--(2,0);
\draw[-][thick](-1,-3)--(0,0);
\draw[-][thick](3,-3)--(2,0);
\end{tikzpicture}}\label{Figure 4.}
\centerline{\noindent {\small \textbf{Figure 3.4} Space-time $S(x_{1},x_{2},v_{1},v_{2})$.}}

    \begin{proposition} Given scattering  data $\sigma_d^\Delta=\{k_j,{a_\Delta}^{2}(k_j)\delta_{2}(k_j)^{-1}f(k_j)\delta_{1}(k_j)^{-1}\}_{j=1}^N$.
    At $t\to+\infty $ with $(x,t)\in S(x_1,x_2,v_1,v_2)$, we have
    \begin{equation}\nonumber
    M^{\Delta_{k_0}}(k|\sigma_d^{\Delta_{k_0}})=(I+O(e^{-8\mu t}))M^{\Delta_{\mathcal{I}}}(k|\widehat{\sigma}_d(\mathcal{I})),
\end{equation}
    where $\mu=\mu(\mathcal{I})=\min_{k_j\in\mathcal{Z}\setminus\mathcal{Z}(\mathcal{I})}\{Im(k_j)\mathrm{dist}(Rek_j,\mathcal{I})\}$.

\end{proposition}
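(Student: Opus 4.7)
The key idea is to show that the soliton contributions associated to discrete eigenvalues $k_j$ outside $\mathcal{I}$ are exponentially suppressed whenever $(x,t)$ lies inside the cone $S$. To this end, I would introduce the ratio
\begin{equation*}
E(k) = M^{\Delta_{k_0}}(k \mid \sigma_d^{\Delta_{k_0}}) \bigl[M^{\Delta_{\mathcal{I}}}(k \mid \widehat{\sigma}_d(\mathcal{I}))\bigr]^{-1},
\end{equation*}
and show that $E(k) = I + \mathcal{O}(e^{-8\mu t})$ uniformly in $k$, which immediately yields the claim. Both factors are piecewise meromorphic, free of jumps on $\mathbb{R}$ (reflectionless data), and normalized to $I_{4\times 4}$ at infinity, so $E(k)$ inherits those properties and its only possible singularities lie in $\mathcal{Z} \cup \bar{\mathcal{Z}}$.

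The first step is to verify that $\widehat{\sigma}_d(\mathcal{I})$ is chosen precisely so that the residues of $M^{\Delta_{k_0}}$ and $M^{\Delta_{\mathcal{I}}}$ at each $k_j \in \mathcal{Z}(\mathcal{I})$ cancel. The natural candidate is built from $\widehat{\sigma}_d(\mathcal{I}) = \{k_j, a_{\Delta_{\mathcal{I}}}^{2}(k_j)\delta_2^{-1}(k_j) f(k_j) \delta_1^{-1}(k_j)\}_{k_j \in \mathcal{Z}(\mathcal{I})}$, where the modified Blaschke factor $a_{\Delta_{\mathcal{I}}}$ incorporates the dressing coming from the eigenvalues that have been dropped. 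With this choice, the residue conditions in RH Problem \ref{RHP9} match on both sides of $E(k)$, so the points in $\mathcal{Z}(\mathcal{I}) \cup \bar{\mathcal{Z}}(\mathcal{I})$ become removable singularities.

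For the remaining discrete points $k_j \in \mathcal{Z} \setminus \mathcal{Z}(\mathcal{I})$, only $M^{\Delta_{k_0}}$ has a pole, and the residue carries the phase factor $e^{\pm 2it\theta(k_j)}$. A direct computation based on \eqref{sss} gives $|e^{\pm 2it\theta(k_j)}| = e^{-8t\,\mathrm{Im}(k_j)(\mathrm{Re}(k_j)-k_0)}$. For $(x,t) \in S(x_1,x_2,v_1,v_2)$ we have $k_0 = -x/(4t) \in \mathcal{I}$ (up to a negligible $\mathcal{O}(1/t)$ correction), hence $|\mathrm{Re}(k_j)-k_0| \geq \mathrm{dist}(\mathrm{Re}(k_j),\mathcal{I})$, and the exponential is bounded by $e^{-8\mu t}$ by the very definition of $\mu$.

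The last step is to convert each pole of $E(k)$ at $k_j \in \mathcal{Z}\setminus\mathcal{Z}(\mathcal{I})$ and its conjugate into a jump across a small circle surrounding the point; this yields a pure RH problem for $E(k)$ on a compact contour whose jump matrix is $I + \mathcal{O}(e^{-8\mu t})$. A standard small-norm argument (Beals--Coifman or Neumann series on the associated singular integral operator) then produces $E(k) = I + \mathcal{O}(e^{-8\mu t})$ uniformly in $k$. The main obstacle I expect is the first step: pinning down the exact form of $\widehat{\sigma}_d(\mathcal{I})$ and verifying the residue cancellation is a bookkeeping exercise involving the interplay between the Blaschke factors $a_{\Delta_{k_0}^-}$, $a_{\Delta_{\mathcal{I}}}$ and the partial $\delta_j$-conjugations; once completed, the remaining steps are routine applications of steepest-descent machinery.
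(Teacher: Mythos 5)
Your proposal is correct and follows essentially the same route as the paper: the same key estimate showing the residue coefficients are $\mathcal{O}(e^{-8\mu t})$ for $k_j\in\mathcal{Z}\setminus\mathcal{Z}(\mathcal{I})$ and $\mathcal{O}(1)$ for $k_j\in\mathcal{Z}(\mathcal{I})$, the same conversion of the suppressed poles into jumps on small disks, and the same small-norm argument for the resulting error function. The only cosmetic difference is ordering: the paper first conjugates $M^{\Delta_{k_0}}$ by the disk-supported matrix $\omega(k)$ to trade the exterior poles for circle jumps and only then forms the ratio with $M^{\Delta_{\mathcal{I}}}$, whereas you form the ratio first and then remove its remaining poles.
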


\centerline{\begin{tikzpicture}[scale=0.8]
\path [fill=pink] (1.5,3)--(-1.5,3) to (-1.5,-3) -- (1.5,-3);
\draw[-][thick](-4,0)--(-3,0);
\draw[-][thick](-3,0)--(-2,0);
\draw[-][thick](-2,0)--(-1,0);
\draw[-][thick](-1,0)--(0,0);
\draw[-][thick](0,0)--(1,0);
\draw[-][thick](1,0)--(2,0);
\draw[-][thick](2,0)--(3,0);
\draw[->][thick](3,0)--(4,0)[thick]node[right]{$Rek$};
\draw[-][thick](-1.5,3)--(-1.5,2);
\draw[-][thick](-1.5,2)--(-1.5,1);
\draw[-][thick](-1.5,1)--(-1.5,0);
\draw[-][thick](-1.5,0)--(-1.5,-1);
\draw[-][thick](-1.5,-1)--(-1.5,-2);
\draw[-][thick](-1.5,-2)--(-1.5,-3);
\draw[-][thick](1.5,3)--(1.5,2);
\draw[-][thick](1.5,2)--(1.5,1);
\draw[-][thick](1.5,1)--(1.5,0);
\draw[-][thick](1.5,0)--(1.5,-1);
\draw[-][thick](1.5,-1)--(1.5,-2);
\draw[-][thick](1.5,-2)--(1.5,-3);
\draw[fill] (1.5,0) circle [radius=0.05];
\draw[fill] (-1.5,0) circle [radius=0.05];
\draw[fill] (2,0)node[below]{$-\frac{v_{1}}{4}$};
\draw[fill] (-2,0)node[below]{$-\frac{v_{2}}{4}$};
\draw[fill] (3,1)node[below]{$k_{5}$} circle [radius=0.05];
\draw[fill] (3,-1)node[below]{$\bar{k_{5}}$} circle [radius=0.05];
\draw[fill] (-1,-1)node[below]{$\bar{k_{6}}$} circle [radius=0.05];
\draw[fill] (0.5,0.5)node[below]{$k_{7}$} circle [radius=0.05];
\draw[fill] (0.5,-0.5)node[below]{$\bar{k_{7}}$} circle [radius=0.05];
\draw[fill] (-1,1)node[below]{$k_{6}$} circle [radius=0.05];
\draw[fill] (2,3)node[below]{$k_{2}$} circle [radius=0.05];
\draw[fill] (2,-3)node[below]{$\bar{k_{2}}$} circle [radius=0.05];
\draw[fill] (0.5,2.5)node[below]{$k_{1}$} circle [radius=0.05];
\draw[fill] (0.5,-2.5)node[below]{$\bar{k_{1}}$} circle [radius=0.05];
\draw[fill] (-0.5,2.8)node[below]{$k_{3}$} circle [radius=0.05];
\draw[fill] (-0.5,-2.8)node[below]{$\bar{k_{3}}$} circle [radius=0.05];
\draw[fill] (-3,1.5)node[below]{$k_{4}$} circle [radius=0.05];
\draw[fill] (-3,-1.5)node[below]{$\bar{k_{4}}$} circle [radius=0.05];
\end{tikzpicture}}\label{Figure 3.5}
\centerline{\noindent {\small \textbf{Figure 3.5} For fixed $v_{1}<v_{2}$, $I=\left[-\frac{v_{2}}{4},-\frac{v_{1}}{4}\right]$.}}

    \begin{proof}
    As for $t>0$, $(x,t)\in S(x_1,x_2,v_1,v_2)$, we obtain
    \begin{equation}\nonumber
    -v_2/4<k_0+x_0/(4t)<-v_1/4.
    \end{equation}
    Since $x_1<x_0<x_2,$ we know that $x_0/(4t)\to0,$ and $t\to\infty,$ so for sufficiently large $t$, we have $-v_2/4\leqslant k_0\leqslant-v_1/4.$

    In RH problem \ref{RHP9}, let $\Delta=\Delta_{k_0}^-$, then $\bigtriangledown=\Delta_{k_0}^+$.

    $\begin{pmatrix}i\end{pmatrix}$ For $j\in\Delta_{k_0}^-=\Delta_{\mathcal{I}}^-\cup\Delta_{k_0}^-(\mathcal{I}),$ we have
    \begin{equation}\nonumber
    N_j^{\Delta_{k_0}^-}=\begin{pmatrix}0&[(a_\Delta)'(k_j)]^{-2}f^{-1}(k_j)e^{-2it\theta(k_j)}\\0&0\end{pmatrix}, j\in\Delta_{k_0}^-.
\end{equation}
    Particularly, for $j\in\Delta_{\mathcal{I}}\longleftrightarrow Rek_j<-v_2/4,$
    \begin{equation}\nonumber
    \begin{aligned}
-Im(k_{j})Re(k_{j}+v/4)& \geqslant\min_{k_{j}\in\mathcal{Z}\setminus\mathcal{Z}(\mathcal{I})}\{Im(k_{j})\mathrm{dist}(Rek_{j},\mathcal{I})\} \\
&=\mu>-\min_{k_{j}\in\mathcal{Z}^{-}(\mathcal{I})}\{Im(k_{j})(Rek_{j}+v_{2}/4)\}>0.
\end{aligned}
\end{equation}
    Therefore
    \begin{equation}\nonumber
    |[(a_\Delta)'(k_j)]^{-2}f^{-1}(k_j)e^{-2it\theta(k_j)}|=|f^{-1}(k_j)||e^{2x_{0}Im(k_{j})}e^{8tIm(k_{j})Rek_{j}+v/2)}|=\mathcal{O}(e^{-8\mu t}).
\end{equation}
    For $j\in\Delta_{k_0}^-(\mathcal{I})\Longleftrightarrow-v_2/4\leqslant Re(k_j)\leqslant k_0,$
    \begin{equation}\nonumber
    |[(a_\Delta)'(k_j)]^{-2}f^{-1}(k_j)e^{-2it\theta(k_j)}|\leqslant ce^{-8t\text{Im}(k_j)(\text{Re}(k_j)-k_0)}=\mathcal{O}(1).
\end{equation}

    $\begin{pmatrix}ii\end{pmatrix}$ For $j\in\Delta_{k_0}^+=\Delta_{\mathcal{I}}^+\cup\Delta_{k_0}^+(\mathcal{I}),$ we can get
    \begin{equation}\nonumber
    N_j^{\Delta_{k_0}^+}=\begin{pmatrix}0&0\\{a_{\Delta_{k_0}^+}}^{2}(k_j)f(k_j)e^{2it\theta(k_j)}&0\end{pmatrix}, j\in\Delta_{k_0}^+.
\end{equation}
    Particularly, for $j\in\Delta_{\mathcal{I}}^+\longleftrightarrow Rek_j>-v_1/4,$
    \begin{equation}\nonumber
    \begin{aligned}
Im(k_{j})Re(k_{j}+v/4)& \geqslant\min_{k_{j}\in\mathcal{Z}\setminus\mathcal{Z}(\mathcal{I})}\{Im(k_{j})\mathrm{dist}(Rek_{j},\mathcal{I})\} \\
&=\mu>\min_{k_{j}\in\mathcal{Z}^{-}(\mathcal{I})}\{Im(k_{j})(Rek_{j}+v_{1}/4)\}>0.
\end{aligned}
\end{equation}
    Therefore
    \begin{equation}\nonumber
    |{a_{\Delta_{k_0}^+}}^{2}(k_j)f(k_j)e^{2it\theta(k_j)}|=|{a_{\Delta_{k_0}^+}}^{2}(k_j)||f^{-1}(k_j)||e^{2x_{0}Im(k_{j})}e^{-8tIm(k_{j})Rek_{j}+v/2)}|
    =\mathcal{O}(e^{-8\mu t}).
\end{equation}
    For $j\in\Delta_{k_0}^+(\mathcal{I})\Longleftrightarrow k_0\leqslant Re(k_j)\leqslant-v_2/4,$
    \begin{equation}\nonumber
    |{a_{\Delta_{k_0}^+}}^{2}(k_j)f(k_j)e^{2it\theta(k_j)}|\leqslant ce^{-8t\text{Im}(k_j)(\text{Re}(k_j)-k_0)}=\mathcal{O}(1).
\end{equation}
    Thus for $t\to\infty $, and $(x,t)\in S(x_1,x_2,v_1,v_2)$, we have

    \begin{equation}\nonumber
    ||N_j^{\Delta_{\mathcal{I}}^\pm}||=\begin{cases}\mathcal{O}(1)&k_j\in\mathcal{Z}(\mathcal{I}),\\
    \mathcal{O}(e^{-8\mu t})&k_j\in\mathcal{Z}\setminus\mathcal{Z}(\mathcal{I}).\end{cases}
\end{equation}
    For each discrete spectrum $k_j \in \mathcal{Z}\setminus\mathcal{Z}(\mathcal{I})$, make disks $D_{k}$ of sufficiently small radius so that they do not
     intersect each other and define functions
    \begin{equation}\nonumber
    \omega(k)=\begin{cases}I_4-\frac{1}{k-k_j}N_j^{\Delta_{\mathcal{I}}^\pm},&\quad k\in D_j,\\\\I_4-\frac{1}{k-\bar{k}_j}\bar{N}_j^{\Delta_{\mathcal{I}}^\pm},
    &\quad k\in\overline{D}_j,\\\\I_4,&\quad\text{otherwise.}\end{cases}
\end{equation}
    Then we introduce a new transformation to convert the poles $k_j \in \mathcal{Z}\setminus\mathcal{Z}(\mathcal{I})$ to jumps which will decay to identity
    matrix exponentially,
    \begin{equation}\label{MM}
    \widehat{M}^{\Delta_{k_0}^\pm}(k|\sigma_d^{\Delta_{k_0}^\pm})=M^{\Delta_{k_0}^\pm}(k|\sigma_d^{\Delta_{k_0}^\pm})\omega(k).
\end{equation}
    Direct calculation shows that
    \begin{equation}\nonumber
    \widehat{M}_+^{\Delta_{k_0}^\pm}(k|\sigma_d^{\Delta_{k_0}^\pm})=\widehat{M}_-^{\Delta_{k_0}^\pm}(k|\widehat{\sigma}_d)\widehat{V}(k),\quad k\in\widehat{\Sigma}
    =\cup_{k_l\in\mathcal{Z}\setminus\mathcal{I}(\mathcal{I})}\partial D_j\cup\partial\bar{D}_j,
\end{equation}
    where jump matrices $\widehat{V}(k)$ satisfy
    \begin{equation}\nonumber
    ||\widehat{V}(k)-I||_{L^\infty(\widehat{\Sigma})}=\mathcal{O}(e^{-8\mu t}).
\end{equation}
    As in RH problem \ref{RHP9} again, make $\Delta=\Delta_\mathcal{I}$, then the $M^{\Delta_{\mathcal{I}}}(k|\widehat{\sigma}_d(\mathcal{I}))$
    and $\widehat M^{\Delta_{\mathcal{I}}^{\pm}}(k|\widehat{\sigma}_d)$ have the same poles and residue conditions in $\Delta_{\mathcal{I}}$, so
    \begin{equation}\nonumber
    \mathcal{E}(k)=\widehat{M}^{\Delta_{k_0}^\pm}(k|\sigma_d^{\Delta_{k_0}^\pm})\left[M^{\Delta_\mathcal{I}}(k|\widehat{\sigma}_d(\mathcal{I}))\right]^{-1},
\end{equation}
    has no poles and satisfies jump condition
    \begin{equation}\label{MMM}
    \mathcal{E}_+(k)=\mathcal{E}_-(k)V_\mathcal{E},
\end{equation}
    where $V_{\mathcal{E}}=M^{\Delta_\mathcal{I}}\widehat{V}[M^{\Delta_\mathcal{I}}]^{-1}\sim\widehat{V}$ satisfies
    \begin{equation}\nonumber
    ||V_{\mathcal{E}}(k)-I||_{L^{\infty}(\widehat{\Sigma})}=\mathcal{O}(e^{-8\mu t}).
\end{equation}
    Based on the properties of small norm RH problem, we know that $\mathcal{E}(k)$ exists and
    \begin{equation}\nonumber
    \mathcal{E}(k)=I+(O)\left(e^{-8\mu t}\right),\quad t\to+\infty.
\end{equation}
    Finally, according to \eqref{MM} and \eqref{MMM}, we obtain the following conclusion
    \begin{equation}\nonumber
    M^{\Delta_{k_0}^\pm}(k|\widehat{\sigma}_d)=(I+\mathcal{O}(e^{-8\mu t}))M^{\Delta_{\mathcal{I}}}(k|\widehat{\sigma}_d(\mathcal{I})).
\end{equation}
\end{proof}

    \begin{corollary} Suppose that $q_{sol}$ is the soliton solutions corresponding to the scattering data $\sigma_d^\Delta=\{k_j,{a_\Delta}^{2}(k_j)f(k_j)\}_{j=1}^N$
    of the spin-1 GP equation, as $(x,t)\in S(x_1,x_2,v_1,v_2)$, and $t\to+\infty$,
    \begin{equation}\label{775}
    q_{sol}(x,t;\sigma_d^{out})=q_{sol}(x,t;\sigma_d^{\Delta_{k_0}^\pm})=q_{sol}(x,t;\widehat{\sigma}_d(\mathcal{I}))+\mathcal{O}(e^{-8\mu t}).
\end{equation}
\end{corollary}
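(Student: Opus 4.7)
The plan is to deduce the corollary directly from the asymptotic identity
$$M^{\Delta_{k_0}^\pm}(k|\widehat{\sigma}_d)=(I+\mathcal{O}(e^{-8\mu t}))M^{\Delta_{\mathcal{I}}}(k|\widehat{\sigma}_d(\mathcal{I}))$$
established in the preceding proposition, together with the reconstruction formula \eqref{q1} and the already-proven equality $q_{sol}(x,t;\sigma_d^{\mathrm{out}})=q_{sol}(x,t;\sigma_d^{\Delta_{k_0}^-})$. So the first equality in \eqref{775} is not new; it is simply a restatement of that earlier identification, and only the second equality genuinely needs to be verified.

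First I would unpack the asymptotic matrix identity in a neighborhood of $k=\infty$. Multiplying both sides by $k$ and taking $k\to\infty$, the $\mathcal{O}(e^{-8\mu t})$ correction is uniform on compact sets in $k$ and, by the small-norm estimate together with the Cauchy-type integral representation of $\mathcal{E}(k)$ along $\widehat{\Sigma}$ (which lies in a bounded region around the excluded poles $k_j\in\mathcal{Z}\setminus\mathcal{Z}(\mathcal{I})$), the expansion $\mathcal{E}(k)=I+\mathcal{E}_1/k+\mathcal{O}(k^{-2})$ holds with $\mathcal{E}_1=\mathcal{O}(e^{-8\mu t})$. Therefore extracting the $UR$-block at the $1/k$ coefficient yields
$$2i\lim_{k\to\infty}\bigl(kM^{\Delta_{k_0}^\pm}(k|\widehat{\sigma}_d)\bigr)_{UR}=2i\lim_{k\to\infty}\bigl(kM^{\Delta_{\mathcal{I}}}(k|\widehat{\sigma}_d(\mathcal{I}))\bigr)_{UR}+\mathcal{O}(e^{-8\mu t}),$$
which by the reconstruction formula is precisely $q_{sol}(x,t;\sigma_d^{\Delta_{k_0}^\pm})=q_{sol}(x,t;\widehat{\sigma}_d(\mathcal{I}))+\mathcal{O}(e^{-8\mu t})$.

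Next I would patch the first equality. By the earlier proposition identifying the RH problem with scattering data $\sigma_d^{\mathrm{out}}$ with the one with the non-reflective data $\sigma_d^{\Delta_{k_0}^-}$, we already have $q_{sol}(x,t;\sigma_d^{\mathrm{out}})=q_{sol}(x,t;\sigma_d^{\Delta_{k_0}^-})$ as an exact equality. Chaining this with the $\mathcal{O}(e^{-8\mu t})$ asymptotic obtained in the previous paragraph yields the full chain claimed in \eqref{775}. The uniformity in $(x,t)\in S(x_1,x_2,v_1,v_2)$ comes from the fact that $\mu=\mu(\mathcal{I})$ depends only on the cone through $\mathcal{I}=[-v_2/4,-v_1/4]$, not on the individual $x_0\in[x_1,x_2]$, while the $x_0$-dependence enters only through the factor $e^{2x_0\,\mathrm{Im}\,k_j}$ in the residue norms, which is bounded on the cone.

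The main (mild) obstacle is to make sure the $\mathcal{O}(e^{-8\mu t})$ factor survives the passage from matrix-level to reconstruction-level estimates uniformly in the cone. This requires checking that $M^{\Delta_{\mathcal{I}}}(k|\widehat{\sigma}_d(\mathcal{I}))$ is bounded uniformly in $(x,t)\in S$ on a neighborhood of infinity, so that the product $\mathcal{E}(k)M^{\Delta_{\mathcal{I}}}$ inherits the exponential smallness in the correction term. This boundedness follows from the fact that $M^{\Delta_{\mathcal{I}}}$ is determined by finitely many poles in $\mathcal{Z}(\mathcal{I})$ lying in a fixed compact set, and its algebraic-integral representation (analogous to \eqref{ds}) has coefficients bounded on the cone; once this is in hand, the remainder of the argument is purely algebraic.
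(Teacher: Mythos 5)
Your proposal is correct and follows essentially the same route the paper (implicitly) takes: the corollary is an immediate consequence of the exact identity $q_{sol}(x,t;\sigma_d^{\mathrm{out}})=q_{sol}(x,t;\sigma_d^{\Delta_{k_0}^-})$ from the earlier proposition together with the matrix asymptotics $M^{\Delta_{k_0}^\pm}=(I+\mathcal{O}(e^{-8\mu t}))M^{\Delta_{\mathcal{I}}}$, read off at the $1/k$ coefficient of the $UR$ block via the reconstruction formula. Your additional remarks on the smallness of $\mathcal{E}_1$ coming from the Cauchy integral over the bounded contour $\widehat{\Sigma}$ and on uniformity in the cone are sound and fill in details the paper leaves tacit.
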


\subsection{\it{Asymptotic analysis on a pure $\bar{\partial}\text{-problem}$}}\label{s:3.4} 	
    Now we consider the long time asymptotics behavior of $M^{(3)}(k;x,t)$. Note that
    \begin{equation}\label{MM3}
    M^{(3)}(k)=M^{(2)}(k)(M_{RHP}^{(2)})^{-1}(k).
   \end{equation}
   \begin{RHP}\label{RHP5}
    		Find a matrix valued function $M^{(3)}(k)=M^{(3)}(k;x,t)$ admits:
    		\begin{enumerate}[(i)]
    			\item ~$M^{(3)}(k;x,t))$ is continuous in $k\in\mathbb{C}\setminus(\mathbb{R}\cup\mathcal{Z}\cup\bar{\mathcal{Z}})$;
    			\item $\bar{\partial}M^{(3)}(k)=M^{(3)}(k)W^{(3)}(k), k\in\mathbb{C}$;

    			\item $M^{(3)}(k)\sim I,\quad k\to\infty$.

    		\end{enumerate}
    	\end{RHP}
    where
    \begin{equation}\nonumber
    W^{(3)}=M_{RHP}^{(2)}(k)\bar{\partial}R^{(2)}M_{RHP}^{(2)}(k)^{-1},
    \end{equation}
    \begin{equation}\nonumber
    W^{(3)}(k)=\left\{\begin{array}{ccc}M_{RHP}^{(2)}(k)\left(\begin{array}{cc}0&0\\-\bar{\partial}R_{1}e^{2it\theta}&0\end{array}\right)M_{RHP}^{(2)}(k)^{-1},
    &k\in\Omega_{1},\\\\M_{RHP}^{(2)}(k)\left(\begin{array}{cc}0&-\bar{\partial}R_{3}e^{-2it\theta}\\0&0\end{array}\right)M_{RHP}^{(2)}(k)^{-1},&k\in\Omega_{3},\\\\
    M_{RHP}^{(2)}(k)\left(\begin{array}{cc}0&0\\\bar{\partial}R_{4}e^{2it\theta}&0\end{array}\right)M_{RHP}^{(2)}(k)^{-1},&k\in\Omega_{4},\\\\
    M_{RHP}^{(2)}(k)\left(\begin{array}{cc}0&\bar{\partial}R_{6}e^{-2it\theta}\\0&0\end{array}\right)M_{RHP}^{(2)}(k)^{-1},&k\in\Omega_{6},\\\\
    \left(\begin{array}{cc}0&0\\0&0\end{array}\right),&k\in\Omega_{2}\cup\Omega_{5}.\end{array}\right.
    \end{equation}

    The $\bar{\partial}$-problem of $M^{(3)}$ is equivalent to
    \begin{equation}\label{M3}
    M^{(3)}(k)=I-\frac1\pi\iint_\mathbb{C}\frac{M^{(3)}(s)W^{(3)}(s)}{s-k}dA(s),
\end{equation}
    where $dA(s)$ is the Lebesgue measure. Further, we write the equation \eqref{M3} in operator form
    \begin{equation}\nonumber
    (I-S)M^{(3)}(k)=I,
\end{equation}
    where $S$ is the Cauchy operator
    \begin{equation}\nonumber
    S[f](k)=-\frac{1}{\pi}\iint\limits_{\mathbb{C}}\frac{M^{(3)}(s)W^{(3)}(s)}{s-k}dA(s).
\end{equation}
    \begin{proposition}
    For large time t,
    \begin{equation}\nonumber
    ||S||_{L^\infty\to L^\infty}\leqslant ct^{-1/4},
\end{equation}
    which implies that the operator $(I-S)^{-1}$ is invertible and the solution of pure $\bar{\partial}$-problem exists and is unique.
    \end{proposition}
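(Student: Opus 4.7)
The plan is to control $\|S\|_{L^\infty \to L^\infty}$ by the supremum over $k$ of a two-dimensional Cauchy-type integral of $|W^{(3)}|$, then extract $t^{-1/4}$ decay from the $\bar\partial$-bound on $R^{(2)}$ combined with the exponential decay of $e^{\pm 2it\theta}$. Interpreting the integral equation $(I-S)M^{(3)}=I$ in the standard way, so that $S$ acts on $L^\infty$ matrices by $(S[f])(k)=-\frac{1}{\pi}\iint_{\mathbb{C}} f(s)\,W^{(3)}(s)/(s-k)\,dA(s)$, one immediately has
$$\|S\|_{L^\infty \to L^\infty}\leq \sup_{k\in\mathbb{C}}\frac{1}{\pi}\iint_{\mathbb{C}}\frac{|W^{(3)}(s)|}{|s-k|}\,dA(s).$$
The outer model $M^{(out)}$ is uniformly bounded away from the simple poles $\mathcal{Z}\cup\bar{\mathcal{Z}}$, and the parametrix contribution together with the small-norm error $E(k)$ leaves this boundedness intact on the support of $\bar\partial R^{(2)}$. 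Therefore $|W^{(3)}(s)|\leq C\,|\bar\partial R^{(2)}(s)|\,e^{-2t|\mathrm{Im}\,\theta(s)|}$, and the area integral reduces to a sum over the four sectors $\Omega_1,\Omega_3,\Omega_4,\Omega_6$.

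By the symmetry between the four sectors it suffices to treat $\Omega_1$. I would parametrize $s=k_0+u+iv$ with $0\leq v\leq u$; the identity $\theta(s)=\theta(k_0)+2(s-k_0)^2$ gives $\mathrm{Im}\,\theta(s)=4uv$, so $|e^{2it\theta(s)}|=e^{-8tuv}$. Focusing on the dominant singular term in
$$|\bar\partial R_1(s)|\lesssim |\bar\partial\chi_{\mathcal{Z}}(s)|+|\gamma'(\mathrm{Re}\,s)|+|s-k_0|^{-1/2},$$
I would split $I_1(k):=\iint_{\Omega_1}|W^{(3)}|/|s-k|\,dA$ into three pieces, estimating each by Hölder's inequality in $u$ with an exponent $p>2$. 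The three ingredients are the Cauchy-kernel bound $\|1/|s-k|\|_{L^p_u(\mathbb{R})}\lesssim |v-\mathrm{Im}\,k|^{1/p-1}$, the explicit computation $\|(u^2+v^2)^{-1/4}\|_{L^q_u(v,\infty)}\leq C\,v^{1/q-1/2}$ valid for $q>2$, the $L^2_u$ bound on $\gamma'$ coming from $\gamma\in H^1(\mathbb{R})$, and the compact support of $\bar\partial\chi_{\mathcal{Z}}$ away from $\mathcal{Z}\cup\bar{\mathcal{Z}}$.

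After the Hölder step each piece reduces to a one-dimensional $v$-integral of the form $\int_0^\infty v^{\alpha}|v-\mathrm{Im}\,k|^{1/p-1}e^{-8tuv}\,dv$, which I would handle by splitting around $v=\mathrm{Im}\,k$ and performing the Gaussian rescaling $v\mapsto v/\sqrt{t}$; a short computation then shows each piece is $\mathcal{O}(t^{-1/4})$ uniformly in $k$ and $u$, and the remaining $u$-integration preserves the rate. The main obstacle is the Hölder balance in the singular piece: $p$ must exceed $2$ for the Cauchy-kernel bound, but must remain close enough to $2$ that $\|(u^2+v^2)^{-1/4}\|_{L^q_u}$ stays finite near $s=k_0$; it is precisely this joint constraint, interacting with the Gaussian factor $e^{-8tuv}$, that pins the decay rate at $t^{-1/4}$ rather than anything better. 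Once $\|S\|_{L^\infty\to L^\infty}\leq ct^{-1/4}<1$ for $t$ sufficiently large, the Neumann series $(I-S)^{-1}=\sum_{n\geq 0}S^n$ converges in $B(L^\infty)$, so $M^{(3)}=(I-S)^{-1}[I]$ exists and is unique, completing the proof.
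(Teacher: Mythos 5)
Your overall strategy --- bounding $\|S\|_{L^\infty\to L^\infty}$ by $\sup_k\frac1\pi\iint|W^{(3)}(s)|\,|s-k|^{-1}dA(s)$, reducing to $\Omega_1$, splitting according to the terms in the bound on $|\bar\partial R_1|$, and treating each piece by Cauchy--Schwarz/H\"older in $u$ followed by a Gaussian rescaling in $v$ --- is exactly the paper's approach for the pieces you do treat. But there is a genuine gap: you have dropped the most singular term. The paper's estimate \eqref{R1} reads
\begin{equation}\nonumber
|\bar{\partial}R_j(k)|\lesssim|\bar{\partial}\chi_{\mathcal{Z}}(k)|+|\gamma^{\prime}(\mathrm{Re}(k))|+|k-k_0|^{-1/2}+|k-k_0|^{-1},
\end{equation}
and the last term $|s-k_0|^{-1}$ (which originates from the discrepancy between the matrix functions $\delta_j$ and the scalar $\det\delta$, i.e.\ the $f_1,f_2$ contributions in the proof of \eqref{R1}) cannot be absorbed into your three-piece decomposition. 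It produces the integral $I_4=\iint_{\Omega_1}|s-k_0|^{-1}|s-k|^{-1}e^{-8tuv}\,dA(s)$, for which neither Cauchy--Schwarz nor your H\"older balance closes uniformly in $k$: the factor $|s-k_0|^{-1}$ fails to be square-integrable (in $u$, over $(v,1)$) as $v\to 0$, so the argument only works when the relevant region stays a fixed distance $a$ from $k_0$. This is precisely why the paper proves the $t^{-1/4}$ bound for $I_4$ only on $|k-k_0|\geq a$ (Section 3.5) and must abandon the $\bar\partial$ method entirely on $|k-k_0|<a$, reverting there to the classical Deift--Zhou analysis (Section 3.6). By calling $|s-k_0|^{-1/2}$ the ``dominant singular term'' you have silently discarded the one obstruction that dictates the two-region structure of the whole paper.

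A secondary, smaller issue: your exponent bookkeeping in the H\"older step is internally inconsistent. You require $p>2$ for the Cauchy-kernel bound $\||s-k|^{-1}\|_{L^p_u}\lesssim|v-\mathrm{Im}\,k|^{1/p-1}$ and simultaneously $q>2$ for $\|(u^2+v^2)^{-1/4}\|_{L^q_u(v,\infty)}\lesssim v^{1/q-1/2}$, which is impossible when $1/p+1/q=1$. In the paper the roles are assigned the other way around: the factor $|s-k_0|^{-1/2}$ is placed in $L^p$ with $2<p<\infty$ (finiteness at $u\to\infty$ forces the exponent above $2$ there, not near $s=k_0$), and the Cauchy kernel goes in $L^q$ with $q<2$, which needs only $q>1$. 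This is fixable, but as written the constraint analysis you describe does not parse.
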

    \begin{proof} We only give the proof of $k\in\Omega_{1}$. For any $f\in L^{\infty}$,
    \begin{equation}\nonumber
    \begin{aligned}
|S(f)|& \leqslant\frac{1}{\pi}\iint_{\Omega_{11}}\frac{|f(s)M_{RHP}^{(2)}(k)\bar{\partial}R^{(2)}M_{RHP}^{(2)}(k)^{-1}|}{|s-k|}\mathrm{d}A(s) \\
&\leqslant\|f\|_{L^{\infty}}\frac{1}{\pi}\iint_{\Omega_{11}}\frac{|\bar{\partial}Re^{-2it\theta}|}{|s-k|}\mathrm{d}A(s)\lesssim I_{1}+I_{2}+I_{3}+I_{4},
\end{aligned}
\end{equation}
    where
    \begin{equation}\nonumber
    I_1=\iint\limits_{\Omega_{1}}\frac{|\bar{\partial}\chi_{\mathcal{Z}}e^{-2it\theta}|}{|s-k|}\mathrm{d}A(s),
    I_2=\iint\limits_{\Omega_{1}}\frac{|\gamma'(\text{Re}s)e^{-2it\theta}|}{|s-k|}\mathrm{d}A(s),\nonumber
\end{equation}
    \begin{equation}\nonumber
    I_3=\iint\limits_{\Omega_{1}}\frac{|s-k_0|^{-\frac{1}{2}}|e^{-2it\theta}|}{|s-k|}\mathrm{d}A(s),
    I_4=\iint\limits_{\Omega_{1}}\frac{|s-k_0|^{-1}|e^{-2it\theta}|}{|s-k|}\mathrm{d}A(s).\nonumber
\end{equation}
    Denote $s=k_{1}+u+i v$, $k=\alpha+i\eta $, and $Re(2it\theta)=-8tuv$ we have
    \begin{equation}\nonumber
    I_1\leqslant\intop_0^\infty\intop_v^\infty\frac{|\bar{\partial}\chi_{\mathcal{Z}}|}{|s-k|}e^{2\xi tv}
    \mathrm{d}u\mathrm{d}v\leqslant\intop_0^\infty e^{-8tv^2}\|\bar{\partial}\chi_{\mathcal{Z}}\|_{L^2}\|(s-k)^{-1}\|_{L^2}\mathrm{d}v,
\end{equation}
    where
    \begin{equation}\nonumber
    \begin{aligned}
\|(s-k)^{-1}\|_{L^{2}(v,\infty)}& \leqslant\left(\int_{-\infty}^\infty\frac1{(k_1+u-\alpha)^2+(v-\eta)^2}\mathrm{d}u\right)^{1/2} \\
&=\left(\frac1{|v-\eta|}\intop_{-\infty}^\infty\frac1{1+y^2}\mathrm{d}y\right)^{1/2}=\left(\frac\pi{|v-\eta|}\right)^{1/2},
\end{aligned}
\end{equation}
    with $y=\frac{k_1+u-\alpha}{v-\eta}$. Thus
    \begin{equation}\nonumber
    I_1\lesssim\intop_0^\infty\frac{e^{-8tv^2}}{\sqrt{|v-\eta|}}\mathrm{d}v\lesssim t^{-1/4}.\nonumber
\end{equation}
    The estimate of $I_2$ is just the same as $I_1$ because $\gamma^{\prime}(k)\in L^2(\mathbb{R})$, so we get $I_2\lesssim t^{-1/4}$.

    Finally, we deal with $I_3$. We first give the proof of the estimates as follows: for $2<p<\infty $ and $q$ satisfying $\frac1p+\frac1q=1$ we have
    \begin{equation}\nonumber
    \begin{aligned}
&\left\|\left|s-k\right|^{-1}\right\|_{L^q(\nu,+\infty)}=\left(\int_{\Omega_1}\left|s-k\right|^{-q}\mathrm{d}u\right)^{\frac1q} \\
&=\left(\int_{\Omega_1}\left((k_1+u-x)^2+(\nu-y)^2\right)^{-\frac q2}(\nu-y)d\frac{k_1+u-x}{\nu-y}\right)^{\frac1q} \\
&=\left(\int_{\Omega_1}\left(\left(\frac{k_1+u-x}{\nu-y}\right)^2+1\right)^{-\frac q2}\left(\nu-y\right)^{1-q}d\frac{k_1+u-x}{\nu-y}\right)^{\frac1q} \\
&=\left\{\int_{k_0}^{+\infty}\left[\left(\frac{k_1+u-x}{\nu-y}\right)^2+1\right]^{-q/2}d\left(\frac{k_1+u-x}{\nu-y}\right)\right\}^{1/q}|\nu-y|^{1/q-1} \\
&\lesssim|\nu-y|^{1/q-1},
\end{aligned}
\end{equation}
    and
    \begin{equation}\label{89}
    \begin{aligned}
\left\|\frac1{\sqrt{|s-k_0|}}\right\|_{L^p}
&=\left(\int_v^\infty\frac1{|u+iv|^{p/2}}du\right)^{1/p}=\left(\int_v^\infty\frac1{(u^2+v^2)^{p/4}}du\right)^{1/p} \\
&=v^{1/p\boldsymbol{-}1/2}\left(\int_1^\infty\frac1{(1+x^2)^{p/4}}dx\right)^{1/p}\leqslant cv^{1/p\boldsymbol{-}1/2}.
\end{aligned}
\end{equation}
    Therefore, by Cauchy-Schwarz inequality,
    \begin{equation}\nonumber
    \begin{aligned}
I_{3}& \leqslant c\int_{0}^{\infty}e^{-8tv^{2}}dv\int_{v}^{\infty}\frac{1}{|k-k_{0}|^{1/2}|s-k|}du \\
&\leqslant c\int_0^\infty e^{-8tv^2}\bigg|\bigg|\frac{1}{\sqrt{|s-k_0|}}\bigg|\bigg|_{L^p}\bigg|\bigg|\frac{1}{s-k_0}\bigg|\bigg|_{L^q}dv \\
&\begin{aligned}\leqslant c_3\int_0^\infty e^{-8tv^2}v^{1/p-1/2}|v-\beta|^{1/q-1}dv\lesssim t^{-1/4}.\end{aligned}
\end{aligned}
\end{equation}
    For $I_{4}$
    \begin{equation}\label{3157}
    I_{4}=\int_{a}^{1}\int_{v}^{1}\frac{1}{|s-k|}\frac{1}{\sqrt{u^{2}+v^{2}}}e^{-8tuv}dudv,
\end{equation}
    we find that $\frac{1}{\sqrt{u^{2}+v^{2}}}$ is square-integrable on $\left[a,1\right]$. We can then argue as $I_{1}$ to conclude that $I_4\lesssim{t}^{-1/4}$.
    \end{proof}

    Therefore, the solution of $\bar{\partial}$-problem is unique and obeys the formula \eqref{M3}. Then we have the following asymptotic estimation of $M^{(3)}(k)$.

    \begin{proposition}  As $k\to\infty $, The solution $M^{(3)}(k)$ of $\bar{\partial}-problem$ admits Laurent expansion:
    \begin{equation}\nonumber
    M^{(3)}(k)=I+\frac1kM_1^{(3)}+\mathcal{O}(k^{-2}),
\end{equation}
    where $M_1^{(3)}$ is a $k$-independent coefficient with
    \begin{equation}\nonumber
    M_1^{(3)}=\frac{1}{\pi}\iint_CM^{(3)}(s)W^{(3)}(s)dA(s).
\end{equation}
    $M_1^{(3)}$ satisfies
    \begin{equation}\label{813}
    |M_1^{(3)}|\lesssim t^{-3/4}.
\end{equation}
\end{proposition}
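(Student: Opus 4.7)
The plan is to estimate $M_1^{(3)}$ directly from its integral representation. Since $(I-S)^{-1}$ is bounded on $L^\infty$ by the previous proposition, we have $\|M^{(3)}\|_{L^\infty}\lesssim 1$, and by construction $M_{RHP}^{(2)}$ together with $(M_{RHP}^{(2)})^{-1}$ are uniformly bounded on each $\Omega_j$. Consequently the integrand is controlled pointwise by $|\bar\partial R^{(2)}(s)|\,e^{-2t|\operatorname{Im}\theta(s)|}$. Since $\bar\partial R^{(2)}\equiv 0$ outside $\Omega_1\cup\Omega_3\cup\Omega_4\cup\Omega_6$, and since the four contributions are symmetric, it suffices to bound the integral over $\Omega_1$.

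In $\Omega_1$ I parametrize $s=k_0+u+iv$ with $0\le v\le u$, so that $\operatorname{Re}(2it\theta(s))=-8tuv$ (after dropping the purely imaginary $-4itk_0^2$ which has no effect on the modulus). Inserting the bound
\begin{equation}\nonumber
|\bar\partial R_1(s)|\;\lesssim\;|\bar\partial\chi_{\mathcal Z}(s)|+|\gamma'(\operatorname{Re}s)|+|s-k_0|^{-1/2},
\end{equation}
derived in Section \ref{s:3.2}, splits the $\Omega_1$-integral into three pieces $J_1,J_2,J_3$. The first piece $J_1$ is supported where $\bar\partial\chi_{\mathcal Z}\neq 0$, i.e.\ in annuli around the discrete-spectrum points, which are bounded away from $k_0$; there $e^{-8tuv}$ provides $J_1=\mathcal O(e^{-ct})$ for some $c>0$, far better than $t^{-3/4}$.

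For $J_2$, Cauchy--Schwarz in the $u$-variable (at fixed $v$) and $\gamma'\in L^2(\mathbb R)$ give
\begin{equation}\nonumber
\int_v^\infty |\gamma'(k_0+u)|\,e^{-8tuv}\,du\;\le\;\|\gamma'\|_{L^2}\,\Bigl(\int_v^\infty e^{-16tuv}\,du\Bigr)^{1/2}\;\lesssim\;\frac{e^{-8tv^2}}{\sqrt{tv}}.
\end{equation}
Integrating in $v$ with the substitution $w=v\sqrt t$ yields
\begin{equation}\nonumber
\int_0^\infty\frac{e^{-8tv^2}}{\sqrt{tv}}\,dv\;=\;t^{-3/4}\int_0^\infty w^{-1/2}e^{-8w^2}\,dw\;\lesssim\;t^{-3/4},
\end{equation}
so $J_2\lesssim t^{-3/4}$. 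For $J_3$ I pass to polar coordinates $s-k_0=\rho e^{i\phi}$ with $\phi\in[0,\pi/4]$, which transforms the phase into $-4t\rho^2\sin(2\phi)$, so that
\begin{equation}\nonumber
J_3\;\lesssim\;\int_0^{\pi/4}\!\!\int_0^\infty \rho^{1/2}\,e^{-4t\rho^2\sin(2\phi)}\,d\rho\,d\phi\;=\;C\,t^{-3/4}\int_0^{\pi/4}(\sin 2\phi)^{-3/4}\,d\phi,
\end{equation}
and the angular integral converges because $-3/4>-1$. Summing the four sectoral contributions gives $|M_1^{(3)}|\lesssim t^{-3/4}$.

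The main obstacle is the singularity $|s-k_0|^{-1/2}$ at the stationary point: the Gaussian decay in the radial direction alone only produces $t^{-1/2}$, while a crude angular bound would force the exponent to collapse back to that value. The saving comes from the fact that the radial Laplace-type integral $\int_0^\infty \rho^{1/2}e^{-\alpha\rho^2}d\rho\sim \alpha^{-3/4}$ scales as $\alpha^{-3/4}$ with $\alpha=4t\sin 2\phi$, and the angular weight $(\sin 2\phi)^{-3/4}$ remains integrable on $[0,\pi/4]$; without this fine balance the $t^{-3/4}$ rate would be lost. Once $J_1,J_2,J_3$ are all bounded by $t^{-3/4}$, the estimate \eqref{813} follows.
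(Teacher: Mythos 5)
Your estimates for the three pieces you do treat are correct, and two of them are nice variants of the paper's argument: your $J_1$ (exponential smallness of the $\bar{\partial}\chi_{\mathcal{Z}}$ contribution, since the cutoff is supported near the discrete spectrum, which is bounded away from the saddle and from $\mathbb{R}$) is cleaner and stronger than the paper's Cauchy--Schwarz bound for its $I_5$; your $J_2$ coincides with the paper's $I_6$; and your polar-coordinate treatment of $J_3$ replaces the paper's H\"older-inequality argument for $I_7$ with an equivalent and arguably more transparent computation of the radial Laplace integral against the angular weight $(\sin 2\phi)^{-3/4}$.

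However, there is a genuine gap. The bound actually established in Section \ref{s:3.2}, namely \eqref{R1}, contains \emph{four} terms:
\begin{equation}\nonumber
|\bar{\partial}R_j(k)|\lesssim|\bar{\partial}\chi_{\mathcal{Z}}(k)|+|\gamma'(\operatorname{Re}k)|+|k-k_0|^{-1/2}+|k-k_0|^{-1},
\end{equation}
whereas you quote and use only the first three. The omitted term $|s-k_0|^{-1}$ is the most singular one and is not covered by your $J_3$ argument: repeating your polar-coordinate computation with $\rho^{-1}$ in place of $\rho^{-1/2}$ gives
\begin{equation}\nonumber
\iint_{\Omega_1}|s-k_0|^{-1}e^{-4t\rho^2\sin 2\phi}\rho\,d\rho\,d\phi=\int_0^{\pi/4}\!\!\int_0^\infty e^{-4t\rho^2\sin 2\phi}\,d\rho\,d\phi\;\lesssim\;t^{-1/2}\int_0^{\pi/4}(\sin 2\phi)^{-1/2}\,d\phi\;\lesssim\;t^{-1/2},
\end{equation}
which is strictly worse than the claimed $t^{-3/4}$ and cannot simply be absorbed. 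This is exactly the contribution the paper identifies (Section \ref{s:3.6}) as arising from the discrepancy between the matrix functions $\delta_j$ and the scalar $\det\delta$, and it is the reason the paper splits the analysis into the regions $|k-k_0|\geq a$ and $|k-k_0|<a$, invoking the classical Deift--Zhou argument in the latter. The paper's own proof of this proposition lists the corresponding integral as $I_8$; your proof needs either to estimate this fourth term (and explain why the $t^{-3/4}$ rate survives, e.g.\ by restricting to $|s-k_0|\geq a$ where $|s-k_0|^{-1}\leq a^{-1/2}|s-k_0|^{-1/2}$ reduces it to your $J_3$), or to state explicitly that the estimate is only claimed away from the stationary point. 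As written, the proposal silently drops the one term that threatens the result.
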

    \begin{proof}   $M_{RHP}^{(2)}$ is bounded beyond the poles on $\Omega_1'=\Omega_1\cap\operatorname{supp}(1-\chi_{\mathcal{Z}})$, therefore
    \begin{equation}\nonumber
    \begin{aligned}
|M_{1}^{(3)}|& \leqslant\frac{1}{\pi}\iint\limits_{\Omega_{1}}|M^{(3)}(s)M_{RHP}^{(2)}(s)\overline{\partial}R^{(2)}M_{RHP}^{(2)}(s)^{-1}|dA(s) \\
&\leqslant\frac{1}{\pi}||M^{(3)}||_{L^{\infty}(\Omega)}||M_{RHP}^{(2)}||_{L^{\infty}(\Omega^{\prime})}||(M_{RHP}^{(2)})^{-1}||_{L^{\infty}(\Omega^{\prime})}
\iint\limits_{\Omega}|\overline{\partial}Re^{2it\theta}|dA(s) \\
&\leqslant C\left(\underset{\Omega_1}{\operatorname*{\iint}}|\bar{\partial}\chi_{\mathcal{Z}}(s)|e^{-8tvu}dA\right.+\iint\limits_{\Omega_1}|\gamma'(u)|e^{-8tvu}dA(s)\\
&+\iint\limits_{\Omega_1}\frac{1}{|s-k_0|^{1/2}}e^{-8tvu}dA+\iint\limits_{\Omega_1}\frac{1}{|s-k_0|}e^{-8tvu}dA(s) \\
&\leqslant C(I_5+I_6+I_7+I_8).
\end{aligned}
\end{equation}
    Use the Cauchy-Schwarz inequality
    \begin{equation}\nonumber
    \begin{aligned}
|I_{5}|& \leqslant\int_{0}^{\infty}||\bar{\partial}\chi_{\mathcal{Z}}||_{L_{u}^{2}(v,\infty)}\left(\int_{v}^{\infty}e^{-8tuv}du\right)^{1/2}dv \\
&\leqslant ct^{-1/2}\int_0^\infty\frac{e^{-4tv^2}}{\sqrt{v}}\leqslant t^{-1/4}\int_0^\infty\frac{e^{-4w^2}}{\sqrt{w}}dw\leqslant c_5t^{-3/4}.
\end{aligned}
\end{equation}
    In a similar way to $I_{5}$, it can be shown that $I_6\leqslant c_5t^{-3/4}$, and $I_8\leqslant c_5t^{-3/4}$.

    Similar to the previous $I_{3}$ proof, for $2 < p < 4$, using the Holder inequality and \eqref{89},
    \begin{equation}\nonumber
    \int_{v}^\infty e^{-8tuv}|s-k_0|^{-1/2}du\leqslant cv^{1/p-1/2}\left(\int_v^\infty e^{-8qtuv}du\right)^{1/q},
\end{equation}
    where $1/p+1/q=1$, $2<p<4$, thus
    \begin{equation}
    \begin{aligned}\nonumber
    I_{7}&\leqslant\int_0^\infty v^{1/p-1/2}\left(\int_v^\infty e^{-4tqtuv}du\right)^{1/q}dv=\int_0^\infty v^{1/p-1/2}(qtv)^{-1/q}e^{-4tv^2}dv \\\\
   & \leqslant ct^{-1/q}\int_0^\infty v^{2/p-3/2}e^{-4tv^2}dv\leqslant ct^{-3/4}\int_0^\infty w^{2/p-3/2}e^{-4w^2}dw\leqslant ct^{-3/4},
    \end{aligned}
\end{equation}
\end{proof}

    \subsection{\it{Long-time asymptotics for the spin-1 GP equation for Region-1: $|k-k_{0}|\geq a$}}\label{s:3.5}
In this subsection, we construct the long-time asymptotics of equation \eqref{GP}. Inverting the sequence of transformations \eqref{M1}, \eqref{M2}, \eqref{MM3}
    and (6.4), we have
    \begin{equation}\nonumber
    M=M^{(1)}\Delta^{-1}(k)=M^{(2)}R^{(2)^{-1}}\Delta^{-1}(k)=M^{(3)}M_{RHP}^{(2)}R^{(2)^{-1}}\Delta^{-1}(k)
\end{equation}
    In particular, if we consider $k\to\infty $ in the vertical direction $k\in\Omega_{2},\Omega_{5}$, then we have $R^{(2)}=I,$, thus
    \begin{equation}\nonumber
    M=\left(I+\frac{M_1^{(3)}}k+\cdots\right)\left(I+\frac{M_1^{(out)}}k+\cdots\right)\left(I+\frac{{\Delta_1}^{-1}(k)}k+\cdots\right),
\end{equation}
    we can get
    \begin{equation}\nonumber
    M_1=M_1^{(out)}+M_1^{(3)}+{\Delta_1}^{-1}(k).
\end{equation}
    Then, it can be obtained by the reconstruction formula \eqref{q1} and the estimation \eqref{813}
    \begin{equation}\nonumber
    q(x,t)=2i(M_1^{(out)})_{UR}+\mathcal{O}(t^{-3/4}).
\end{equation}
    Since
    \begin{equation}\nonumber
    2i(M_1^{(out)})_{12}=q_{sol}(x,t;\sigma_d^{out}).
\end{equation}
    So
    \begin{equation}\label{360}
    q(x,t)=q_{sol}(x,t;\sigma_d^{out})+\mathcal{O}(t^{-3/4}).
\end{equation}

    \subsection{\it{Long-time asymptotics for the spin-1 GP equation for Region-2: $|k-k_{0}|<a$}}\label{s:3.6} 	
    For $|k|\in[0,a)$, $a<\rho/2$, $\int_{v}^{1}\frac{1}{u^{2}+v^{2}}du=\frac{1}{v}\arctan(\frac{1}{v})-\frac{1}{v}\arctan(1)$, which is not square-integrable
    on $\left[0,a\right]$, so $I_4$ defined in \eqref{3157} cannot be appropriately scaled in the $\bar{\partial}$ steepest descent method. We need to
    reconsider $I_4$ and we find that $I_4$ is caused by the error between $\delta_{j}$ and $\det\delta$ in the $\bar{\partial}$ steepest descent method.
    Here, we use the $\text{Deift-Zhou's}$ nonlinear steepest descent method to handle it. According to Corollary 3.2 in the \cite{boo-1}, as $t\to\infty,$
    the solution $q(x, t)$ for the Cauchy problem of the spin-1 GP equation \eqref{GP} is
    \begin{equation}\nonumber
    q(x,t)=\frac i{\sqrt{2t}}\left(M_1^0\right)_{UR}+\mathcal{O}\left(\frac{\log t}t\right),
\end{equation}
    where $M^0(k;x,t)$ is the solution of the following RH problem

    \begin{RHP}\label{RHP11}
    		Find a matrix valued function $M^0(k;x,t)$ admits:
    		\begin{enumerate}[(i)]
    			\item Analyticity:~$M^0(k;x,t)$ is analytic in $k\in\mathbb{C}\setminus(\Sigma_0)$, ;
    			\item Jump condition:
    			\begin{equation}M_+^0(k;x,t)=M_-^0(k;x,t)J^0(k;x,t),\quad k\in\Sigma_0,\end{equation}
    			\item Asymptotic behavior:
    \begin{align}
    				M^0(k;x,t)\to I_{4\times4},&k\to\infty,
    			\end{align}
    		\end{enumerate}
    	\end{RHP}

    $J^0=(b_-^0)^{-1}b_+^0=(I_{4\times4}-\omega_-^0)^{-1}(I_{4\times4}+\omega_+^0)$ and
    $\Sigma_0=\{k=ue^{\frac{\pi i}4}:u\in\mathbb{R}\}\cup\{k=ue^{-\frac{\pi i}4}:u\in\mathbb{R}\}$
    \begin{equation}\nonumber
    \begin{aligned}\omega^0=\omega_+^0=\begin{cases}\begin{pmatrix}0&-(\delta^0)^2k^{2i\nu}e^{-\frac12ik^2}\gamma^\dagger(k_0)\\
    0&0\end{pmatrix},&k\in\Sigma_0^1,\\\begin{pmatrix}0&(\delta^0)^2k^{2i\nu}e^{-\frac12ik^2}(I_{2\times2}+\gamma^\dagger(k_0)\gamma(k_0))^{-1}\gamma^\dagger(k_0)\\
    0&0\end{pmatrix},&k\in\Sigma_0^3,\end{cases}\end{aligned}
\end{equation}
    \begin{equation}\nonumber
    \omega^0=\omega_-^0=\begin{cases}\begin{pmatrix}0&0\\-(\delta^0)^{-2}k^{-2i\nu}e^{\frac12ik^2}\gamma(k_0)&0\end{pmatrix},&k\in\Sigma_0^2,\\
    \begin{pmatrix}0&0\\(\delta^0)^{-2}k^{-2i\nu}e^{\frac12ik^2}\gamma(k_0)(I_{2\times2}+\gamma^\dagger(k_0)\gamma(k_0))^{-1}&0\end{pmatrix},&k\in\Sigma_0^4.\end{cases}
\end{equation}
    This RH problem \ref{RHP11} can be transformed into a model problem, from which the explicit expression of $M_1^0$ can be obtained through the standard
    parabolic cylinder function. So according to the results of Theorem 1.1. of \cite{boo-1}, we have
    \begin{equation}\label{45}
    q(x,t)=t^{-1/2}\frac{\sqrt{\pi}(\delta^0)^2e^{-\frac{\pi\nu}{2}}e^{\frac{-3\pi i}{4}}}{\Gamma(-i\nu)\det-\gamma(k_0)}\begin{pmatrix}-\gamma_{22}(k_{0})&\gamma_{12}(k_{0})
    \\\gamma_{21}(k_{0})&-\gamma_{11}(k_{0})\end{pmatrix}+\mathcal{O}(\frac{\log t}{t})=t^{-1/2}g+\mathcal{O}(\frac{\log t}{t}),
\end{equation}
     $\Gamma(\cdot)$ is the Gamma function and the $\gamma_{ij}(k)$ is the $(i,j)$ entry of the matrix-valued function $\gamma(k)$ defined in \eqref{221}, and
    \begin{equation}\nonumber
    \begin{aligned}
\delta^{0}& =e^{2itk_{0}^{2}}(8t)^{-\frac{i\nu}{2}}e^{\chi(k_{0})},\quad k_{0}=-\frac{x}{4t}, \\
&\nu=-\frac1{2\pi}\log(1+\left|\gamma\left(k_0\right)\right|^2+\left|\det\gamma\left(k_0\right)\right|^2), \\
\chi\left(k_{0}\right)& =\frac1{2\pi i}\Big[\int_{k_0-1}^{k_0}\log\left(\frac{1+|\gamma(\xi)|^2+|\det\gamma(\xi)|^2}{1+|\gamma(k_0)|^2+|\det\gamma(k_0)|^2}\right)
\frac{\mathrm{d}\xi}{\xi-k_0} \\
&+\int_{-\infty}^{k_0-1}\log\Big(1+|\gamma(\xi)|^2+|\det\gamma(\xi)|^2\Big) \frac{\mathrm{d}\xi}{\xi-k_0}\Big].
\end{aligned}
\end{equation}

\section{Asymptotic analysis in the region $\xi=0$}\label{s:4}
    In this section, we will focus on the long-time asymptotic behavior of solution to the spin-1 GP equation \eqref{GP} in $\xi\to0$, as $t\to\infty$.
    We will soon prove in this region, the solution decay like $O(t^{-3/4})$, which is same as the leading term in the oscillating region.

    As for $t\to\infty,$ we obtain
    \begin{equation}\nonumber
     k_0={-\frac{x}{4t}}\to0,\quad\mathrm{as}\quad t\to+\infty,
\end{equation}
    therefore, we get the signature table of the phase function $Re(i\theta)$ in Figure. \ref{Figure 1.}.

    \begin{RHP}
    		Find a matrix valued function $M(k)$ admits:
    		\begin{enumerate}[(i)]
    			\item Analyticity:~$M(k;x,t)$ is analytic in $k\in\mathbb{C}\setminus(\mathbb{R}\cup\mathcal{Z}\cup\bar{\mathcal{Z}})$, $\mathcal{Z}=\{k_j\}_{j=1}^N$;
    			\item Jump condition:
    			\begin{equation}\nonumber
    M_+(k;x,t)=M_-(k;x,t)J(k;x,t),\quad k\in\mathbb{R},\end{equation}
    		\begin{equation}\nonumber
    J=\begin{pmatrix}I_{2\times2}+\gamma^\dagger(\bar{k})\gamma(k)&\gamma^\dagger(\bar{k})e^{-2it\theta}\\\gamma(k)e^{2it\theta}&I_{2\times2}\end{pmatrix},\end{equation}
              where $\theta(k)=\frac xtk+2k^2,$ $\gamma(k)=b(k)a^{-1}(k)$.
                \item Residue conditions: $M(k;x,t)$ has simple poles at each point in $\mathcal{Z}\cup\bar{\mathcal{Z}}$
                 \begin{equation}\nonumber
    \operatorname*{Res}_{k=k_j}M(k)=\lim_{k\to k_j}M(k)\begin{pmatrix}0&0\\f(k_j)e^{2it\theta(k_j)}&0\end{pmatrix};
\end{equation}
                \begin{equation}\nonumber
    \operatorname*{Res}_{k{=}\bar{k}}M(k)=\lim_{k\to \bar{k_j}}M(k)\begin{pmatrix}0&-f^\dagger(k_j)e^{-2it\theta(\bar{k_j})}\\0&0\end{pmatrix};
                 \end{equation}

    			\item Asymptotic behavior:

    			\begin{align}\nonumber
    				M(k;x,t)\rightarrow{I}~~as~~k\rightarrow\infty.
    			\end{align}
    		\end{enumerate}
    	\end{RHP}

    \begin{equation}\nonumber
   M^{(1)}(k;x,t)=M(k;x,t)\Delta^{-1}(k).\end{equation}
    \begin{RHP}
    		Find a matrix valued function $M^{(1)}(k;x,t)$ admits:
    		\begin{enumerate}[(i)]
    			\item Analyticity:~$M^{(1)}(k;x,t)$ is analytic in $k\in\mathbb{C}\setminus(\mathbb{R}\cup\mathcal{Z}\cup\bar{\mathcal{Z}})$;
    			\item Jump condition:
    			\begin{equation}\nonumber
    M_+^{(1)}(k;x,t)=M_-^{(1)}(k;x,t)J^{(1)}(k;x,t),\quad k\in\mathbb{R},\end{equation}
    		\begin{equation}\nonumber
    \left.J^{(1)}(k;x,t)=
    \begin{cases}\nonumber\begin{pmatrix}I_{2\times2}&T_{1-}(k)\gamma^\dagger(\bar{k})T_{2-}(k)e^{-2it\theta}\\0&I_{2\times2}\end{pmatrix}\begin{pmatrix}I_{2\times2}&0\\
    T_{2+}^{-1}(k)\gamma(k)T_{1+}^{-1}(k)e^{2it\theta}&I_{2\times2}\end{pmatrix},\quad k\in(-\infty,0),\\\begin{pmatrix}I_{2\times2}&0\\
    T_{2-}^{-1}(k)\rho^{\dagger}(\bar{k})T_{1-}^{-1}(k)e^{2it\theta}&I_{2\times2}\end{pmatrix}\begin{pmatrix}I_{2\times2}&T_{1+}(k)\rho(k)T_{2+}(k)e^{-2it\theta}\\
    0&I_{2\times2}\end{pmatrix},\quad k\in(0,+\infty);&\end{cases}\right.\end{equation}
    where
    \begin{equation}\nonumber\rho(k)=\left(I_{2\times2}+\gamma^\dagger(\bar{k})\gamma(k)\right)^{-1}\gamma^\dagger(\bar{k}),\nonumber\end{equation}

    			\item Asymptotic behavior:
    			\begin{align}\nonumber
    				M^{(1)}(k;x,t)\rightarrow{I_{4\times4}}~~as~~k\rightarrow\infty.
    			\end{align}
                \item Residue conditions: $M^{(1)}(k;x,t)$ has simple poles at each point in $\mathcal{Z}\cup\bar{\mathcal{Z}}$

                For $j\in\Delta_{0}^-$
                \begin{equation}\nonumber
    \operatorname*{Res}_{k=k_j}M^{(1)}(k)=\lim_{k\to k_j}M^{(1)}(k)\begin{pmatrix}0&[({T_1}^{-1})'(k_j)]^{-1}f^{-1}(k_j)[({T_2}^{-1})'(k_j)]^{-1}e^{-2it\theta(k_j)}\\
    0&0\end{pmatrix};
                 \end{equation}
               \begin{equation}\nonumber
    \operatorname*{Res}_{k{=}\bar{k}}M^{(1)}(k)=\lim_{k\to \bar{k_j}}M^{(1)}(k)\begin{pmatrix}0&0\\
    -[{T_2}'(\bar{k_j})]^{-1}(f^\dagger(k_j))^{-1}[{T_1}'(\bar{k_j})]^{-1}e^{2it\theta(\bar{k_j})}&0\end{pmatrix};
\end{equation}

                For $j\in\Delta_{0}^+$
                \begin{equation}\nonumber
    \operatorname*{Res}_{k=k_j}M^{(1)}(k)=\lim_{k\to k_j}M^{(1)}(k)\begin{pmatrix}0&0\\{T_2}^{-1}(k_j)f(k_j){T_1}^{-1}(k_j)e^{2it\theta(k_j)}&0\end{pmatrix};
\end{equation}
                \begin{equation}\nonumber
    \operatorname*{Res}_{k{=}\bar{k}}M^{(1)}(k)=\lim_{k\to \bar{k_j}}M^{(1)}(k)\begin{pmatrix}0&-T_1(\bar{k_j})f^\dagger(k_j)T_2(\bar{k_j})e^{-2it\theta(\bar{k_j})}
    \\0&0\end{pmatrix}.
                 \end{equation}

    		\end{enumerate}
    	\end{RHP}

    \begin{equation}\nonumber
     M^{(2)}(k)=M^{(1)}(k)R^{(2)}(k).\end{equation}
     \begin{RHP}
    		Find a matrix valued function $$M^{(2)}(k)=M^{(2)}(k;x,t)$$ admits:
    		\begin{enumerate}[(i)]
    			\item $M^{(2)}(k;x,t)$ is continuous in $k\in\mathbb{C}\setminus(\mathbb{R}\cup\mathcal{Z}\cup\bar{\mathcal{Z}})$;
    			\item Jump condition:
    			\begin{equation}\nonumber
    M_+^{(2)}(k)=M_-^{(2)}(k)V^{(2)}(k),\quad k\in\Sigma^{(2)},\end{equation}
              where the jump matrix
              \begin{equation}\nonumber
             V^{(2)}(k)=(R_-^{(2)})^{-1}J^{(1)}R_+^{(2)}=I+(1-\chi_Z(k))\delta V^{(2)},
            \end{equation}
    		\begin{equation}\nonumber
    \delta V^{(2)}(k)=\begin{cases}\begin{pmatrix}0&0\\T_0(0)^{-2}\gamma(0)(k)^{-2i\nu(0)}e^{2it\theta}&0\end{pmatrix},\quad k\in\Sigma_1,\\\\
    \begin{pmatrix}0&T_0(0)^{2}\rho(0)(k)^{2i\nu(0)}e^{-2it\theta}\\0&0\end{pmatrix},\quad k\in\Sigma_2,\\\\
    \begin{pmatrix}0&0\\T_0(0)^{-2}\rho^{\dagger}(0)(k)^{-2i\nu(0)}e^{2it\theta}&0\end{pmatrix},\quad k\in\Sigma_3,\\\\
    \begin{pmatrix}0&T_0(0)^{2}\gamma^\dagger(0)(k)^{2i\nu(0)}e^{-2it\theta}\\0&0\end{pmatrix},\quad k\in\Sigma_4,&\end{cases}
\end{equation}
    where
    \begin{equation}\rho(k)=\left(I_{2\times2}+\gamma^\dagger(\bar{k})\gamma(k)\right)^{-1}\gamma^\dagger(\bar{k}),\nonumber\end{equation}

    			\item Asymptotic behavior:
    			\begin{align}\nonumber
    				M^{(2)}(k;x,t)\rightarrow{I_{4\times4}}~~as~~k\rightarrow\infty,
    			\end{align}
                \item $\bar{\partial}-Derivative$: for $\mathbb{C}\setminus(\Sigma^{(2)}\cup\mathcal{Z}\cup\bar{\mathcal{Z}})$ we have
                \begin{equation}\nonumber
                \bar{\partial}M^{(2)}(k)=M^{(2)}(k)\bar{\partial}R^{(2)}(k),
                \end{equation}
                where
                \begin{equation}\nonumber\bar{\partial}R^{(2)}(k)
                =\left\{\begin{array}{cc}\left(\begin{array}{cc}0&0\\-\bar{\partial}R_1e^{2it\theta}&0\end{array}\right),&k\in\Omega_1,\\
                                                        \left(\begin{array}{cc}0&-\bar{\partial}R_{3}e^{-2it\theta}\\0&0\end{array}\right),&k\in\Omega_{3},\\
                                                        \left(\begin{array}{cc}0&0\\\bar{\partial}R_{4}e^{2it\theta}&0\end{array}\right),&k\in\Omega_{4},\\
                                                        \left(\begin{array}{cc}0&\bar{\partial}R_6e^{-2it\theta}\\0&0\end{array}\right),&k\in\Omega_6,\\
                                                        \left(\begin{array}{cc}0&0\\0&0\end{array}\right),&k\in\Omega_2\cup\Omega_5.
                                                        \end{array}\right.\end{equation}

                \item Residue conditions: $M^{(2)}(k;x,t)$ has simple poles at each point in $\mathcal{Z}\cup\bar{\mathcal{Z}}$

                For $j\in\Delta_{0}^-$
                \begin{equation}\nonumber
    \operatorname*{Res}_{k=k_j}M^{(2)}(k)=\lim_{k\to k_j}M^{(2)}(k)
    \begin{pmatrix}0&[({T_1}^{-1})'(k_j)]^{-1}f^{-1}(k_j)[({T_2}^{-1})'(k_j)]^{-1}e^{-2it\theta(k_j)}\\0&0\end{pmatrix};
                 \end{equation}
               \begin{equation}\nonumber
    \operatorname*{Res}_{k{=}\bar{k}}M^{(2)}(k)=\lim_{k\to \bar{k_j}}M^{(2)}(k)
    \begin{pmatrix}0&0\\-[{T_2}'(\bar{k_j})]^{-1}(f^\dagger(k_j))^{-1}[{T_1}'(\bar{k_j})]^{-1}e^{2it\theta(\bar{k_j})}&0\end{pmatrix};
\end{equation}

                For $j\in\Delta_{0}^+$
                \begin{equation}\nonumber
    \operatorname*{Res}_{k=k_j}M^{(2)}(k)=\lim_{k\to k_j}M^{(2)}(k)
    \begin{pmatrix}0&0\\{T_2}^{-1}(k_j)f(k_j){T_1}^{-1}(k_j)e^{2it\theta(k_j)}&0\end{pmatrix};
\end{equation}
                \begin{equation}\nonumber
    \operatorname*{Res}_{k{=}\bar{k}}M^{(2)}(k)=\lim_{k\to \bar{k_j}}M^{(2)}(k)
    \begin{pmatrix}0&-T_1(\bar{k_j})f^\dagger(k_j)T_2(\bar{k_j})e^{-2it\theta(\bar{k_j})}\\0&0\end{pmatrix};
                 \end{equation}

    		\end{enumerate}
    	\end{RHP}
     \begin{equation}\nonumber
    M^{(2)}(k;x,t)=\begin{cases}\overline{\partial}R^{(2)}=0\to M_{RHP}^{(2)},\\
    \overline{\partial}R^{(2)}\neq0\to M^{(3)}=M^{(2)}M_{RHP}^{(2)-1},\end{cases}
    \end{equation}
    \begin{RHP}\nonumber
    		Find a matrix valued function $M_{RHP}^{(2)}$ admits:
    		\begin{enumerate}[(i)]
    			\item ~$M_{RHP}^{(2)}(k;x,t)$ is continuous in $k\in\mathbb{C}\setminus(\mathbb{R}\cup\mathcal{Z}\cup\bar{\mathcal{Z}})$;
    			\item Jump condition:
    			\begin{equation}\nonumber
    M_{+RHP}^{(2)}(k)=M_{-RHP}^{(2)}(k)V^{(2)}(k),\quad k\in\Sigma^{(2)},\end{equation}

    			\item Asymptotic behavior:
    			\begin{align}\nonumber
    				M_{RHP}^{(2)}(k;x,t)\rightarrow{I_{4\times4}}~~as~~k\rightarrow\infty,
    			\end{align}
                \item $\overline{\partial}R^{(2)}=0$

                \item Residue conditions: With the same residue conditions as $M^{(2)}$,

    		\end{enumerate}
    	\end{RHP}

     \begin{RHP}\nonumber
    		Find a matrix valued function $M^{(3)}(k)=M^{(3)}(k;x,t)$ admits:
    		\begin{enumerate}[(i)]
    			\item ~$M^{(3)}(k;x,t))$ is continuous in $k\in\mathbb{C}\setminus(\mathbb{R}\cup\mathcal{Z}\cup\bar{\mathcal{Z}})$;
    			\item $\bar{\partial}M^{(3)}(k)=M^{(3)}(k)W^{(3)}(k), k\in\mathbb{C},$

    			\item $M^{(3)}(k)\sim I,\quad k\to\infty,$

    		\end{enumerate}
    	\end{RHP}

    similar to the previous analysis, it can be concluded that the phase function of the spin-1 GP equation only has one steady-state phase point,
    so as $t\to+\infty$, there will be no collision of steady-state phase points. After our research,
    \begin{equation}\nonumber
    M=M^{(1)}\Delta^{-1}(k)=M^{(2)}R^{(2)^{-1}}\Delta^{-1}(k)=M^{(3)}M_{RHP}^{(2)}R^{(2)^{-1}}\Delta^{-1}(k)
\end{equation}
    we found that this situation is included in the first scenario we considered, the same leading term and the errors are all $O(t^{-3/4})$.
    \begin{equation}\nonumber
    q(x,t)=q_{sol}(x,t;\sigma_d^{out})+t^{-1/2}g+\mathcal{O}(t^{-3/4}).
\end{equation}
    \begin{equation}\nonumber
    g=\frac{\sqrt{\pi}(\delta^0)^2e^{-\frac{\pi\nu}{2}}e^{\frac{-3\pi i}{4}}}{\Gamma(-i\nu)\det(-\gamma(0))}
    \begin{pmatrix}-\gamma_{22}(0)&\gamma_{12}(0)\\\gamma_{21}(0)&-\gamma_{11}(0)\end{pmatrix}
\end{equation}

\section{\it{Long-time asymptotic behaviors for the spin-1 GP equation}}\label{s.5}

The main purpose of this section is to give the long-time asymptotic behaviors of the spin-1 GP equation .
Inverting the sequence of transformations \eqref{M1}, \eqref{M2} and \eqref{MM3}, we have
    \begin{equation}\nonumber
    M=M^{(1)}\Delta^{-1}(k)=M^{(2)}R^{(2)^{-1}}\Delta^{-1}(k)=M^{(3)}M_{RHP}^{(2)}R^{(2)^{-1}}\Delta^{-1}(k)
\end{equation}
    In particular, if we consider $k\to\infty $ in the vertical direction $k\in\Omega_{2},\Omega_{5}$, then we have $R^{(2)}=I,$, thus
    \begin{equation}\nonumber
    M=\left(I+\frac{M_1^{(3)}}k+\cdots\right)\left(I+\frac{M_1^{(out)}}k+\cdots\right)\left(I+\frac{{\Delta_1}^{-1}(k)}k+\cdots\right),
\end{equation}
    we can get
    \begin{equation}\nonumber
    M_1=M_1^{(out)}+M_1^{(3)}+{\Delta_1}^{-1}(k).
\end{equation}
    Then, it can be obtained by the reconstruction formula \eqref{q1} and the estimation \eqref{813}
    \begin{equation}\nonumber
    q(x,t)=2i(M_1^{(out)})_{UR}+\mathcal{O}(t^{-3/4}).
\end{equation}
    Since
    \begin{equation}\nonumber
    2i(M_1^{(out)})_{12}=q_{sol}(x,t;\sigma_d^{out}).
\end{equation}
    So
    \begin{equation}\label{360}
    q(x,t)=q_{sol}(x,t;\sigma_d^{out})+\mathcal{O}(t^{-3/4}).
\end{equation}

    Combining \eqref{360} and \eqref{45}, we can get the long-time asymptotic of the spin-1 GP equation under the cases of coexistence of discrete and continuous spectrum.

The principal results of the this work are now stated as follows.
     \begin{theorem}\label{theorem1}
     Let $q_0(x,t), q_1(x,t) , q_{-1}(x,t)$ be the solution of \eqref{GP} corresponding to initial data $q_0^0(x), q_1^0(x) , q_{-1}^0(x) \in \mathscr{S}(\mathbb{R})$ which
      satisfies Assumption \ref{assumption25}. Let $\left\{\gamma(k),\{(k_j,f(k_{j})\}_{j=1}^N\right\}$ denote the scattering data generated from
      $q_0^0(x)$, $q_1^0(x)$, $q_{-1}^0(x)$. Fix $x_1,x_2,v_1,v_2\in\mathbb{R}$ with $x_{1}\leq x_{2}$ and $v_2>v_1>0$. Let $I=[-v_2/4,-v_1/4]$  and $\xi=x/t$. The soliton
      solution of \eqref{GP} denote by $q_{{sol}}(x,t;\sigma_d^\pm(\mathcal{I}))$ with modulating reflectionless scattering data $\sigma_d^\pm(\mathcal{I})$ define
      in \eqref{12}. Then, as $|t|\to\infty$ in the cone $S(x_1,x_2,v_1,v_2)$ defined in \eqref{SS}, the solution of the Cauchy problem for the spin-1 GP equation \eqref{GP}
      satisfies the following asymptotic formulae:
    \begin{equation}
    q(x,t)=\begin{pmatrix}q_1(x,t)&q_0(x,t)\\q_0(x,t)&q_{-1}(x,t)\end{pmatrix}=q_{sol}(x,t;\widehat{\sigma}_d(\mathcal{I}))+t^{-1/2}g+\mathcal{O}(t^{-3/4}),
\end{equation}
    where $q_{sol}(x,t;\widehat{\sigma}_d(\mathcal{I}))$ defined in \eqref{775}, and
    \begin{equation}\nonumber
    g=\frac{\sqrt{\pi}(\delta^0)^2e^{-\frac{\pi\nu}{2}}e^{\frac{-3\pi i}{4}}}{\Gamma(-i\nu)\det(-\gamma(k_0))}\begin{pmatrix}-\gamma_{22}(k_{0})&\gamma_{12}(k_{0})\\
    \gamma_{21}(k_{0})&-\gamma_{11}(k_{0})\end{pmatrix},
\end{equation}
where $\gamma(k)$ defined in \eqref{221}, $\Gamma(\cdot)$ is the Gamma function and
\begin{equation}\nonumber
    \begin{aligned}
\delta^{0}& =e^{2itk_{0}^{2}}(8t)^{-\frac{i\nu}{2}}e^{\chi(k_{0})},\quad k_{0}=-\frac{x}{4t}, \\
&\nu=-\frac1{2\pi}\log(1+|\gamma(k_{0})|^{2}+|\det\gamma(k_{0})|^{2}), \\
\chi\left(k_{0}\right)& =\frac{1}{2\pi i}\Big[\int_{k_{0}-1}^{k_{0}}\log\left(\frac{1+|\gamma(\xi)|^{2}+|\det\gamma(\xi)|^{2}}{1+
|\gamma(k_{0})|^{2}+|\det\gamma(k_{0})|^{2}}\right)\frac{\mathrm{d}\xi}{\xi-k_{0}} \\
&+\int_{-\infty}^{k_{0}-1}\log\Big(1+|\gamma(\xi)|^{2}+|\det\gamma(\xi)|^{2}\Big) \frac{\mathrm{d}\xi}{\xi-k_{0}}\Big].
\end{aligned}
\end{equation}
\begin{equation}\label{12}
    \sigma_d^\pm(\mathcal{I})=\{(k_j,f^\pm(\mathcal{I})):k_j\in\mathcal{Z}(\mathcal{I})\},
\end{equation}
\begin{equation}\label{SS}
    S(x_1,x_2,v_1,v_2):=\left\{(x,t)\in\mathbb{R}^2:x=x_0+vt\text{ with }x_0\in[x_1,x_2], v\in[v_1,v_2]\right\}.
\end{equation}
    \end{theorem}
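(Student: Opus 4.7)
The plan is to synthesize the analysis of Sections 3 and 4 by inverting the chain of transformations $M \to M^{(1)} \to M^{(2)} \to M^{(3)}$ and identifying the leading contributions at $k = \infty$. I would first unwind
\begin{equation*}
M(k) = M^{(3)}(k)\, M_{RHP}^{(2)}(k)\, R^{(2)}(k)^{-1} \Delta^{-1}(k)
\end{equation*}
and let $k \to \infty$ along a direction interior to $\Omega_2 \cup \Omega_5$, where $R^{(2)} \equiv I$. Comparing the $1/k$-coefficients in the Laurent expansion of each factor gives $M_1 = M_1^{(out)} + M_1^{(3)} + \Delta_1^{-1}$, and the reconstruction formula $q = 2i\lim_{k\to\infty}(kM)_{UR}$ reduces the asymptotic question to computing $2i(M_1^{(out)})_{UR}$, up to $2i(M_1^{(3)})_{UR}$. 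The bound $|M_1^{(3)}| \lesssim t^{-3/4}$ from Section 3.4 absorbs the latter contribution into the stated error, and the diagonal block $\Delta_1^{-1}$ does not affect the UR entry.

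Next I would separate $M^{(out)}$ into its global soliton piece and its local dispersive correction. Outside the disk $\mathcal{U}_{k_0}$ the jump matrix of $M_{RHP}^{(2)}$ is exponentially close to the identity, so $M_{RHP}^{(2)} \approx M^{(out)}$; inside $\mathcal{U}_{k_0}$ one uses the local parabolic-cylinder model $M^{(SA)}$ from Section 3.6 together with the small-norm error factor $E(k)$. The parabolic-cylinder contribution to the $1/k$-coefficient is exactly $t^{-1/2} g$ with the matrix $g$ in the statement, as recorded in equation \eqref{45} via Theorem 1.1 of Geng et al., while $E(k)$ solves a small-norm RH problem whose jump is $O(t^{-1/2})$ on $\partial\mathcal{U}_{k_0}$ and therefore contributes only $O(t^{-1}) \subseteq O(t^{-3/4})$. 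Separately, the purely soliton part $q_{sol}(x,t;\sigma_d^{out})$ is reduced to $q_{sol}(x,t;\widehat{\sigma}_d(\mathcal{I}))$ via the chain of reductions in Section 3.3: the transformation by $a_\Delta^{\sigma_4}$, passage to the reflectionless data $\sigma_d^{\Delta_{k_0}^-}$, and the $\omega(k)$-deformation of poles outside $\mathcal{Z}(\mathcal{I})$ into exponentially small jumps. Since $(x,t) \in S(x_1,x_2,v_1,v_2)$ forces $k_0 = -x/(4t) \in \mathcal{I}$ for $t$ large enough, the Corollary attached to \eqref{775} yields $q_{sol}(x,t;\sigma_d^{out}) = q_{sol}(x,t;\widehat{\sigma}_d(\mathcal{I})) + O(e^{-8\mu t})$, which is absorbed in $O(t^{-3/4})$ as well. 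Combining the three contributions produces the stated formula.

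The main technical hurdle, already handled in Section 3.4, is the derivation of the sharper rate $t^{-3/4}$ for $M_1^{(3)}$: this hinges on splitting the $\bar\partial$-integrand into four pieces $I_5,\ldots,I_8$ and dispatching the singular term $|s-k_0|^{-1/2}$ via a Hölder estimate with a carefully chosen exponent $2 < p < 4$, which is precisely what distinguishes the present result from the $O(t^{-1}\log t)$ rate of the previous work. A secondary delicate point is the uniformity of all estimates as $(x,t)$ ranges over $S(x_1,x_2,v_1,v_2)$: the Gamma-function prefactor in $g$, the reflection coefficient $\gamma(k_0)$, and the $\delta^0$-phase are all evaluated at the moving stationary point $k_0 = -x/(4t)$, so the matching of the outer and local parametrices, the small-norm estimate of $E(k)$, and the bound on the $\bar\partial$-integral must each be verified uniformly in the cone. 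Once this uniformity is in hand, the theorem follows by adding \eqref{360} and the $t^{-1/2}g$ term from \eqref{45}, and then replacing $\sigma_d^{out}$ by $\widehat{\sigma}_d(\mathcal{I})$ via \eqref{775}.
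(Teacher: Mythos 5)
Your proposal follows essentially the same route as the paper: inverting the transformation chain $M=M^{(3)}M_{RHP}^{(2)}R^{(2)-1}\Delta^{-1}$ along a direction in $\Omega_2\cup\Omega_5$, reading off $M_1=M_1^{(out)}+M_1^{(3)}+\Delta_1^{-1}$, absorbing $M_1^{(3)}$ into the error via the $t^{-3/4}$ bound of Section \ref{s:3.4}, extracting the $t^{-1/2}g$ term from the parabolic-cylinder model of Section \ref{s:3.6} and equation \eqref{45}, and finally replacing $\sigma_d^{out}$ by $\widehat{\sigma}_d(\mathcal{I})$ through \eqref{775}. The decomposition, the key estimates, and the order in which the contributions are combined all match the paper's own argument in Section \ref{s.5}.
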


If the reference cone $S$ does not correspond to any of the soliton speeds, then $|\xi-\operatorname{Re}k_j|\geq c>0$ for all $(x,t)\in S$ and $j=1,\ldots,N$, $q_{sol}$ is
    each identically zero. So we have

    \begin{theorem}\label{theorem2}
      In the no-soliton case, $v_1,v_2$ are chosen in Theorem \ref{theorem1} such that $N(I)=0$, $M^{\Delta_{\mathcal{I}}}(k|\widehat{\sigma}_d(\mathcal{I}))\equiv I$,
      and $q_{sol}(x,t;\widehat{\sigma}_d(\mathcal{I}))\equiv0$, the asymptotic behavior of the solution reduces to
      \begin{equation}
    q(x,t)=\begin{pmatrix}q_1(x,t)&q_0(x,t)\\q_0(x,t)&q_{-1}(x,t)\end{pmatrix}=t^{-1/2}\frac{\sqrt{\pi}(\delta^0)^2e^{-\frac{\pi\nu}{2}}e^{\frac{-3\pi i}{4}}}
    {\Gamma(-i\nu)\det(-\gamma(k_0))}\begin{pmatrix}-\gamma_{22}(k_{0})&\gamma_{12}(k_{0})\\\gamma_{21}(k_{0})&-\gamma_{11}(k_{0})\end{pmatrix}+\mathcal{O}(t^{-3/4}),
    \end{equation}
    where $M^{\Delta_{\mathcal{I}}}(k|\widehat{\sigma}_d(\mathcal{I}))$ solves RH problem \ref{RHP1} with scattering data $\widehat{\sigma}_d(\mathcal{I})$.
    \end{theorem}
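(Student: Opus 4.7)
The plan is to derive Theorem \ref{theorem2} as a direct corollary of Theorem \ref{theorem1} by tracing through what happens when the cone $S(x_1,x_2,v_1,v_2)$ contains none of the soliton velocities. First I would unpack the hypothesis $N(\mathcal{I})=0$: by the definition of $\mathcal{Z}(\mathcal{I})=\{k_j\in\mathcal{Z}:\operatorname{Re}k_j\in\mathcal{I}\}$, the condition $N(\mathcal{I})=0$ is equivalent to $\mathcal{Z}(\mathcal{I})=\emptyset$, which means every discrete eigenvalue $k_j$ satisfies $\operatorname{dist}(\operatorname{Re}k_j,\mathcal{I})>0$. Consequently the modulated scattering data $\widehat{\sigma}_d(\mathcal{I})=\{(k_j,f^{\pm}(\mathcal{I})):k_j\in\mathcal{Z}(\mathcal{I})\}$ is empty.

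Next I would examine the reflectionless RH problem satisfied by $M^{\Delta_{\mathcal{I}}}(k\mid\widehat{\sigma}_d(\mathcal{I}))$. With $\widehat{\sigma}_d(\mathcal{I})=\emptyset$, the problem has no jump contour and no residue conditions, only the normalization $M^{\Delta_{\mathcal{I}}}\to I$ as $k\to\infty$. By the uniqueness argument already invoked for the reflectionless RH problem (the Liouville-theorem-based regularization used in the proof accompanying RH problem \ref{RHP8}), any matrix function that is entire, tends to $I$ at infinity, and has no singularities must be identically $I$. Hence $M^{\Delta_{\mathcal{I}}}(k\mid\widehat{\sigma}_d(\mathcal{I}))\equiv I$, and by the reconstruction formula $q_{sol}(x,t;\widehat{\sigma}_d(\mathcal{I}))=2i\lim_{k\to\infty}(kM^{\Delta_{\mathcal{I}}})_{UR}\equiv 0$.

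With these two identifications in hand, I would apply Theorem \ref{theorem1} verbatim: the conclusion
\begin{equation}\nonumber
q(x,t)=q_{sol}(x,t;\widehat{\sigma}_d(\mathcal{I}))+t^{-1/2}g+\mathcal{O}(t^{-3/4})
\end{equation}
collapses, after substituting $q_{sol}\equiv 0$ and inserting the explicit form of $g$ from the statement of Theorem \ref{theorem1}, to exactly the stated asymptotic formula of Theorem \ref{theorem2}. No new estimates are needed, since the error bound $\mathcal{O}(t^{-3/4})$ has already been propagated through the $\bar\partial$-analysis in Sections \ref{s:3.4}--\ref{s:3.6} uniformly in the cone $S$.

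The only subtle point, and thus the closest thing to an obstacle, is confirming that the derivation leading to Theorem \ref{theorem1} remains valid when $\mathcal{Z}(\mathcal{I})=\emptyset$, i.e.\ that the cone-based decomposition into ``inside'' and ``outside'' solitons does not degenerate. This is handled by noting that the estimates $\|N_j^{\Delta_{\mathcal{I}}^{\pm}}\|=\mathcal{O}(e^{-8\mu t})$ used to shrink the discrete spectrum onto the cone apply unconditionally to every $k_j\in\mathcal{Z}\setminus\mathcal{Z}(\mathcal{I})=\mathcal{Z}$, so the small-norm RH problem for $\mathcal{E}(k)$ still produces the desired $I+\mathcal{O}(e^{-8\mu t})$ approximation with $M^{\Delta_{\mathcal{I}}}\equiv I$ as the leading model. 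After this verification, Theorem \ref{theorem2} follows immediately.
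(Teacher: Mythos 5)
Your proposal is correct and follows essentially the same route as the paper, which treats Theorem \ref{theorem2} as an immediate corollary of Theorem \ref{theorem1}: when the cone contains no soliton velocities, $\mathcal{Z}(\mathcal{I})=\emptyset$, the reflectionless outer model collapses to the identity by Liouville, $q_{sol}\equiv 0$, and the formula of Theorem \ref{theorem1} reduces to the pure-radiation term. Your write-up is in fact more explicit than the paper's (which disposes of this in one sentence), and your check that the exponential estimates on $\mathcal{Z}\setminus\mathcal{Z}(\mathcal{I})=\mathcal{Z}$ still apply is a worthwhile addition.
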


\section*{Acknowledgments}\label{s:10} 	
    This work was supported by the National Natural Science Foundation of China under Grant No.  12371255, Xuzhou Basic Research Program Project under
    Grant No. KC23048,  the 333 Project in Jiangsu Province, the Fundamental Research Funds for the Central Universities of CUMT under Grant No. 2024ZDPYJQ1003.\\

  \textbf{Data availibility}: The data which supports the findings of this study is available within the article.\\

  \textbf{Declarations}\\

\textbf{Conflict of interest}: The authors declare no conflict of interest.

\appendix{}

\section{Model problem}\label{appendix}

    In this section, we focus on $M_1^0$ and $M^0(k;x,t)$. The $M^0(k;x,t)$ is the solution of the RH problem
    \begin{equation}\label{373}
    \begin{cases}M_+^0(k;x,t)=M_-^0(k;x,t)J^0(k;x,t),&k\in\Sigma_0,\\M^0(k;x,t)\to I_{4\times4},&k\to\infty,\end{cases}
\end{equation}
    $J^0=(b_-^0)^{-1}b_+^0=(I_{4\times4}-\omega_-^0)^{-1}(I_{4\times4}+\omega_+^0)$. In particular, we have
    \begin{equation}\nonumber
    M^0(k)=I_{4\times4}+\frac{M_1^0}k+\mathcal{O}(k^{-2}),\quad k\to\infty,
\end{equation}
    \begin{equation}\label{103}
    q(x,t)=\frac i{\sqrt{2t}}\left(M_1^0\right)_{UR}+\mathcal{O}\left(\frac{\log t}t\right).
\end{equation}

    The RH problem can be transformed into a model problem, and an explicit expression for $M_1^0$ can be obtained through the standard parabolic
    cylinder function. For this purpose, we introduce
    \begin{equation}\nonumber
    \Psi(k)=H(k)k^{i\nu\sigma_4}e^{-\frac14ik^2\sigma_4},\quad H(k)=(\delta^0)^{-\sigma_4}M^0(k)(\delta^0)^{\sigma_4},
\end{equation}
where $\delta^0=e^{2itk_0^2}(8t)^{-\frac{iv}{2}}e^{\chi(k_0)}$.

    It is easy to see from \eqref{373} that
    \begin{equation}\label{104}
    \Psi_+(k)=\Psi_-(k)v(k_0),\quad v=e^{\frac14ik^2\sigma_4}k^{-i\nu\sigma_4}(\delta^0)^{-\sigma_4}J^0(k)(\delta^0)^{\sigma_4}k^{i\nu\sigma_4}e^{-\frac14ik^2\sigma_4}.
\end{equation}
    For $k\in\Sigma_0^1,\Sigma_0^2,\Sigma_0^3,\Sigma_0^4$, the jump matrix is independent of k, so
    \begin{equation}\label{105}
    \frac{\mathrm{d}\Psi_+(k)}{\mathrm{d}k}=\frac{\mathrm{d}\Psi_-(k)}{\mathrm{d}k}v(k_0).
\end{equation}
    By \eqref{104} and \eqref{105}, we obtain
    \begin{equation}\nonumber
    \frac{\mathrm{d}\Psi_+(k)}{\mathrm{d}k}+\frac12ik\sigma_4\Psi_+(k)=\left(\frac{\mathrm{d}\Psi_-(k)}{\mathrm{d}k}+\frac12ik\sigma_4\Psi_-(k)\right)v(k_0).
\end{equation}
    Then $(\mathrm{d}\Psi/\mathrm{d}k+\frac12ik\sigma_4\Psi)\Psi^{-1}$ has no jump discontinuity along each of the four rays. We have
    \begin{equation}\nonumber
    \begin{aligned}
\left(\frac{\mathrm{d}\Psi(k)}{\mathrm{d}k}+\frac12ik\sigma_4\Psi(k)\right)\Psi^{-1}(k)& =\frac{\mathrm{d}H(k)}{\mathrm{d}k}H^{-1}(k)-\frac{ik}{2}H(k)\sigma_4H^{-1}(k) \\
&+\frac{i\nu}kH(k)\sigma_4H^{-1}(k)+\frac12ik\sigma_4 \\
&=\mathcal{O}(k^{-1})+\frac i2(\delta^0)^{-\sigma_4}[\sigma_4,M_1^0](\delta^0)^{\sigma_4}.
\end{aligned}
\end{equation}
    By the $\text{Liouville's}$ Theorem we can get
    \begin{equation}\label{109}
    \frac{\mathrm{d}\Psi(k)}{\mathrm{d}k}+\frac12ik\sigma_4\Psi(k)=\beta\Psi(k),
\end{equation}
    where
    \begin{equation}\nonumber
    \beta=\frac i2(\delta^0)^{-\sigma_4}[\sigma_4,M_1^0](\delta^0)^{\sigma_4}=\begin{pmatrix}0&\beta_{12}\\\beta_{21}&0\end{pmatrix}.
\end{equation}
    Moreover,
    \begin{equation}\label{1011}
    (M_1^0)_{12}=-i(\delta^0)^2\beta_{12}.
\end{equation}
    The RH problem \ref{373} shows that
    \begin{equation}\nonumber
    \sigma_4(M^0(\bar{k}))^\dagger\sigma_4=(M^0(k))^{-1},
\end{equation}
    which implies that $\beta_{12}=\beta_{21}^\dagger $. Set
    \begin{equation}\nonumber
    \Psi(k)=\begin{pmatrix}\Psi_{11}(k)&\Psi_{12}(k)\\\Psi_{21}(k)&\Psi_{22}(k)\end{pmatrix},
\end{equation}
    $\Psi_{ij}(k)(i,j=1,2)$ are all $2\times2$ matrices. From \eqref{109} and its differential we obtain
    \begin{equation}\label{1013}
    \frac{\mathrm{d}^2\Psi_{11}(k)}{\mathrm{d}k^2}+\left[(\frac12i+\frac14k^2)I_{2\times2}-\beta_{12}\beta_{21}\right]\Psi_{11}(k)=0,
\end{equation}
    \begin{equation}\nonumber
    \beta_{12}\Psi_{21}(k)=\frac{\mathrm{d}\Psi_{11}(k)}{\mathrm{d}k}+\frac12ik\Psi_{11}(k),
\end{equation}
    \begin{equation}\nonumber
    \frac{\mathrm{d}^2\beta_{12}\Psi_{22}(k)}{\mathrm{d}k^2}+\left[(-\frac12i+\frac14k^2)I_{2\times2}-\beta_{12}\beta_{21}\right]\beta_{12}\Psi_{22}(k)=0,
\end{equation}
    \begin{equation}\nonumber
    \Psi_{12}(k)=(\beta_{12}\beta_{21})^{-1}\left(\frac{\mathrm{d}\beta_{12}\Psi_{22}(k)}{\mathrm{d}k}-\frac12ik\beta_{12}\Psi_{22}(k)\right).
\end{equation}
    For the convenience, we assume that the $2\times2$ matrices $\beta_{12}$ and $\beta_{12}\beta_{21}$ have the forms
    \begin{equation}\nonumber
    \beta_{12}=\begin{pmatrix}A&B\\C&D\end{pmatrix},\quad\beta_{12}\beta_{21}=\begin{pmatrix}\tilde{A}&\tilde{B}\\\tilde{C}&\tilde{D}\end{pmatrix}.
\end{equation}
    Set $\Psi_{11} = (\Psi_{11}^{(ij)})_{2\times2}.$ We consider that the $(1,1)$ and $(2,1)$ terms of equation \eqref{1013}
    \begin{equation}\label{1018}
    \frac{\mathrm{d}^2\Psi_{11}^{(11)}(k)}{\mathrm{d}k^2}+(\frac12i+\frac14k^2)\Psi_{11}^{(11)}(k)-\tilde{A}\Psi_{11}^{(11)}(k)-\tilde{B}\Psi_{11}^{(21)}(k)=0,
\end{equation}
    \begin{equation}\nonumber
    \frac{\mathrm{d}^2\Psi_{11}^{(21)}(k)}{\mathrm{d}k^2}+(\frac12i+\frac14k^2)\Psi_{11}^{(21)}(k)-\tilde{C}\Psi_{11}^{(11)}(k)-\tilde{D}\Psi_{11}^{(21)}(k)=0.
\end{equation}
    If $s$ satisfies $\tilde{B}\tilde{C}=(s-\tilde D)(s-\tilde{A})$, then \eqref{1018} becomes
    \begin{equation}\nonumber
    \frac{\mathrm{d}^2}{\mathrm{d}k^2}[\tilde{C}\Psi_{11}^{(11)}(k)+(s-\tilde{A})\Psi_{11}^{(21)}(k)]+(\frac12i+\frac14k^2-s)[\tilde{C}\Psi_{11}^{(11)}(k)+(s-\tilde{A})\Psi_{11}^{(21)}(k)]=0.
\end{equation}
    Obviously, we can transform the above equation into the $\text{Weber's}$ equation through a simple variable transformation. As is well known,
    the standard parabolic cylinder functions $D_{a}(\zeta)$ and $D_{a}(-\zeta)$ constitute the fundamental solution set of the $\text{Weber's}$ equation
    \begin{equation}\nonumber
    \frac{\mathrm{d}^2g(\zeta)}{\mathrm{d}\zeta^2}+\left(a+\frac12-\frac{\zeta^2}4\right)g(\zeta)=0,\nonumber
\end{equation}
    whose general solution representation
    \begin{equation}\nonumber
    g(\zeta)=C_1D_a(\zeta)+C_2D_a(-\zeta),\nonumber
\end{equation}
    where $C_{1}$ and $C_{2}$ are two arbitrary constants. Set $a=is$,
    \begin{equation}\label{1021}
    \tilde{C}\Psi_{11}^{(11)}(k)+(s-\tilde{A})\Psi_{11}^{(21)}(k)=c_1D_a(e^{\frac{\pi i}{4}}k)+c_2D_a(e^{-\frac{3\pi i}{4}}k),
\end{equation}
    where $c_{1}$ and $c_{2}$ are constants. First, the solution $c_1D_a(e^\frac{\pi i}4k)+c_2D_a(e^\frac{3\pi i}4k)$ is nontrivial, otherwise the
    large $k$ expansion of $\Psi(k)$ is false. Besides, notice that as $k\to\infty $,
    \begin{equation}\label{1022}
    \Psi_{11}(k)\to k^{i\nu}e^{-\frac14ik^2}I_{2\times2}.
\end{equation}
    where the parabolic-cylinder function $D_{a}(\zeta)$ has a asymptotic expansion
    \begin{equation}\label{1023}
    \begin{aligned}&D_{a}(\zeta)=\begin{cases}\zeta^ae^{-\frac{\zeta^2}{4}}(1+O(\zeta^{-2})),&|\arg\zeta|<\frac{3\pi}{4},\\
    \zeta^ae^{-\frac{\zeta^2}{4}}(1+O(\zeta^{-2}))-\frac{\sqrt{2\pi}}{\Gamma(-a)}e^{a\pi i+\frac{\zeta^2}{4}}\zeta^{-a-1}(1+O(\zeta^{-2})),
    &\frac{\pi}{4}<\arg\zeta<\frac{5\pi}{4},\\\zeta^ae^{-\frac{\zeta^2}{4}}(1+O(\zeta^{-2}))-
    \frac{\sqrt{2\pi}}{\Gamma(-a)}e^{-a\pi i+\frac{\zeta^2}{4}}\zeta^{-a-1}(1+O(\zeta^{-2})),&-\frac{5\pi}{4}<\arg\zeta<-\frac{\pi}{4},\end{cases}\end{aligned}
\end{equation}
    as $\zeta\to\infty $,  where $\Gamma(\cdot)$ is the Gamma function. Calculate by substituting \eqref{1022} and \eqref{1023} into \label{1021},
    we get that $c_1=\tilde{C}k^{i\nu-a}e^{\frac{-a\pi i}4}$ and $c_{2}=0$. Meanwhile, $\Psi_{11}^{(11)}(k)$ and $\Psi_{11}^{(21)}(k)$ satisfy
    asymptotic expansion \eqref{1022} and are therefore not linearly correlated. From equation \label{1021}, it can be seen that the coefficient
    of $\Psi_{11}^{(21)}(k)$ is unique, that is, $s$ is unique. Based on the definition of $s$, we obtain $\tilde{B}=\tilde{C}=0$. Therefore, we
    assume that $\beta_{12}\beta_{21}=\mathrm{diag}(d_1,d_2)$ and \eqref{1013} becomes
    \begin{equation}\nonumber
    \frac{\mathrm{d}^2}{\mathrm{d}k^2}\begin{pmatrix}\Psi_{11}^{(11)}&\Psi_{11}^{(12)}\\\Psi_{11}^{(21)}&\Psi_{11}^{(22)}\end{pmatrix}+(\frac12i+\frac14k^2)
    \begin{pmatrix}\Psi_{11}^{(11)}&\Psi_{11}^{(12)}\\\Psi_{11}^{(21)}&\Psi_{11}^{(22)}\end{pmatrix}
    -\begin{pmatrix}d_1\Psi_{11}^{(11)}&d_1\Psi_{11}^{(12)}\\d_2\Psi_{11}^{(21)}&d_2\Psi_{11}^{(22)}\end{pmatrix}=0.
\end{equation}
    It can be seen that $\Psi_{11}^{(11)}(k)$, $\Psi_{11}^{(21)}(k)$, $\Psi_{11}^{(12)}(k)$ and $\Psi_{11}^{(22)}(k)$ satisfy the same equation,
    respectively. Set $\tilde{a}=id_1$, similar to \label{1021}, $\Psi_{11}^{(12)}(k)$ can be expressed as a linear combination of
    $D_{\tilde{a}}(e^{\frac{\pi i}4}k)$ and $D_{\tilde{a}}(e^{-\frac{3\pi i}4}k)$. Notice that $\Psi_{11}^{(12)}\to0$ as $k\to\infty $ and the
    asymptotic expansion \eqref{1023}, $\Psi_{11}^{(12)}=0$. A similar computation shows that $\Psi_{11}^{(12)}=0$. Then $\Psi_{11}=0$ is a diagonal
    matrix and set $a_{1}=id_{1},a_{2}=id_{2}$, we have
    \begin{equation}\nonumber
    \Psi_{11}^{(11)}=c_1^{(1)}D_{a_1}(e^{\frac{\pi i}4}k)+c_2^{(1)}D_{a_1}(e^{-\frac{3\pi i}4}k),
\end{equation}
    \begin{equation}\nonumber
    \Psi_{11}^{(22)}=c_1^{(2)}D_{a_2}(e^{\frac{\pi i}4}k)+c_2^{(2)}D_{a_2}(e^{-\frac{3\pi i}4}k),
\end{equation}
    among them, $c_1^{(j)},c_2^{(j)}(j=1,2)$ are constants. Similar analysis can be applied to $\Psi_{22}(k)$, and we have
    \begin{equation}\nonumber
    A\Psi_{22}^{(11)}=c_1^{(3)}D_{-a_1}(e^{-\frac{\pi i}{4}}k)+c_2^{(3)}D_{-a_1}(e^{\frac{3\pi i}{4}}k),
\end{equation}
    \begin{equation}\nonumber
    D\Psi_{22}^{(22)}=c_1^{(4)}D_{-a_2}(e^{-\frac{\pi i}4}k)+c_2^{(4)}D_{-a_2}(e^{\frac{3\pi i}4}k),
\end{equation}
    among them, $c_{1}^{(j)},c_{2}^{(j)}(j=3,4)$ are constants. Next, we first consider the case when $\arg k\in (-\frac\pi4,\frac\pi4)$. Note that when $k\to\infty $,
    \begin{equation}\nonumber
    \Psi_{11}(k)k^{-i\nu}e^{\frac{ik^2}4}\to I_{2\times2},\quad\Psi_{22}(k)k^{i\nu}e^{-\frac{ik^2}4}\to I_{2\times2}.\nonumber
\end{equation}
    Then we have
    \begin{equation}\nonumber
    \begin{aligned}&\Psi_{11}^{(11)}(k)=\Psi_{11}^{(22)}(k)=e^{\frac{\pi\nu}{4}}D_{a_{1}}(e^{\frac{\pi i}{4}}k),\quad a_{1}=a_{2}=i\nu,\\
    &\Psi_{22}^{(11)}(k)=\Psi_{22}^{(22)}(k)=e^{\frac{\pi\nu}{4}}D_{-a_{1}}(e^{-\frac{\pi i}{4}}k).\end{aligned}
\end{equation}
    In addition, the parabolic cylinder function can be obtained
    \begin{equation}\nonumber
    \frac{\mathrm{d}D_a(\zeta)}{\mathrm{d}\zeta}+\frac\zeta2D_a(\zeta)-aD_{a-1}(\zeta)=0.
\end{equation}
    Then we have
    \begin{equation}\nonumber
    \Psi_{21}(k)=\beta_{12}^{-1}a_1e^{\frac{\pi\nu}{4}}e^{\frac{\pi i}{4}}D_{a_1-1}(e^{\frac{\pi i}{4}}k).\nonumber
\end{equation}
    For $\arg k\in(\frac\pi4,\frac{3\pi}4)$ and $k\to\infty $,
    \begin{equation}\nonumber
    \Psi_{11}(k)k^{-i\nu}e^{\frac{ik^2}4}\to I_{2\times2},\quad\Psi_{22}(k)k^{i\nu}e^{-\frac{ik^2}4}\to I_{2\times2}.\nonumber
\end{equation}
    We get
    \begin{equation}\nonumber
    \begin{aligned}&\Psi_{11}^{(11)}(k)=\Psi_{11}^{(22)}(k)=e^{-\frac{3\pi\nu}4}D_{a_1}(e^{-\frac{3\pi i}4}k),\quad a_1=a_2=i\nu,\\
    &\Psi_{22}^{(11)}(k)=\Psi_{22}^{(22)}(k)=e^{\frac{\pi\nu}4}D_{-a_1}(e^{-\frac{\pi i}4}k),\end{aligned}
\end{equation}
    which imply
    \begin{equation}\nonumber
    \Psi_{21}(k)=\beta_{12}^{-1}a_1e^{-\frac{3\pi\nu}4}e^{-\frac{3\pi i}4}D_{a_1-1}(e^{-\frac{3\pi i}4}k).\nonumber
\end{equation}
    Along the ray $\arg k=\frac\pi4$, we can infer
    \begin{equation}\nonumber
    \Psi_+(k)=\Psi_-(k)\begin{pmatrix}I_{2\times2}&0\\-\gamma(\bar{k}_0)&I_{2\times2}\end{pmatrix}.
\end{equation}
    Notice the $(2,1)$ entry of the RH problem,
    \begin{equation}\nonumber
    \beta_{12}^{-1}a_1e^{\frac{\pi(i+\nu)}4}D_{a_1-1}(e^{\frac{\pi i}4}k)=
    e^{\frac{\pi\nu}4}D_{-a_1}(e^{\frac{3\pi i}4}k)-\gamma(\bar{k}_0)+\beta_{12}^{-1}a_1e^{-\frac{\pi(3\nu+3i)}4}D_{a_1-1}(e^{-\frac{3\pi i}4}k).
\end{equation}
    The parabolic-cylinder function satisfies
    \begin{equation}\nonumber
    D_a(\zeta)=\frac{\Gamma(a+1)}{\sqrt{2\pi}}\left(e^{\frac12a\pi i}D_{-a-1}(i\zeta)+e^{-\frac12a\pi i}D_{-a-1}(-i\zeta)\right).
\end{equation}
    We can decompose $D_{-a_1}(e^{\frac{3\pi i}4}k)$ into $D_{a_1-1}(e^{\frac{\pi i}4}k)$ and $D_{a_1-1}(e^{\frac{\pi i}4}k)$. By separating
     the coefficients of the two independent functions
    \begin{equation}\label{1033}
    \left.\beta_{12}=\frac{\nu\sqrt{2\pi}e^{-\frac{\pi\nu}{2}}e^{\frac{3\pi i}{4}}}{\Gamma(-i\nu+1)\det(-\gamma({\bar{k}_0}))}
    \left(\begin{array}{cc}-\gamma_{22}(\bar{k}_0)&\gamma_{21}^*(\bar{k}_0)\\\gamma_{12}^*(\bar{k}_0)&-\gamma_{11}^*(\bar{k}_0)\end{array}\right.\right).
\end{equation}

	\bibliographystyle{plain}

\end{document}